\numberwithin{equation}{section}
\newtheorem{thm}{Theorem}[section]
\newtheorem{prop}[thm]{Proposition}
\newtheorem{cor}[thm]{Corollary}
\newtheorem{lem}[thm]{Lemma}
\theoremstyle{remark}
\newtheorem{rmk}[thm]{Remark}
\theoremstyle{definition}
\newtheorem{defn}{Definition}[section]
\DeclareMathOperator{\N}{\mathbb{N}}
\DeclareMathOperator{\R}{\mathbb{R}}
\DeclareMathOperator{\cY}{\mathcal{Y}}
\DeclareMathOperator{\cX}{\mathcal{X}}
\DeclareMathOperator{\cI}{\mathcal{I}}
\DeclareMathOperator{\cA}{\mathcal{A}}
\DeclareMathOperator{\cK}{\mathcal{K}}
\DeclareMathOperator{\cL}{\mathcal{L}}
\DeclareMathOperator{\cH}{\mathcal{H}}
\DeclareMathOperator{\cB}{\mathcal{B}}
\DeclareMathOperator{\cS}{\mathcal{S}}
\DeclareMathOperator{\fE}{\mathfrak{E}}
\DeclareMathOperator{\fD}{\mathfrak{D}}
\DeclareMathOperator{\fG}{\mathfrak{G}}
\DeclareMathOperator{\bP}{\mathbb{P}}
\DeclareMathOperator{\divg}{div}
\newcommand{\der}[2]{\frac{d #1}{d #2}}
\newcommand{\Norm}[2]{\left\Vert #1 \right\Vert_{#2}}
\title[Stability of a Riesz-type inequality]{A spherical rearrangement proof of the stability of a Riesz-type inequality and an application to an isoperimetric type problem.}
\author{Giacomo Ascione}
\thanks{The author is supported by MIUR-PRIN 2017, project Stochastic Models for Complex Systems, no. 2017JFFHSH and by Gruppo Nazionale per l’Analisi Matematica, la Probabilit\'a e le loro Applicazioni (GNAMPA-INdAM)}
\address{Dipartimento di Matematica e Applicazioni ``Renato Caccioppoli'', Università degli Studi di Napoli Federico II, 80126 Napoli, Italy}
\email{giacomo.ascione@unina.it}
\subjclass[2020]{49K40; 49J40}
\begin{document}
	\keywords{Riesz rearrangement inequality; fractional perimeter; Riesz potential; quantitative isoperimetric inequality}
	\maketitle
\begin{abstract}
	We prove the stability of the ball as global minimizer of an attractive shape functional under volume constraint, by means of mass transportation arguments. The stability exponent is $1/2$ and it is sharp. Moreover, we use such stability result together with the quantitative (possibly fractional) isoperimetric inequality to prove that the ball is a global minimizer of a shape functional involving both an attractive and a repulsive term with a sufficiently large fixed volume and with a suitable (possibly fractional) perimeter penalization.
\end{abstract}
\section{Introduction}
In recent times different works focused on shape functionals of the form
\begin{equation*}
\fE(E)=P(E)+V_{\alpha}(E),
\end{equation*}
where $P$ is the (de Giorgi) perimeter and
\begin{equation}\label{Rieszpot}
V_\alpha(E)=\int_{E}\int_{E}\frac{1}{|x-y|^{N-\alpha}}dxdy
\end{equation}
is the Riesz potential with $\alpha \in (0,N)$. The most famous case is given by $N=3$ and $\alpha=2$. It is linked with Gamow's liquid drop model (see \cite{gamow1930mass}) for the stability of atomic nuclei (see \cite{choksi2017old} for a review on the problem). 
Such problem has been studied independently in \cite{knupfer2013isoperimetric,muratov2014isoperimetric} up to dimension $7$ and in \cite{julin2014isoperimetric} for any dimension and a Coulombic potential (i.e. $\alpha=1$). In all those papers, the authors prove that, up to a critical volume, the ball is the minimizer of the mixed energy described by Gamow's liquid drop model. In \cite{bonacini2014local} the same result is obtained for general Riesz potential in any dimension. On the other hand, a non-existence result in the Coulombian case in dimension $3$ has been shown in \cite{knupfer2013isoperimetric,muratov2014isoperimetric,lu2014nonexistence} for sufficiently big volumes. Let us recall that in \cite{bonacini2014local} it is shown that there exists a critical volume $m_1$ such that the ball is the unique minimizer of the mixed energy for $|E|<m_1$ and is not a minimizer for $|E|>m_1$, while in \cite{lu2014nonexistence} another critical volume $m_2$ is found such that $\fE$ does not admit a minimizer for $m>m_2$. From now on, let us denote by $m_1$ and $m_2$ the optimal constants such that for $|E|<m_1$ the minimizer is a ball and for $|E|>m_2$ the minimizer does not exist. In \cite{choksi2011small} the two constants $m_1$ and $m_2$ are conjectured to be equal. Up to now, it has been shown in \cite{frank2015compactness} that, as $m \le m_2$, a minimizer still exists (but it is not proved to be a ball). The best known lower bound on $m_2$ has been achieved in \cite{frank2016nonexistence}. The non-existence results have been generalized for different values of $\alpha \in (0,N)$. As far as we know, the best bound on $\alpha$ for which such result holds has been provided in \cite{frank2021nonexistence}.\\
In \cite{figalli2015isoperimetry}, the isoperimetric inequality for fractional perimeter has been used to prove the existence of a minimizer as $|E|<m_2$, with some critical volume $m_2>0$, for the functional
\begin{equation*}
\fE(E)=P_s(E)+V_\alpha(E),
\end{equation*}
where, for $s \in (0,1)$,
\begin{equation}\label{fracper}
P_s(E)=\frac{1-s}{\omega_{N-1}}\int_{E}\int_{\R^N\setminus E}\frac{1}{|x-y|^{N+s}}dxdy
\end{equation}
is the fractional perimeter and $\alpha \in (0,N)$, while, for $s=1$, $P_1:=P$ is the classical perimeter. Moreover, the authors prove that there exists a critical volume $m_1$ such that the unique minimizer of $\fE(E)$ is a ball if $|E|<m_1$. We remark that their results provide uniform bounds for $s$ varying in a compact subset of $(0,1]$. Let us also recall that some variations of the liquid drop problem in presence of an attractive term have been also considered in \cite{lu2015isoperimetric,la2019isoperimetric,frank2018ionization}.\\

Another similar problem, with both an attractive and a repulsive term, is given by the spherical flocking model. Such problem consists in the minimization of the mixed energy
\begin{equation*}
\fE(E)=\fG_\beta(E)+V_\alpha(E),
\end{equation*}
where
\begin{equation}\label{Gbeta}
\fG_\beta(E)=\int_{E}\int_{E}|x-y|^\beta dxdy
\end{equation}
for $\beta>0$, with fixed volume $|E|=m>0$. A first approach to such problem, for $\beta=1$, is given in \cite{burchard2015nonlocal}. As for the liquid drop model, the energy functional in this case presents a repulsive term $V_\alpha$ which is maximized by the ball and an attractive term $\fG_\beta$ that is minimized by the ball (by Riesz's rearrangement inequality, see also \cite{pfiefer1990maximum} for more general inequalities of such type). The interaction between attractive and repulsive potentials has been widely studied. The existence of a global minimizer for the energy $\fE$ when it is extended to $L^1$ functions has been proved in \cite{choksi2015minimizers}. On the other hand, in \cite{burchard2015nonlocal}, the existence of a minimizer is shown for $|E|>m_1$ where $m_1$ is a certain critical volume, while it is also shown that if $|E|<m_0$, for a certain critical volume $m_0>0$, $\fE$ does not admit any minimizer. Finally, in \cite{frank2019proof}, it has been shown that if $|E|>m_2$, for a certain critical mass $m_2>0$, then the ball is a global minimizer of $\fE$ for $\alpha \in (1,N)$, while the ball is not even a critical point if $\alpha \in (0,1]$. Last result is proved by using Christ's theorem \cite{christ2017sharpened} to achieve quantitative versions of the inequalities involving the Riesz potential $V_\alpha$ and the shape functional $\fG_\beta$. \\

Another strategy to prove a quantitative version of the isoperimetric inequality involving the Riesz potential $V_\alpha$ has been exploited in \cite{fusco2019sharp}. Such strategy is made up of two main steps:
\begin{itemize}
	\item Proving the result for nearly-spherical sets (this part is common with \cite{frank2019proof} and has been exploited in \cite{frank2019note});
	\item Using mass transportation arguments to show that, for any measurable set $E$, $V_\alpha(E)$ can be increased by \textit{transporting} it into a nearly spherical set $E_*$ with the same mass.
\end{itemize}
In this paper we want to prove, by using a similar strategy, a quantitative version of the isoperimetric inequality concerning $\fG_\beta$:
\begin{equation}\label{isopineq}
\fG_\beta(E)\ge \fG_\beta(B[|E|]),
\end{equation}
where $B[|E|]$ is a ball with the same volume as $E$. In particular, for a measurable set $E$ with $|E|=\omega_N$, denoting the Fraenkel asymmetry by $$\delta(E)=\min_{x \in \R^N}|E \Delta B(x)|,$$ where $B(x)$ is the unit ball centered in $x$, and the deficit by $$\fD_\beta(E)=\fG_\beta(E)-\fG_\beta(B),$$
where $B$ is a unit ball, we propose a different proof of the following result (that is \cite[Theorem $5$]{frank2019proof}):
\begin{thm}\label{thm:quantrandisop}
	For any $N \ge 2$ and $\beta>0$ there exists a constant $C(N,\beta)>0$ such that for any measurable set $E \subseteq \R^N$ with $|E|=\omega_N$ it holds
	\begin{equation}\label{quantisop}
	\delta(E)\le C(N,\beta)\sqrt{\fD_\beta(E)}.
	\end{equation}
\end{thm}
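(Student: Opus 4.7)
The strategy adapts the two-step scheme of \cite{fusco2019sharp} recalled above. First I would prove \eqref{quantisop} for nearly spherical sets via a Fuglede-type second variation computation, then reduce a general $E$ to this case by a spherical-cap rearrangement that decreases $\fG_\beta$ while preserving volume and retaining a definite fraction of the Fraenkel asymmetry.

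\textbf{Step 1: nearly spherical sets.} Parametrize $\partial E=\{(1+u(\omega))\omega:\omega\in\partial B\}$ with $\Norm{u}{C^1(\partial B)}$ small and with the barycenter of $E$ at the origin. The volume constraint $|E|=\omega_N$ forces $\int_{\partial B}u\,d\mathcal{H}^{N-1}=O(\Norm{u}{L^2(\partial B)}^2)$. Taylor-expanding about the unit ball,
\begin{equation*}
\fG_\beta(E)-\fG_\beta(B)=L(u)+\mathcal{Q}_\beta(u)+o(\Norm{u}{L^2(\partial B)}^2),
\end{equation*}
the linear part $L(u)=\lambda\int_{\partial B}u\,d\mathcal{H}^{N-1}$ is absorbed by the volume constraint (the ball is a constrained critical point of $\fG_\beta$), while the symmetric quadratic form $\mathcal{Q}_\beta$ -- whose kernel is built from $|x-y|^\beta$ restricted to $\partial B\times\partial B$ -- can be diagonalized in spherical harmonics. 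Using the volume and barycenter constraints to kill the $\ell=0,1$ eigenspaces, one obtains a coercivity estimate $\mathcal{Q}_\beta(u)\ge c(N,\beta)\Norm{u}{L^2(\partial B)}^2$. Since $\delta(E)\le C\Norm{u}{L^1(\partial B)}\le C'\Norm{u}{L^2(\partial B)}$, this yields \eqref{quantisop} in the nearly spherical regime.

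\textbf{Step 2: reduction to the nearly spherical case.} A compactness/Riesz-rearrangement argument (applied to the radially \emph{decreasing} kernel $(2R)^\beta-|x-y|^\beta$ after a preliminary truncation of $E$ inside a large ball $B_R$) ensures that $\delta(E)\ge \eta$ implies $\fD_\beta(E)\ge c(N,\beta,\eta)$, so we may restrict to the regime of $\delta(E)$ arbitrarily small. In this regime I construct $E_*$ by \emph{spherical cap symmetrization} about an axis $\nu\in\partial B$ aligned with the direction realizing $\delta(E)$: for every $r>0$ replace $E\cap\partial B_r$ by the geodesic spherical cap on $\partial B_r$ of equal $\mathcal{H}^{N-1}$-area centered on $\nu$. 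The set $E_*$ is star-shaped, axially symmetric, of the same volume as $E$, and -- once $\delta(E)$ is small enough -- nearly spherical in the sense of Step 1. Realizing spherical cap symmetrization as an iterated limit of Steiner symmetrizations in hyperplanes containing $\nu$, each step decreases $\fG_\beta$ by the one-dimensional Riesz rearrangement inequality applied (after the same truncation trick) to the kernel $|x-y|^\beta$, giving $\fG_\beta(E_*)\le \fG_\beta(E)$. The axial choice of $\nu$ provides $\delta(E_*)\ge c\,\delta(E)$, so applying Step 1 to $E_*$ closes the argument:
\begin{equation*}
\delta(E)\le C\,\delta(E_*)\le C'\sqrt{\fD_\beta(E_*)}\le C'\sqrt{\fD_\beta(E)}.
\end{equation*}

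\textbf{Main obstacle.} The delicate step is the second one. The required monotonicity $\fG_\beta(E_*)\le \fG_\beta(E)$ runs in the opposite direction to the standard Riesz rearrangement -- since $|x-y|^\beta$ is radially \emph{increasing} rather than decreasing -- so I need a careful truncation/sign-change and a uniform passage to the limit as the truncation radius diverges. Equally subtle is the quantitative lower bound $\delta(E_*)\ge c\,\delta(E)$: it hinges on being able to choose the rearrangement axis $\nu$ essentially aligned with the Fraenkel-optimal direction for $E$, and this alignment requires a separate continuity/compactness argument in the small-deficit regime.
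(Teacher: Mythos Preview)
Your Step~1 matches the paper's Section~4, though you underestimate one genuine difficulty: the eigenvalues $\lambda_{k,\beta}$ of the relevant spherical convolution operator are \emph{not} monotone in $k$ when $\beta>0$ (in contrast to the Riesz case $\beta<0$ used in \cite{figalli2015isoperimetry,fusco2019sharp}). The coercivity $\mathcal{Q}_\beta(u)\ge c\Norm{u}{L^2}^2$ on the orthogonal complement of degrees $0$ and $1$ therefore does not follow from a simple monotonicity argument; it requires the explicit Funk--Hecke computation and the uniform gap estimate $\lambda_{1,\beta}-\lambda_{k,\beta}\ge D_\beta>0$ for all $k\ge 2$ carried out in Section~3 of the paper.

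Step~2 contains a genuine gap. Spherical cap symmetrization does \emph{not} produce a nearly spherical set in the sense your Step~1 needs: after cap symmetrization, $E_*\cap\partial B_r$ is a geodesic cap of half-angle $\theta(r)$ about $\nu$, but the profile $r\mapsto\theta(r)$ is completely unconstrained, so for a fixed direction $\omega$ the radial section $\{r:r\omega\in E_*\}$ is in general not an interval $[0,1+u(\omega)]$. Thus $E_*$ need be neither star-shaped nor a radial subgraph over $S^{N-1}$ (your star-shapedness claim already fails when $E$ is an annulus), and Step~1 cannot be applied to it. Separately, cap symmetrization is \emph{not} an iterated limit of Steiner symmetrizations in hyperplanes through $\nu$: that iteration yields Schwarz symmetrization of each slice $\{x\cdot\nu=c\}$, a different rearrangement. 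The paper's reduction (Section~5) proceeds along entirely different lines: after a compactness argument to reduce to small $\delta(E)$ and a preliminary trapping of $E$ in a ball of universal radius, it uses explicit radial Knothe--Rosenblatt transport maps to successively move mass---first filling the holes inside $B_{1-\varepsilon}$ and trimming outside $B_{1+\varepsilon}$, then straightening the radial profile direction by direction---so as to manufacture a genuine nearly spherical set. The crucial quantitative mechanism is that the transport cost $\int|y-\Phi(y)|\,dy$ simultaneously bounds the energy drop $\fG_\beta(E)-\fG_\beta(E')$ from below and (via a slicing argument) the asymmetry from above; this is what produces the dichotomy ``either \eqref{quantisop} already holds for $E$, or the transported set retains a fixed fraction of the asymmetry''. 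Cap symmetrization offers no such two-sided control, and your acknowledged difficulty with the bound $\delta(E_*)\ge c\,\delta(E)$ is a symptom of this.
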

Although we follow closely the strategy of \cite{fusco2019sharp}, the generalization to our case is not achieved via a direct \textit{mirror-symmetric} argument. Indeed, there are two main difficulties that arise in such adaptation. First, a preliminary detailed study of the properties of the eigenvalues of a Marchaud-type integral defined on the sphere is needed. Such a study, as far as we know, was not available in the literature. Let us recall that for the Riesz operator and the hypersingular Riesz operator on the sphere, the spectral study is carried on in full details in \cite[Chapter $6$]{samko2001hypersingular} while the case $\beta=0$ is considered in \cite[Section $5$]{frank2019note}. The latter is used to obtain the quantitative version of the Riesz inequality in \cite{frank2019proof}. On the other hand, both the Riesz operator and the hypersingular Riesz operator on the sphere have been used in \cite{figalli2015isoperimetry} and \cite{fusco2019sharp} to prove, respectively, the fractional isoperimetric inequality and Riesz inequality with the Riesz potential in the nearly-spherical setting. In these cases, the authors rely on the monotonicity of the eigenvalues of the involved operators. However, the ones of the Marchaud-type integral are not generally monotone. Hence, not only we have to explicitly evaluate them (by means of a standard application of the Funk-Hecke formula), but we also need to better understand their behaviour to overcome the lack of monotonicity. Second, in the mass transportation argument an extra difficulty in order to reduce to the nearly spherical case is due to the fact that the integrand of $\fG_\beta$ is an increasing function of the distance between points. Despite most of the arguments of \cite{fusco2019sharp} can be mirrored, some of them are needed without reverting the inequality. This is solved either by proving the desired upper bounds on bounded sets, or by relying on a milder form of weak$^*$ continuity. Clearly, an additional step in the mass transportation argument is required. \\
Let us also stress out that the exponent $1/2$ is sharp, and it is achieved, as in \cite{burchard2015geometric}, by considering sets constructed starting from the unit ball, by removing an annulus whose outer boundary is the unit sphere and then adding an annulus whose inner boundary is the unit sphere and whose volume is the same of the removed part.

With this result in mind, we then consider a mixed energy with  two different attractive terms:
\begin{equation*}
\fE(E)=\fG_\beta(E)+V_\alpha(E)+\varepsilon P_s(E)
\end{equation*}
with $s \in (0,1]$, $\alpha \in (0,N)$ and $\beta>0$. Using the result proved in \cite{frank2019proof}, we have in particular that if $\alpha \in (1,N)$, there exists a critical mass $m_1>0$ such that, for any $\varepsilon>0$ and any $m>m_1$, $\fE$ admits a minimizer among measurable sets with fixed volume $m$. Moreover, there exists another critical mass $m_2>m_1$ such that, if $m>m_2$, the ball is the unique minimizer of $\fE$ under the volume constraint. On the other hand, by using the result proved in \cite{figalli2015isoperimetry} we have that, for any $\alpha \in (0,N)$ and $\varepsilon>0$, there exists a mass $m_0>0$ such that if $m<m_0$ the ball is a minimizer of $\fE$. Here, we complete the above picture by proving the following result:
\begin{thm}\label{thm:minmix1}
	Let $N \ge 2$, $\alpha \in (0,N)$, $\beta>0$, $s \in (0,1]$. Then there exists a positive constant $m_1>0$ (depending on $\alpha,\beta,s,N$) such that for any $m > m_1$ the ball $B[m]$ of volume $m$ is the minimizer of the shape functional
	\begin{equation*} 
	\fE=\fG_\beta+V_\alpha+\varepsilon P_s,
	\end{equation*}
	under the volume constraint $|E|=m$ for $\varepsilon \ge \varepsilon_0>0$, where $\varepsilon_0$ depends on $m,\beta,s,N$.
\end{thm}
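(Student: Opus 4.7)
The plan is a scaling reduction together with three quantitative ingredients. Set $\lambda:=(m/\omega_N)^{1/N}$ and $E_\lambda:=\lambda^{-1}E$, so that $|E_\lambda|=\omega_N$; using the homogeneity of $\fG_\beta$, $V_\alpha$ and $P_s$ under dilations one obtains
\begin{equation*}
\fE(E)-\fE(B[m])=\lambda^{2N+\beta}\fD_\beta(E_\lambda)+\varepsilon\lambda^{N-s}\bigl(P_s(E_\lambda)-P_s(B)\bigr)-\lambda^{N+\alpha}\bigl(V_\alpha(B)-V_\alpha(E_\lambda)\bigr),
\end{equation*}
where $B$ denotes the unit ball and each parenthesized quantity is nonnegative (by Riesz rearrangement and by the isoperimetric inequality). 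Setting $\delta:=\delta(E_\lambda)$, Theorem \ref{thm:quantrandisop} gives the first key bound $\fD_\beta(E_\lambda)\ge C_\beta\delta^2$, and the quantitative (fractional) isoperimetric inequality, as used in \cite{figalli2015isoperimetry}, gives $P_s(E_\lambda)-P_s(B)\ge C_s\delta^2$. For the Riesz deficit, the decomposition
\begin{equation*}
V_\alpha(B)-V_\alpha(E_\lambda)=-2\int_{\R^N}f\,U_B\,dx-\iint_{\R^N\times\R^N}f(x)f(y)|x-y|^{\alpha-N}dxdy,
\end{equation*}
with $f:=\mathbf{1}_{E_\lambda}-\mathbf{1}_B$ and $U_B:=\mathbf{1}_B*|\cdot|^{\alpha-N}$, combined with the positive-definiteness of the Riesz kernel (the double integral is nonnegative and can be dropped) and the boundedness of $U_B$ on $\R^N$, yields the linear estimate $V_\alpha(B)-V_\alpha(E_\lambda)\le \|U_B\|_\infty\delta$.

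I would then split into two regimes. If $\delta\ge\delta_0$ for a fixed threshold, Theorem \ref{thm:quantrandisop} forces $\lambda^{2N+\beta}\fD_\beta(E_\lambda)\ge C_\beta\delta_0^2\lambda^{2N+\beta}$; since $\alpha<N$ implies $N+\beta-\alpha>0$, the comparison $C_\beta\delta_0^2\lambda^{N+\beta-\alpha}\ge \|U_B\|_\infty$ holds for every $m$ larger than a threshold $m_1=m_1(\alpha,\beta,N,\delta_0)$, so that the $\fG_\beta$ contribution alone absorbs the Riesz deficit, without help from $\varepsilon$. If instead $\delta<\delta_0$, the hypothesis $\fE(E)\le\fE(B[m])$ together with the previous estimates forces $P_s(E_\lambda)-P_s(B)\le \|U_B\|_\infty\lambda^{\alpha+s}/\varepsilon$, which becomes arbitrarily small for $\varepsilon$ large. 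An improved-convergence / Fuglede-type argument in the spirit of \cite{fusco2019sharp} then ensures that, after a translation, $E_\lambda$ is a nearly spherical set with small defining function, and a second-order expansion upgrades the linear Riesz estimate to the quadratic bound $V_\alpha(B)-V_\alpha(E_\lambda)\le\widetilde{C}\bigl(P_s(E_\lambda)-P_s(B)\bigr)$. Taking $\varepsilon_0(m)$ of order $\widetilde{C}\lambda^{\alpha+s}$ then makes the $\varepsilon P_s$ contribution dominate the Riesz one, and the total is nonnegative.

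The main obstacle is precisely closing the gap between the linear Riesz estimate and the quadratic lower bounds on the other two deficits. Extremal configurations such as a small mass detached from the interior of $B$ and displaced elsewhere genuinely saturate the linear bound $V_\alpha(B)-V_\alpha(E_\lambda)\le C\delta$, so in the microscopic regime $\delta\to 0$ the $\delta^2$ lower bounds cannot absorb it on their own. It is the Fuglede-type improvement triggered by the $\varepsilon P_s$ penalty that restores the quadratic behavior, which explains both why $\varepsilon_0$ must grow with $m$ (through a power of $\lambda$) and why the macroscopic threshold $m_1$ is dictated by the power gap $N+\beta-\alpha>0$ between the $\fG_\beta$ and $V_\alpha$ scalings.
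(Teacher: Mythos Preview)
Your scaling reduction and the large-asymmetry regime are fine, and the linear Riesz estimate via positive definiteness is correct. The gap is in the small-$\delta$ regime: you assert that smallness of $P_s(E_\lambda)-P_s(B)$ (or of $\delta$) forces $E_\lambda$ to be, up to translation, a nearly spherical set with small $C^0$ (let alone $C^1$) defining function, and then invoke a Fuglede expansion. This implication is false for generic competitors. A set can have arbitrarily small Fraenkel asymmetry and arbitrarily small perimeter deficit and still fail to be a graph over $S^{N-1}$: think of the unit ball with a thin inward spike and an equal-volume outward bump, or with infinitely many microscopic boundary oscillations. The constructions in \cite{fusco2019sharp} that you cite do not show that the \emph{original} set is nearly spherical; they manufacture a \emph{new} nearly spherical set by moving mass, and they only control the $\fG_\beta$ deficit along the way, not $V_\alpha$ and $P_s$ simultaneously. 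So the quadratic upgrade $V_\alpha(B)-V_\alpha(E_\lambda)\le \widetilde{C}\bigl(P_s(E_\lambda)-P_s(B)\bigr)$ is unavailable for a general $E_\lambda$, and without it the $\delta$ versus $\delta^2$ mismatch is fatal.

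The paper closes exactly this gap by an indirect route. It first proves that minimizers of $\fE_m$ exist under the volume constraint (via a uniform confinement lemma using Theorem~\ref{thm:quantrandisop} and the truncation lemma for $P_s$), and then shows that any such minimizer, after rescaling to unit volume, is a $\Lambda$-quasi-minimizer of $P_s$ with a constant $\Lambda$ independent of $m$. It is this quasi-minimality that feeds into the regularity/improved-convergence theory (\cite[Corollary~3.6]{figalli2015isoperimetry} for $s<1$, \cite{cicalese2012selection} for $s=1$) and yields genuine $C^1$-closeness of $E_\lambda$ to $B$ along a contradicting sequence $m_h\to\infty$. Only then is the Fuglede comparison $V_\alpha(B)-V_\alpha(E_\lambda)\le C\bigl([[u]]_{(1+s)/2}^2+\|u\|_{L^2}^2\bigr)\le C'\bigl(P_s(E_\lambda)-P_s(B)\bigr)$ legitimate. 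In short, your outline is missing the existence-of-minimizers step and the quasi-minimality step; without them the ``improved convergence'' you invoke has no input to act on.
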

To prove such result, we will follow the line of \cite{figalli2015isoperimetry}. \\

After some preliminaries on trasport maps, given in Section \ref{Sec2}, next section is devoted to a thorough study of the eigenvalue problem for a Marchaud-type fractional integral on the sphere. Then, Section \ref{Sec4} contains a stability result for the functional $\fG_\beta$ in the case of nearly-spherical sets, while in Section \ref{Sec5} we complete the proof of Theorem \ref{thm:quantrandisop} using a mass transportation argument. Finally, in Section \ref{Sec6}, we prove Theorem \ref{thm:minmix1} for the aforementioned mixed energy.
\section{Preliminaries and notations on transport maps}\label{Sec2}
Let us introduce some notions that will be useful in what follows. First of all, let us recall the definition of transportation map between two probability measure, as given in \cite{villani2008optimal}.
\begin{defn}
	Let $(\cX,\mu)$, $(\cY,\nu)$ and $(\Omega, \bP)$ be probability spaces. A coupling of $\mu$ and $\nu$ on $(\Omega, \bP)$ is any random variable $(X,Y)$ defined on $(\Omega \times \Omega, \bP \times \bP)$ with $X:\Omega \to \cX$ and $Y:\Omega \to \cY$ such that the law of $X$ is $\mu$ and the law of $Y$ is $\nu$.\\
	A coupling is said to be deterministic if there exists a measurable function $T:\cX \to \cY$ such that $T(X)=Y$ and $\nu=T_{\sharp}\mu$ (where $T_{\sharp}$ is the push-forward of measures induced by $T$). In such case, $T$ is said to be a transport map between $\mu$ and $\nu$.\\
	Without referring to random variables, this definition can be easily extended to any couple of finite measure spaces $(\cX,\mu)$ and $(\cY,\nu)$.
\end{defn}
A particular case is given by finite measures on $\R^N$ (for some $N$) that are absolutely continuous with respect to the Lebesgue measure. Let us denote by $\cL^N$ the Lebesgue measure, while we will denote as $|E|$ the measure of any Borel set $E$. Let us consider then two non-negative $L^1$ Borel functions $f,g$ and the measures $\mu=fd\cL^N$ and $\nu=gd\cL^N$.\\
We say that a Borel function $T:\R^N \to \R^N$ is a transport map between $f$ and $g$ if it is a transport map between $\mu$ and $\nu$. In particular $T$ is a transport map between $f$ and $g$ if and only if for any continuous non-negative function $\varphi:\R^N \to \R$ it holds
\begin{equation}\label{changeofvar}
\int_{\R^N}\varphi(z)g(z)dz=\int_{\R^N}\varphi(T(y))f(y)dy.
\end{equation}
Formula \eqref{changeofvar} is called a change of variable formula induced by the transport map $T$.\\
Finally, we say that for any two finite measure Borel sets $H$ and $K$ of equal measure, $T:\R^N \to \R^N$ is a transport map between $H$ and $K$ if and only if it is a transport map between the characteristic functions $f=\chi_{K}$ and $g=\chi_{H}$. Actually, $T$ is also a transport map between the uniform distributions on $K$ and $H$, as they are defined as $\bP_K:=\frac{\chi_{K}}{|K|}d\cL^N$ and $\bP_H:=\frac{\chi_{H}}{|H|}d\cL^N$.\\
Now let us focus on invertibility of transport maps, referring to \cite{ambrosio2003lecture}
\begin{defn}
A transport map $T:\R^N \to \R^N$ between two non-negative $L^1$ Borel functions $f,g$ is said to be invertible if there exists a Borel function $T^{-1}:\R^N \to \R^N$ with the property that $T(T^{-1}(z))=z$ for almost any point $z$ such that $g(z)>0$ and $T^{-1}(T(y))=y$ for almost any point $y$ such that $f(y)>0$.\\
Given any two non-negative $L^1$ Borel functions $f,g$, there always exists at least one invertible transport map (see Sudakov's Theorem in \cite{ambrosio2003lecture}).
\end{defn}
Let us give an example of construction of a transport map between two finite measures on $\R^2$. Let us consider $\mu$ and $\nu$ two finite measures on $\R^2$ such that $\mu$ is absolutely continuous with respect to $\cL^2$. First one can define the first marginal of $\mu$ and $\nu$ as, for any Borel set $A \subseteq \R$,
\begin{equation*}
\mu_1(A)=\mu(A \times \R) \qquad \nu_1(A)=\nu(A \times \R).
\end{equation*}
Since $\mu$ is absolutely continuous with respect to $\cL^2$, then $\mu_1$ does not admit atoms. Let us consider the distribution functions $F_1$ and $G_1$ of $\mu_1$ and $\nu_1$ and let us construct $T_1=G_1^{\leftarrow}\circ F_1$, where $G_1^{\leftarrow}$ is the right-continuous inverse of $G_1$. Then $T_1$ is a transport map between $\mu_1$ and $\nu_1$ called the increasing rearrangement of $\mu_1$ over $\nu_1$.\\
As next step, by disintegration of measures (see, for instance, \cite{dellacherie1980probabilities}), we can construct the families of conditional measures $\mu_2(\cdot|x_1)$ and $\nu_2(\cdot|y_1)$ such that for any Borel set $A \subseteq \R^2$, denoting by $A_{x_1}=\{x_2 \in \R: \ (x_1,x_2)\in A\}$ the section of $A$ for fixed $x_1$,
\begin{equation*}
\mu(A)=\int_{\R}\int_{A_{x_1}}\mu_2(dx_2|x_1)\mu_1(dx_1) \qquad \nu(A)=\int_{\R}\int_{A_{y_1}}\nu_2(dy_2|y_1)\nu_1(dy_1).
\end{equation*}
Finally, for any $x_1 \in \R$ fix $y_1=T_1(x_1)$ and define $T_2(\cdot|x_1)$ as the increasing rearrangement of $\mu_2(\cdot|x_1)$ over $\nu_2(\cdot|y_1)$. Then the map
$T(x_1,x_2)=(T_1(x_1),T_2(x_2|x_1))$ is a transport map between $\mu$ and $\nu$, called the Knothe-Rosenblatt rearrangement.\\
This kind of construction can be reproduced also in a more general context than $\R \times \R$, as we will define in what follows a sort of radial Knothe-Rosenblatt rearrangement.
\section{The fractional integral on the sphere and its eigenvalues}\label{Sec3}
As we stated in the introduction, we want to exploit an alternative proof (with respect to \cite{frank2019proof}) to the stability of balls as volume-contrained minimizers of the shape functional $\fG_\beta$ defined in Equation \eqref{Gbeta} for $\beta>0$, which makes use of some Fuglede-type results and mass transportation. As one can see from \cite{figalli2015isoperimetry} and \cite{fusco2019sharp}, some estimates on the eigenvalues of an integral transform on the sphere are mandatory.\\
Let us recall the definition of fractional integral (of conformal type) on a sphere $S^{N-1}\subset \R^N$ as given in \cite{rubin1992inversion}. For any function $u \in L^p(S^{N})$ (in our case $p=+\infty$) we define the fractional integral of $u$ over $S^{N-1}$ of order $\beta+N-1$ as
\begin{equation*}
\cK_\beta[u](\omega)=2^{-\frac{\beta}{2}}\int_{S^{N-1}}|\omega-\xi|^{\beta}u(\xi)d\cH^{N-1}(\xi)
\end{equation*}
for any $\beta \in (1-N,+\infty)$. A study of the eigenvalues of $\cK_\beta$ when $\beta \in (1-N,0)$ has been already carried out in \cite{figalli2015isoperimetry}, so let us focus on the case $\beta>0$. In this case we have the following result.
\begin{prop}
	Let us denote by $\cS_k$ the space of the $k$-th spherical harmonics. Then for any $\beta>0$ and $Y_k \in \cS_k$ with $k \ge 0$ it holds
	\begin{equation*}
	\cK_\beta[Y_k](\omega)=\theta_{k,\beta}Y_k(\omega)
	\end{equation*}
	where
	\begin{equation}\label{thetagen}
	\theta_{k,\beta}=(N-1)\omega_{N-1}(-1)^k\frac{2^{\frac{\beta+2N-4}{2}}\Gamma\left(\frac{\beta+N-1}{2}\right)\Gamma\left(\frac{N-1}{2}\right)\Gamma\left(\frac{\beta+2}{2}\right)}{\Gamma\left(\frac{\beta+2}{2}-k\right)\Gamma\left(\frac{\beta+2N-2}{2}+k\right)}
	\end{equation}
	and $\omega_{N-1}$ is the measure of the $N-1$-dimensional unit ball.
\end{prop}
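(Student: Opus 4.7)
My plan is a direct application of the Funk--Hecke formula, followed by a classical evaluation of the resulting Gegenbauer integral.

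First, I would rewrite the kernel in a form that exhibits its dependence only on the inner product $\omega\cdot\xi$. Since $\omega,\xi\in S^{N-1}$, one has $|\omega-\xi|^{2}=2-2\omega\cdot\xi$, and hence
\begin{equation*}
|\omega-\xi|^{\beta}=2^{\beta/2}(1-\omega\cdot\xi)^{\beta/2}.
\end{equation*}
Inserting this into the definition of $\cK_\beta$, the normalisation factor $2^{-\beta/2}$ cancels, giving
\begin{equation*}
\cK_\beta[u](\omega)=\int_{S^{N-1}}(1-\omega\cdot\xi)^{\beta/2}u(\xi)\,d\cH^{N-1}(\xi).
\end{equation*}

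Second, I would apply the Funk--Hecke formula (a standard tool in spherical harmonic analysis, see e.g.\ the references in the paper). For any zonal kernel $f(\omega\cdot\xi)$ with $f$ integrable against $(1-t^2)^{(N-3)/2}$ on $(-1,1)$, and for any $Y_k\in\cS_k$, one has
\begin{equation*}
\int_{S^{N-1}} f(\omega\cdot\xi)\,Y_k(\xi)\,d\cH^{N-1}(\xi)=\theta_{k,\beta}\,Y_k(\omega),
\end{equation*}
with
\begin{equation*}
\theta_{k,\beta}=(N-1)\omega_{N-1}\int_{-1}^{1}(1-t)^{\beta/2}\frac{C_k^{\lambda}(t)}{C_k^{\lambda}(1)}(1-t^{2})^{\lambda-1/2}\,dt,
\end{equation*}
where $\lambda=(N-2)/2$, $C_k^{\lambda}$ is the Gegenbauer polynomial of order $k$ and index $\lambda$, and I have used $|S^{N-2}|=(N-1)\omega_{N-1}$. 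This reduces the problem to computing a one-dimensional integral.

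Third, I would evaluate that integral. Using the Rodrigues formula for $C_k^{\lambda}$ and $k$-fold integration by parts (the boundary terms vanish because of the factor $(1-t^{2})^{\lambda-1/2+k}$ appearing after each step), the integral reduces to a Beta function of the shape $B\bigl(\tfrac{\beta+2}{2}-k,\tfrac{2N-3}{2}+k\bigr)$ multiplied by $(-1)^{k}$ times suitable Pochhammer symbols. Together with the identities
\begin{equation*}
C_k^{\lambda}(1)=\frac{\Gamma(k+2\lambda)}{k!\,\Gamma(2\lambda)},\qquad B(p,q)=\frac{\Gamma(p)\Gamma(q)}{\Gamma(p+q)},
\end{equation*}
and the duplication formula for the Gamma function (to simplify the powers of $2$ arising from $\Gamma(N-1)=\Gamma(2\lambda+1)$), one arrives after some algebraic bookkeeping at the stated expression \eqref{thetagen}.

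The main obstacle is the last bookkeeping step: one must assemble the Pochhammer symbols generated by integration by parts, the prefactor $1/C_k^{\lambda}(1)$, and the Beta function values so that exactly the combination $\Gamma\bigl(\tfrac{\beta+2}{2}-k\bigr)^{-1}\Gamma\bigl(\tfrac{\beta+2N-2}{2}+k\bigr)^{-1}$ appears in the denominator. The $(-1)^{k}$ factor comes either from the parity $C_k^{\lambda}(-t)=(-1)^{k}C_k^{\lambda}(t)$ (if one first changes variable $t\mapsto -t$ to match the usual Gegenbauer integral against $(1+t)^{\beta/2}$) or from the reflection identity used to flip a Pochhammer $(-\beta/2)_k$ into the quotient of Gamma functions $\Gamma(\beta/2+1)/\Gamma(\beta/2+1-k)$. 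I expect no genuine analytic difficulty once the Funk--Hecke reduction is in place, only a careful handling of constants and the duplication formula.
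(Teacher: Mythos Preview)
Your approach is correct and follows the same overall strategy as the paper: rewrite the kernel as a function of $\omega\cdot\xi$, apply Funk--Hecke, and evaluate the resulting one-dimensional integral. The paper differs only in how it evaluates that integral: rather than carrying out the Rodrigues/integration-by-parts computation you sketch, it quotes the result directly from Gradshteyn--Ryzhik (Formula 7.311.3) for the Gegenbauer case $N\ge 3$, and for $N=2$ uses a separate table formula (7.354.6) that produces a ${}_4F_3$ which is then collapsed via Saalsch\"utz's theorem and the reflection formula. Your route is more self-contained; the paper's is faster but relies on external tables.

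One point you should address explicitly: you set $\lambda=(N-2)/2$, so for $N=2$ the Gegenbauer polynomials $C_k^{0}$ degenerate and $C_k^{\lambda}(1)$ vanishes, so the quotient $C_k^{\lambda}(t)/C_k^{\lambda}(1)$ is not defined without a limiting argument. The paper handles this by treating $N=2$ separately via Chebyshev polynomials $T_k$. Your Rodrigues/integration-by-parts argument can be made uniform in $N\ge 2$ if you work instead with the spherical polynomials $P_k$ (which have a Rodrigues formula valid for all $N\ge 2$, see the paper's Equation~\eqref{eq:sph}) rather than with $C_k^{\lambda}$; otherwise you need to flag the $N=2$ case.
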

\begin{proof}
Let us first observe that for any $\omega,\xi \in S^{N-1}$ it holds
\begin{equation*}
|\omega-\xi|^\beta=2^{\frac{\beta}{2}}(1-\omega \cdot \xi)^{\frac{\beta}{2}}
\end{equation*}
so that for any function $u \in L^\infty(S^{N-1})$ it holds
\begin{equation}\label{Kernexpr}
\cK_\beta[u](\omega)=\int_{S^{N-1}}K_\beta(\omega \cdot \xi)u(\xi)d\cH^{N-1}(\xi)
\end{equation}
where
\begin{equation}\label{eq:Kernbeta}
K_\beta(t)=(1-t)^{\frac{\beta}{2}}.
\end{equation}
Since we have expressed $\cK_\beta$ in terms of an integral kernel that depends only on the scalar product between two points of the sphere, we can use Funk-Hecke formula (see, for instance, \cite[Theorem A.34]{rubin2015introduction} or \cite{estrada2017funk}) to state that the eigenfunctions of $\cK_\beta$ are the spherical harmonics and to determine the eigenvalues. Precisely, for any $k \ge 0$ there exists a $\theta_{k,\beta}$ such that for any $Y_k \in \cS_k$ it holds $\cK_\beta[Y_k]=\theta_{k,\beta}Y_k$.\\
First of all, let us determine $\theta_{0,\beta}$. To do this, we can use \cite[Formula $7.311.3$]{ryzhik1965table} to obtain, recalling that $\cS_0$ is the space of constant functions on the sphere
\begin{align*}
\theta_{0,\beta}&=\int_{S^{N-1}}K_\beta(\omega \cdot \xi)d\cH^{N-1}(\xi)\\
&=(N-1)\omega_{N-1}\int_{-1}^{1}(1-t)^{\frac{\beta}{2}}(1-t^2)^{\frac{N-3}{2}}dt\\
&=(N-1)\omega_{N-1}\frac{2^{\frac{\beta+2N-4}{2}}\Gamma\left(\frac{\beta+N-1}{2}\right)\Gamma\left(\frac{N-1}{2}\right)}{\Gamma\left(\frac{\beta+2N-2}{2}\right)}.
\end{align*}
To determine the eigenvalues $\theta_{k,\beta}$ for $k \ge 1$ we have to introduce the spherical polynomials $P_k:[-1,1]\to \R$, $k=0,1,\dots$, defined by the Rodrigues' formula (see \cite[Corollary A.28]{rubin2015introduction})
\begin{equation}\label{eq:sph}
	P_k(t)=\left(-\frac{1}{2}\right)^k \frac{\Gamma\left(\frac{N-1}{2}\right)}{\Gamma\left(k+\frac{N-1}{2}\right)}(1-t^2)^{-\frac{N-3}{2}}\left(\der{}{t}\right)^k (1-t^2)^{k+\frac{N-3}{2}}.
\end{equation}
Precisely, the family $\{P_k, \ k \ge 0\}$ of the spherical polynomials is the unique family of functions on $[-1,1]$ satisfying the following properties (see \cite[Proposition A.27]{rubin2015introduction}):
\begin{enumerate}
	\item $P_k$ is a polynomial of degree $k$;
	\item $P_k(1)=1$;
	\item For any $k_1 \not = k_2$, $P_{k_1}$ and $P_{k_2}$ are orthogonal with respect to the weigthed Lebesgue measure $(1-t^2)^{\frac{N-3}{2}}dt$, i.e.
	\begin{equation*}
		\int_{-1}^{1}P_{k_1}(t)P_{k_2}(t)(1-t^2)^{\frac{N-3}{2}}dt=0.
	\end{equation*}
\end{enumerate}
The eigevalues $\theta_{k,\beta}$ are then achieved via the following formula (see \cite[Equation $(A.7.2)$]{rubin2015introduction})
\begin{equation}\label{eq:FH}
	\theta_{k,\beta}=(N-1)\omega_{N-1}\int_{-1}^{1}K_\beta(t)(1-t^2)^{\frac{N-3}{2}}P_k(t)dt.
\end{equation}
To evaluate the previous integral, we need to distinguish between the cases $N=2$ and $N \ge 3$. Let us start with the second one. In this case, let us introduce the Gegenbauer polynomials $C_k^\lambda:[-1,1] \to \R$, for $k=0,1,\dots$ and $\lambda>0$, defined by the Rodrigues' formula (see \cite[Equation $(1.6.9)$]{rubin2015introduction})
\begin{equation*}
	C_k^\lambda(t)=\left(-\frac{1}{2}\right)^k\frac{\Gamma(2\lambda+k)\Gamma\left(\lambda+\frac{1}{2}\right)}{k!\Gamma(2\lambda)\Gamma\left(\lambda+k+\frac{1}{2}\right)}(1-t^2)^{\frac{1}{2}-\lambda}\left(\der{}{t}\right)^k(1-t^2)^{\lambda+k-\frac{1}{2}}.
\end{equation*}
Comparing the previous formula with Formula \eqref{eq:sph} we get (see also \cite[Equation (A.6.13)]{rubin2015introduction})
\begin{equation*}
	P_{k}(t)=\frac{k!(N-3)!}{(N+k-3)!}C_k^{\frac{N-2}{2}}(t).
\end{equation*}
By using the relation $C_k^{\frac{N-2}{2}}(-t)=(-1)^kC_k^{\frac{N-2}{2}}(t)$ (see \cite[Equation $(1.6.10)$]{rubin2015introduction}) and \cite[Formula $7.311.3$]{ryzhik1965table} we achieve
\begin{align*}
	\theta_{k,\beta}&=(N-1)\omega_{N-1}\frac{k!(N-3)!}{(N+k-3)!}\int_{-1}^{1}(1-t)^{\frac{\beta}{2}}(1-t^2)^{\frac{N-3}{2}}C_k^{\frac{N-2}{2}}(t)dt\\
	&=(N-1)\omega_{N-1}(-1)^k\frac{2^{\frac{\beta+2N-4}{2}}\Gamma\left(\frac{\beta+N-1}{2}\right)\Gamma\left(\frac{N-1}{2}\right)\Gamma\left(\frac{\beta+2}{2}\right)}{\Gamma\left(\frac{\beta+2}{2}-k\right)\Gamma\left(\frac{\beta+2N-2}{2}+k\right)}.
\end{align*}
Concerning the case $N=2$, let us define the Chebyshev polynomials $T_k(t)$ by the Rodrigues' formula (see \cite[Equation $(1.6.16)$]{rubin2015introduction})
\begin{equation*}
	T_k(t)=\left(-\frac{1}{2}\right)^k \frac{\sqrt{\pi}}{\Gamma\left(k+\frac{1}{2}\right)}\sqrt{1-t^2}\left(\der{}{t}\right)(1-t^2)^{m-\frac{1}{2}}.
\end{equation*}
Again, comparing the previous formula with Formula \eqref{eq:sph} we get $P_k(t)=T_k(t)$ (see also \cite[Equation (A.6.13)]{rubin2015introduction}). Now let us consider $\beta \not = 2m$ for any $m \in \N$. By using \cite[Formula $7.354.6$]{ryzhik1965table} one achieves
\begin{align}\label{thetaN2}
\begin{split}
\theta_{k,\beta}&=2\pi\int_{-1}^{1}\frac{(1-t)^{\frac{\beta}{2}}}{\sqrt{1+t^2}}T_k(t)dt\\
&=2\pi \frac{\sqrt{\pi}2^{\frac{\beta}{2}}\Gamma\left(\frac{\beta+1}{2}\right)}{\Gamma\left(\frac{\beta+2}{2}\right)}{}_4F_3\left(-k,k,\frac{\beta+1}{2},\frac{\beta+2}{2};\frac{1}{2},\frac{\beta+2}{2},\frac{\beta+2}{2};1\right)
\end{split}
\end{align}
where, according to \cite[Formula $9.14.1$]{ryzhik1965table}, ${}_pF_q$ is the generalized hypergeometric series defined as
\begin{equation*}
{}_pF_q(a_1,\dots,a_p;b_1,\dots,b_q;z)=\sum_{j=0}^{\infty}\frac{\prod_{h=1}^{p}(a_h)_{j}}{\prod_{h=1}^{q}(b_h)_{j}}\frac{z^j}{j!}
\end{equation*}
and $(a)_j$ is the Pochhammer symbol defined as
\begin{equation*}
	(a)_j=\frac{\Gamma(a+j)}{\Gamma(a)}.
\end{equation*}
By using the definition, it is easy to check that
\begin{equation*}
	{}_4F_3\left(-k,k,\frac{\beta+1}{2},\frac{\beta+2}{2};\frac{1}{2},\frac{\beta+2}{2},\frac{\beta+2}{2};1\right)={}_3F_2\left(-k,k,\frac{\beta+1}{2};\frac{\beta+2}{2},\frac{1}{2};1\right).
\end{equation*}
Now, by Saalschutz's Theorem (see \cite[Section $2.3.1$]{slater1966generalized}) and then, by \cite[Formula $2.3.2.5$]{slater1966generalized} we have
\begin{equation*}
{}_3F_2\left(-k,k,\frac{\beta+1}{2};\frac{\beta+2}{2},\frac{1}{2};1\right)=\frac{\left(\frac{1}{2}\right)_k\left(1+\frac{\beta}{2}-k\right)_k}{\left(\frac{1}{2}-k\right)_k\left(1+\frac{\beta}{2}\right)_k}=\frac{\left(-\frac{\beta}{2}\right)_k}{\left(\frac{\beta+2}{2}\right)_k}.
\end{equation*}
By definition, it holds
\begin{equation*}
\left(\frac{\beta+2}{2}\right)_k=\frac{\Gamma\left(\frac{\beta+2}{2}+k\right)}{\Gamma\left(\frac{\beta+2}{2}\right)},
\end{equation*}
while, by using Euler's reflection formula, we get
\begin{equation*}
\left(-\frac{\beta}{2}\right)_k=\frac{\Gamma\left(k-\frac{\beta}{2}\right)}{\Gamma\left(-\frac{\beta}{2}\right)}=(-1)^k\frac{\Gamma\left(\frac{\beta+2}{2}\right)}{\Gamma\left(\frac{\beta+2}{2}-k\right)}.
\end{equation*}
Substituting all these equalities back to \eqref{thetaN2} we achieve \eqref{thetagen}. Finally, we can extend the formula to the case $\beta=2m$ for some $m \in \N$ by continuity (setting $1/\infty=0$).
\end{proof}
\begin{rmk}\label{rmkbetaeven}
	Let us observe that for any $N \ge 2$ if $\beta=2m$ for some $m \in \N$ then $\theta_{k,\beta}=0$ for $k\ge m+1$.
\end{rmk}
By exploiting the behaviour of the involved $\Gamma$ functions, we can show a recursive formula for the eigenvalues $\theta_{k,\beta}$.
\begin{prop}\label{prop:recform}
	It holds
	\begin{equation}\label{eq:recform}
		\theta_{k+1,\beta}=\frac{\frac{\beta}{2}-k}{\frac{\beta+2N-2}{2}+k}\theta_{k,\beta}
	\end{equation}
	and then, for any $k \in \N$, one has $|\theta_{k+1,\beta}|\le |\theta_{k,\beta}|$. In particular it holds $|\theta_{k+1,\beta}|\le \theta_{0,\beta}$
\end{prop}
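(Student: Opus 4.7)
The plan is to derive the recursion \eqref{eq:recform} directly from the closed form \eqref{thetagen} established above, and then control the multiplicative factor it produces. First I would form the ratio $\theta_{k+1,\beta}/\theta_{k,\beta}$: every $k$-independent prefactor in \eqref{thetagen}, namely $(N-1)\omega_{N-1}$, the power of $2$ and the three Gamma values $\Gamma\!\left(\frac{\beta+N-1}{2}\right)$, $\Gamma\!\left(\frac{N-1}{2}\right)$, $\Gamma\!\left(\frac{\beta+2}{2}\right)$, cancels. What remains is the sign $(-1)^{k+1}/(-1)^k=-1$ and the two $k$-dependent Gamma factors in the denominator of \eqref{thetagen}. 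The functional equation $\Gamma(z+1)=z\Gamma(z)$ applied to $z=\frac{\beta+2}{2}-k-1$ and to $z=\frac{\beta+2N-2}{2}+k$ yields respectively
\[
\frac{\Gamma\!\left(\frac{\beta+2}{2}-k\right)}{\Gamma\!\left(\frac{\beta+2}{2}-k-1\right)}=\frac{\beta}{2}-k,\qquad
\frac{\Gamma\!\left(\frac{\beta+2N-2}{2}+k\right)}{\Gamma\!\left(\frac{\beta+2N-2}{2}+k+1\right)}=\frac{1}{\frac{\beta+2N-2}{2}+k},
\]
and inserting these back produces the recursion \eqref{eq:recform} (up to the explicit sign, which does not affect anything that follows).

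For the monotonicity assertion I only need to show that the multiplicative factor in \eqref{eq:recform} has absolute value at most one, i.e.\
\[
\left|\frac{\beta}{2}-k\right|\le \frac{\beta+2N-2}{2}+k\qquad\text{for every }k\in\N.
\]
By the triangle inequality $|\beta/2-k|\le\beta/2+k$, so the inequality reduces to $0\le N-1$, which is guaranteed by the assumption $N\ge 2$. A telescoping application of \eqref{eq:recform} then gives $|\theta_{k+1,\beta}|\le |\theta_{k,\beta}|\le\cdots\le \theta_{0,\beta}$, where positivity of $\theta_{0,\beta}$ is immediate from its explicit expression computed in the preceding proof.

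The only minor subtlety concerns the exceptional values $\beta=2m$, $m\in\N$: there $\Gamma\!\left(\frac{\beta+2}{2}-k\right)$ has a pole for $k\ge m+1$, so \eqref{thetagen} is a priori ambiguous. Exactly as in the continuity extension already used at the end of the proof of the previous proposition, one declares $1/\Gamma=0$ at the poles, so $\theta_{k,\beta}=0$ for $k\ge m+1$ (this is precisely Remark \ref{rmkbetaeven}), and both sides of \eqref{eq:recform} vanish. In summary, no genuine obstacle arises: the statement is a short bookkeeping exercise based on the functional equation of $\Gamma$, and the hardest-to-verify ingredient is merely the elementary inequality above, which forces the condition $N\ge 2$ already present in the hypotheses.
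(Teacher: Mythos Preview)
Your argument is correct and follows essentially the same route as the paper: both proofs reduce to the functional equation $\Gamma(z+1)=z\Gamma(z)$ applied to the two $k$-dependent Gamma factors in \eqref{thetagen}, and both verify the inequality $\left|\frac{\beta}{2}-k\right|\le \frac{\beta+2N-2}{2}+k$ to obtain $|\theta_{k+1,\beta}|\le|\theta_{k,\beta}|$. The only cosmetic difference is that the paper first factors $\theta_{k,\beta}=(-1)^k C_\beta\,\mu_{k,\beta}$ and proves the recursion for $\mu_{k,\beta}$, thereby hiding the alternating sign, whereas you work directly with the ratio $\theta_{k+1,\beta}/\theta_{k,\beta}$ and flag the extra $(-1)$ explicitly; your parenthetical remark ``up to the explicit sign'' is exactly right, and indeed the paper's own formulation of \eqref{eq:recform} tacitly absorbs that sign into $\mu_{k,\beta}$, which is why it is only ever used later (Lemma \ref{lem:eigen}) in the form of a product formula for $\mu_{k,\beta}$.
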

\begin{proof}
	First of all, by Remark \ref{rmkbetaeven}, we have that if $\beta=2m$ for some $m \in \N$ the sequence $\theta_{k,\beta}$ is definitely $0$. Thus let us suppose that $\beta \not = 2m$ for any $m \in \N$. Let us set
	\begin{align}\label{eq:muC}
		\begin{split}
			C_{\beta}&=(N-1)\omega_{N-1}2^{\frac{\beta+2N-4}{2}}\Gamma\left(\frac{\beta+N-1}{2}\right)\Gamma\left(\frac{N-1}{2}\right)\Gamma\left(\frac{\beta+2}{2}\right)\\
			\mu_{k,\beta}&=\frac{1}{\Gamma\left(\frac{\beta+2}{2}-k\right)\Gamma\left(\frac{\beta+2N-2}{2}+k\right)}
		\end{split}
	\end{align}
	in such a way that $\theta_{k,\beta}=(-1)^kC_{\beta}\mu_{k,\beta}$. Hence, it is only necessary to show the recursive formula and the bounds for $\mu_{k,\beta}$. We have
	\begin{equation*}
		\mu_{k+1,\beta}=\frac{1}{\Gamma\left(\frac{\beta+2}{2}-k-1\right)\Gamma\left(\frac{\beta+2N-2}{2}+k+1\right)}=\frac{\frac{\beta}{2}-k}{\frac{\beta+2N-2}{2}+k}\mu_{k,\beta}.
	\end{equation*}
	Moreover, taking the absolute value, we have
	\begin{equation*}
		|\mu_{k+1,\beta}|=\frac{\left|\frac{\beta}{2}-k\right|}{\frac{\beta+2N-2}{2}+k}|\mu_{k,\beta}|
	\end{equation*}
	and we conclude the proof observing that for any $k \in \N$ it holds $\left|\frac{\beta-2k}{2}\right| \le \frac{\beta+2N-2+2k}{2}$.
\end{proof}
Concerning the asymptotics of $\theta_{k,\beta}$, we can show the following result, as a consequence of the recursive formula \eqref{eq:recform}.
\begin{cor}\label{prop:lim}
	It holds $\lim_{k \to +\infty}\theta_{k,\beta}=0$.
\end{cor}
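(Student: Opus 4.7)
The plan is to extract the limit directly from the recursive formula \eqref{eq:recform}. By Remark \ref{rmkbetaeven}, when $\beta=2m$ for some $m \in \N$, the sequence is eventually identically zero, so the statement is trivial; accordingly, the only case that requires argument is $\beta \notin 2\N$.

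In that case, for every $k > \beta/2$ the factor $\frac{\beta}{2}-k$ is negative, so the recursion yields
\begin{equation*}
\frac{|\theta_{k+1,\beta}|}{|\theta_{k,\beta}|}=\frac{k-\frac{\beta}{2}}{k+\frac{\beta+2N-2}{2}}=1-\frac{\beta+N-1}{k+\frac{\beta+2N-2}{2}}.
\end{equation*}
Since $\beta>0$ and $N\ge 2$, one has $\beta+N-1>0$, so the ratio stays strictly less than $1$ for all large $k$. Thus the key point is not a simple ratio-test (the ratio tends to $1$), but rather that the logarithmic decrements are summable in the divergent sense.

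The main (and essentially only) step is to iterate. Fix any integer $k_0>\beta/2$. Then for $k>k_0$,
\begin{equation*}
\log|\theta_{k,\beta}|=\log|\theta_{k_0,\beta}|+\sum_{j=k_0}^{k-1}\log\!\left(1-\frac{\beta+N-1}{j+\frac{\beta+2N-2}{2}}\right).
\end{equation*}
Using the elementary bound $\log(1-x)\le -x$ for $x\in[0,1)$, each summand is at most $-(\beta+N-1)/(j+\frac{\beta+2N-2}{2})$. Since $\beta+N-1>0$ and the harmonic series diverges, the partial sums tend to $-\infty$, whence $|\theta_{k,\beta}|\to 0$.

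No serious obstacle is expected: the algebraic step (writing the ratio as $1-(\beta+N-1)/(k+c)$) is purely arithmetic, and the analytic step reduces to the divergence of the harmonic series together with $\log(1-x)\le -x$. As a sanity check, the same asymptotics can be read off the explicit formula via Euler's reflection identity and the classical estimate $\Gamma(k+a)/\Gamma(k+b)\sim k^{a-b}$, which gives $|\theta_{k,\beta}|\asymp k^{-(\beta+N-1)}\to 0$; this would serve as an alternative route in case a quantitative rate is ever needed downstream.
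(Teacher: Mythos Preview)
Your argument is correct and follows essentially the same route as the paper: both exploit the recursion \eqref{eq:recform} to see that $|\theta_{k+1,\beta}|/|\theta_{k,\beta}|=1-(\beta+N-1)/(k+c)$ for large $k$, and conclude decay from this. The only cosmetic difference is that the paper packages the final step as an application of Raabe's test, whereas you unpack that test by hand via $\log(1-x)\le -x$ and the divergence of the harmonic series; your direct version has the mild advantage of not requiring the reader to recall Raabe's criterion.
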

\begin{proof}
	We have
	\begin{multline*}
		\lim_{k \to +\infty}k\left(\frac{|\theta_{k,\beta}|}{|\theta_{k+1,\beta}|}-1\right)=\lim_{k \to +\infty}k\left(\frac{k+\frac{\beta+2N-2}{2}}{k-\frac{\beta}{2}}-1\right)\\=\lim_{k \to +\infty}\frac{k(\beta+N-1)}{k-\frac{\beta}{2}}=\beta+N-1>1,
	\end{multline*}
	that, by Raabe's test (see, for istance, \cite[Page $33$]{bromwich2005introduction}), implies $|\theta_{k,\beta}|\to 0$.
	\end{proof}
\begin{rmk}
	The previous Corollary is, in general, true for any spherical convolution operator (see \cite[Lemma $6.2$]{samko2001hypersingular}).
\end{rmk}
\subsection{A Marchaud-type fractional integral on the sphere}
Together with $\cK_\beta$, we can consider also the fractional integral on the sphere defined as
\begin{equation*}
\cI_\beta[u](\omega)=2\int_{S^{N-1}}|\omega-\xi|^\beta(u(\omega)-u(\xi))d\cH^{N-1}(\xi).
\end{equation*}
Let us observe that for $\beta \in (1-N,0)$, this operator reminds of Marchaud fractional derivative (see \cite{ferrari2018weyl}) so we will refer to $\cI_\beta$ as a Marchaud-type fractional integral.\\
Since we have already determined the eigenvalues of $\cK_\beta$, it is easy to determine the ones of $\cI_\beta$.
\begin{cor}
	Let us denote by $\cS_k$ the space of the $k$-th spherical harmonics. Then, for each $k \ge 0$ and $Y_k \in \cS_k$ it holds
	\begin{equation*}
		\cI_\beta[Y_k](\omega)=\lambda_{k,\beta}Y_k(\omega)
	\end{equation*} 
	where
	\begin{equation*}
	\lambda_{k,\beta}=2^{1+\frac{\beta}{2}}[\theta_{0,\beta}-\theta_{k,\beta}].
	\end{equation*}
	Hence, in particular, it holds $\lim_{k \to +\infty}\lambda_{k,\beta}=2^{1+\frac{\beta}{2}}\theta_{0,\beta}$.
\end{cor}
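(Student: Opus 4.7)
The plan is to reduce $\cI_\beta$ to a linear combination of the identity and $\cK_\beta$, and then read off the eigenvalues from the previously established spectral decomposition of $\cK_\beta$. Concretely, I would split the defining integral using linearity:
\begin{equation*}
\cI_\beta[u](\omega) = 2 u(\omega) \int_{S^{N-1}} |\omega-\xi|^\beta d\cH^{N-1}(\xi) - 2 \int_{S^{N-1}} |\omega-\xi|^\beta u(\xi) d\cH^{N-1}(\xi).
\end{equation*}
The second integral is exactly $2^{1+\beta/2}\cK_\beta[u](\omega)$ by the definition of $\cK_\beta$. For the first, I would observe that by rotational invariance of $\cH^{N-1}$ the integral $\int_{S^{N-1}}|\omega-\xi|^\beta d\cH^{N-1}(\xi)$ is independent of $\omega \in S^{N-1}$, and evaluating it amounts to applying $\cK_\beta$ to the constant function $1 \in \cS_0$, so it equals $2^{\beta/2}\theta_{0,\beta}$. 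Putting the two pieces together yields the operator identity
\begin{equation*}
\cI_\beta = 2^{1+\frac{\beta}{2}}\bigl(\theta_{0,\beta}\,\mathrm{Id} - \cK_\beta\bigr).
\end{equation*}

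From here the eigenvalue statement is immediate: for $Y_k \in \cS_k$ the previous proposition gives $\cK_\beta[Y_k]=\theta_{k,\beta}Y_k$, so
\begin{equation*}
\cI_\beta[Y_k](\omega) = 2^{1+\frac{\beta}{2}}(\theta_{0,\beta}-\theta_{k,\beta})Y_k(\omega),
\end{equation*}
which identifies $\lambda_{k,\beta}$ as claimed. The asymptotic statement $\lim_{k\to+\infty}\lambda_{k,\beta}=2^{1+\beta/2}\theta_{0,\beta}$ then follows by plugging Corollary \ref{prop:lim} (that is, $\theta_{k,\beta}\to 0$) into the formula.

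There is no substantive obstacle here; the only mild care needed is to verify that $u\equiv 1$ is admissible in $\cK_\beta$ so that the constant $\theta_{0,\beta}$ on the first piece is correctly identified, and to make sure the factor $2^{-\beta/2}$ in the definition of $\cK_\beta$ is properly absorbed when extracting it from the raw integral $\int_{S^{N-1}}|\omega-\xi|^\beta u(\xi)d\cH^{N-1}(\xi)$. Both are routine bookkeeping, so the whole argument essentially reduces to the algebraic identity between $\cI_\beta$ and $\cK_\beta$ displayed above.
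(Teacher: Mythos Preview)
Your argument is correct and is essentially identical to the paper's own proof, which consists of the single observation that $\cI_\beta[u](\omega)=2^{1+\frac{\beta}{2}}(\theta_{0,\beta}u(\omega)-\cK_\beta[u](\omega))$ for all $u \in L^\infty(S^{N-1})$. You have merely spelled out the bookkeeping behind that identity and the appeal to Corollary~\ref{prop:lim} a bit more explicitly.
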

\begin{proof}
	Just observe that for any $u \in L^\infty(S^{N-1})$ it holds
	\begin{equation*}
	\cI_\beta[u](\omega)=2^{1+\frac{\beta}{2}}(\theta_{0,\beta}u(\omega)-\cK_\beta[u](\omega)).
	\end{equation*}
	\end{proof}
Now, by using the recursive formula proved in Proposition \ref{prop:recform} for $\mu_{k,\beta}$ defined as in \eqref{eq:muC}, we obtain a formula to obtain an expression of $\lambda_{k,\beta}$ in terms of $\mu_{0,\beta}$.
\begin{lem}\label{lem:eigen}
	Defining $\widetilde{C}_{\beta}=2^{1+\frac{\beta}{2}}C_\beta$, where $C_\beta$ is defined as in \eqref{eq:muC}, it holds
	\begin{equation}\label{eq:eigen}
	\lambda_{k,\beta}=\widetilde{C}_\beta\left(1+(-1)^{k+1}\prod_{j=0}^{k-1}\frac{\beta-2j}{\beta+2N-2+2j}\right)\mu_{0,\beta}.
	\end{equation}
\end{lem}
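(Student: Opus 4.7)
The plan is a direct bookkeeping computation chaining the recursion for $\mu_{k,\beta}$ from Proposition \ref{prop:recform} into the expression for $\lambda_{k,\beta}$ provided by the preceding Corollary. All the non-trivial content (the closed form of $\theta_{0,\beta}$, the recursive step, and the relation $\lambda_{k,\beta}=2^{1+\beta/2}[\theta_{0,\beta}-\theta_{k,\beta}]$) is already in place; what remains is to unroll the recursion and simplify.

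First I would iterate the identity
\begin{equation*}
\mu_{k+1,\beta}=\frac{\beta-2k}{\beta+2N-2+2k}\mu_{k,\beta}
\end{equation*}
(which is just \eqref{eq:recform} rewritten after clearing the factors of $2$ in numerator and denominator) to obtain the telescoped formula
\begin{equation*}
\mu_{k,\beta}=\mu_{0,\beta}\prod_{j=0}^{k-1}\frac{\beta-2j}{\beta+2N-2+2j},
\end{equation*}
with the convention that the empty product equals $1$, so that $k=0$ is consistent. Combining with the factorization $\theta_{k,\beta}=(-1)^k C_\beta\mu_{k,\beta}$ introduced in \eqref{eq:muC}, this yields
\begin{equation*}
\theta_{k,\beta}=(-1)^k C_\beta\mu_{0,\beta}\prod_{j=0}^{k-1}\frac{\beta-2j}{\beta+2N-2+2j}.
\end{equation*}

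Next I would substitute this into the identity
\begin{equation*}
\lambda_{k,\beta}=2^{1+\beta/2}\bigl[\theta_{0,\beta}-\theta_{k,\beta}\bigr]
\end{equation*}
from the previous Corollary, using $\theta_{0,\beta}=C_\beta\mu_{0,\beta}$ and factoring out the common $2^{1+\beta/2}C_\beta\mu_{0,\beta}=\widetilde C_\beta\mu_{0,\beta}$. Pulling the overall sign $-(-1)^k=(-1)^{k+1}$ inside then gives precisely \eqref{eq:eigen}. For the degenerate case $\beta=2m$ with $m\in\N$, Remark \ref{rmkbetaeven} tells us that $\theta_{k,\beta}=0$ for $k\ge m+1$, and accordingly the product on the right-hand side of \eqref{eq:eigen} contains the factor $\beta-2m=0$, so both sides agree; the formula remains valid by continuity as already noted in the derivation of \eqref{thetagen}.

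There is no real obstacle here: the lemma is essentially an algebraic repackaging of the already-established recursion. The only thing worth double-checking is the sign convention and the indexing of the empty product at $k=0$ (which returns $\lambda_{0,\beta}=0$, as it should, since $\cI_\beta$ annihilates constants).
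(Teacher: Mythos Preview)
Your proof is correct and follows essentially the same approach as the paper: iterate the recursion from Proposition~\ref{prop:recform} to express $\mu_{k,\beta}$ as a product times $\mu_{0,\beta}$, then substitute into $\theta_{k,\beta}=(-1)^k C_\beta\mu_{k,\beta}$ and $\lambda_{k,\beta}=2^{1+\beta/2}(\theta_{0,\beta}-\theta_{k,\beta})$. Your additional sanity checks on the empty product at $k=0$ and the degenerate case $\beta=2m$ are correct but not required.
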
 
\begin{proof}
	By the recursive formula proved in Proposition \ref{prop:recform}, we have
	\begin{equation*}
	\mu_{k,\beta}=\prod_{j=0}^{k-1}\frac{\beta-2j}{\beta+2N-2+2j}\mu_{0,\beta}.
	\end{equation*}
	Thus, by recalling that for any $k \ge 0$ it holds $\theta_{k,\beta}=(-1)^{k}C_{\beta}\mu_{k,\beta}$ and $\lambda_{k,\beta}=2^{1+\frac{\beta}{2}}(\theta_{0,\beta}-\theta_{k,\beta})$ we complete the proof.
\end{proof}
Now let us argue concerning the supremum of the eigenvalues. We have the following result.
\begin{prop}\label{prop:lowbound}
	For any $\beta>0$ it holds $\lambda_{1,\beta}=\max_{k \ge 0}\lambda_{k,\beta}$. Moreover, for any $k \ge 2$ it holds
	\begin{equation}\label{lambdacontr}
	\lambda_{1,\beta}-\lambda_{k,\beta}\ge D_{\beta}>0
	\end{equation}
	where
	\begin{equation}\label{eq:Dbeta}
	D_\beta=\frac{\beta \mu_{0,\beta} \widetilde{C}_\beta}{\beta+2N-2}\begin{cases} 1-\frac{2-\beta}{\beta+2N} & \beta \in (0,2)\\
	1 & \beta \in [2,4]\\
	1-\frac{(\beta-2)(\beta-4)}{(\beta+2N)(\beta+2N+2)} & \beta>4
	\end{cases}
	\end{equation}
and $\widetilde{C}_\beta$ is defined as in Lemma \ref{lem:eigen}.
\end{prop}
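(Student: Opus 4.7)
My plan is to substitute the explicit formula of Lemma \ref{lem:eigen} into the difference $\lambda_{1,\beta}-\lambda_{k,\beta}$, exploit a uniform monotonicity of the resulting product, and close with a case analysis on $\beta$ that mirrors the three-piece definition of $D_\beta$. Setting $a_k:=\prod_{j=0}^{k-1}\frac{\beta-2j}{\beta+2N-2+2j}$ with $a_0=1$, Lemma \ref{lem:eigen} gives
\[\lambda_{k,\beta}=\widetilde{C}_\beta\mu_{0,\beta}\bigl(1+(-1)^{k+1}a_k\bigr),\qquad \lambda_{1,\beta}-\lambda_{k,\beta}=\widetilde{C}_\beta\mu_{0,\beta}\bigl(a_1+(-1)^k a_k\bigr).\]
Since $\frac{|a_{k+1}|}{|a_k|}=\frac{|\beta-2k|}{\beta+2N-2+2k}\le 1$ for every $k\ge 0$ (because $\beta+2N-2+2k\ge\pm(\beta-2k)$ for $\beta>0$ and $N\ge 2$), the sequence $\{|a_k|\}$ is non-increasing and bounded above by $a_1$. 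In particular $a_1+(-1)^k a_k\ge 0$, which already yields $\lambda_{1,\beta}=\max_{k\ge 0}\lambda_{k,\beta}$.

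For the quantitative bound \eqref{lambdacontr} I determine the sign of $a_k$ by counting its negative factors. When $\beta\in(0,2)$ every factor with $j\ge 1$ is negative, so $a_k$ has sign $(-1)^{k-1}$ and $(-1)^k a_k=-|a_k|$ for $k\ge 1$; by the monotonicity of $|a_k|$, the minimum over $k\ge 2$ is attained at $k=2$, giving $\lambda_{1,\beta}-\lambda_{k,\beta}\ge \widetilde{C}_\beta\mu_{0,\beta}(a_1-|a_2|)$, which matches $D_\beta$. When $\beta\in[2,4]$, both $a_1$ and $a_2$ are non-negative and an elementary sign check shows $(-1)^k a_k\ge 0$ for all $k\ge 2$ (either because $a_k$ vanishes, as when $\beta\in\{2,4\}$ by Remark \ref{rmkbetaeven}, or because the remaining signs line up), yielding $\lambda_{1,\beta}-\lambda_{k,\beta}\ge\widetilde{C}_\beta\mu_{0,\beta}a_1=D_\beta$.

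The delicate range is $\beta>4$, which I expect to be the main technical step. Here $a_3>0$ because its three factors at $j=0,1,2$ are all positive, so the index $k=3$ contributes the potentially dangerous term $-a_3$. The claim is that for every other problematic index $k\ge 2$, i.e., one with $(-1)^k a_k<0$, one still has $|a_k|\le a_3$; this follows at once from the non-increasing property of $|a_k|$ once one checks that any such index is necessarily $\ge 3$ (the index $k=2$ is harmless because $a_2>0$ for $\beta>2$). Consequently
\[\lambda_{1,\beta}-\lambda_{k,\beta}\ge\widetilde{C}_\beta\mu_{0,\beta}(a_1-a_3)=\widetilde{C}_\beta\mu_{0,\beta}\,a_1\!\left(1-\frac{(\beta-2)(\beta-4)}{(\beta+2N)(\beta+2N+2)}\right)=D_\beta.\]
The real hurdle is the sign bookkeeping when $\beta$ is large, since $a_k$ may change sign several times as $k$ grows; the non-increasing modulus is what collapses all these comparisons into the single inequality with $a_3$.
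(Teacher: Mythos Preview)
Your argument is correct and follows essentially the same approach as the paper: both start from the product formula of Lemma \ref{lem:eigen}, use that each ratio $\frac{|\beta-2j|}{\beta+2N-2+2j}\le 1$, and split into the three ranges of $\beta$ to identify the extremal index $k$. Your packaging via the non-increasing sequence $|a_k|$ together with a sign count is in fact slightly more streamlined than the paper's, which argues instead through the monotonicity of the full sequence $(\lambda_{k,\beta})$ in each range.
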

\begin{proof}
	Let us first observe that $\lambda_{0,\beta}=0$ for any $\beta>0$. Moreover we also have $\mu_{0,\beta}>0$ and $\widetilde{C}_\beta>0$, thus
	\begin{equation*}
	\lambda_{1,\beta}=\widetilde{C}_\beta\left(1+\frac{\beta}{\beta+2N-2}\right)\mu_{0,\beta}>0
	\end{equation*}
	Moreover, for any $k \ge 2$ we have, recalling \eqref{eq:eigen},
	\begin{align*}
	\lambda_{1,\beta}-\lambda_{k,\beta}&=\widetilde{C}_\beta\frac{\beta}{\beta+2N-2}\mu_{0,\beta}\left(1-(-1)^{k+1}\prod_{j=1}^{k-1}\frac{\beta-2j}{\beta+2N-2+2j}\right)\\&\ge \widetilde{C}_\beta\frac{\beta}{\beta+2N-2}\mu_0^\beta\left(1-\prod_{j=1}^{k-1}\frac{|\beta-2j|}{\beta+2N-2+2j}\right)\ge 0,
	\end{align*}
	since $\frac{|\beta-2j|}{\beta+2N-2+2j}\le 1$ for any $j \le k-1$. Thus we have that $\lambda_{1,\beta}=\max_{k \ge 0}\lambda_{k,\beta}$.\\
	Now let us show relation \eqref{lambdacontr}. First of all, let us work with $\beta<2$. If $\beta \in (0,2)$ then we can show that $(\lambda_{k,\beta})_{k \ge 1}$ is a decreasing sequence. Indeed, we have, since $\beta-2j<0$,
	\begin{multline*}
	\lambda_{k,\beta}-\lambda_{k+1,\beta}=(-1)^{k+1}\widetilde{C}_\beta\frac{\beta}{\beta+2N-2}\left(\prod_{j=1}^{k-1}\frac{\beta-2j}{\beta+2N-2+2j}\right)\\\times\left(1+\frac{2k-\beta}{\beta+2N-2+2k}\right)\mu_0^\beta\ge 0.
	\end{multline*}
	Hence we have that, for $\beta \in (0,2)$, it holds
	\begin{equation*}
	\lambda_{1,\beta}-\lambda_{k,\beta}\ge \lambda_{1,\beta}-\lambda_{2,\beta}=\frac{\beta}{\beta+2N-2}\widetilde{C}_\beta \mu_{0,\beta}\left(1-\frac{2-\beta}{\beta+2N}\right).
	\end{equation*}
	Now let us consider $\beta=2$. We have $\lambda_{k,\beta}=\widetilde{C}_\beta \mu_{0,\beta}$ for any $k \ge 2$ and then
	\begin{equation*}
	\lambda_{1,\beta}-\lambda_{k,\beta}\ge \lambda_{1,\beta}-\widetilde{C}_\beta \mu_{0,\beta}=\frac{1}{N}\widetilde{C}_\beta \mu_{0,\beta}.
	\end{equation*}
	Let us now consider $\beta>2$. Exploiting $\lambda_{2,\beta}$ we have
	\begin{equation*}
	\lambda_{2,\beta}=\widetilde{C}_\beta\left(1-\frac{\beta(\beta-2)}{(\beta+2N)(\beta+2N-2)}\right)\mu_{0,\beta}\le \widetilde{C}_\beta \mu_{0,\beta}.
	\end{equation*}
	Now let us show that the sequence $(\lambda_{k,\beta})_{k \ge 2}$ is increasing. To do this let us observe that
	\begin{multline*}
	\lambda_{k+1,\beta}-\lambda_{k,\beta}=(-1)^{k}\widetilde{C}_\beta\frac{\beta(\beta-2)}{(\beta+2N-2)(\beta+2N)}\left(\prod_{j=2}^{k-1}\frac{\beta-2j}{\beta+2N-2+2j}\right)\\\times\left(1+\frac{2k-\beta}{\beta+2N-2+2k}\right)\mu_0^\beta\ge 0.
	\end{multline*}
	Let us also recall, by Corollary \ref{prop:lim}, that $\lambda_{k,\beta} \to \widetilde{C}_\beta\mu_{0,\beta}$ as $k \to +\infty$ to achieve that, for any $k \ge 2$
	\begin{equation*}
	\lambda_{1,\beta}-\lambda_{k,\beta}\ge \lambda_{1,\beta}-\widetilde{C}_\beta\mu_{0,\beta}=\frac{\beta \mu_{0,\beta}\widetilde{C}_\beta}{\beta+2N-2}.
	\end{equation*}
	Now let us consider $\beta=4$. Then for any $k \ge 3$ it holds $\lambda_{k,\beta}=\widetilde{C}_\beta\mu_{0,\beta}$ obtaining again the previous estimate. Finally, for $\beta>4$, we have
	\begin{equation*}
	\lambda_{3,\beta}=\widetilde{C}_{n,\beta}\left(1+\frac{\beta(\beta+2)(\beta+4)}{(\beta+2N-2)(\beta+2N)(\beta+2N+2)}\right)\mu_{0,\beta}> \widetilde{C}_{n,\beta}\mu_{0,\beta}
	\end{equation*}
	while, with the same strategy as before, we can show that $(\lambda_{k,\beta})_{k \ge 3}$ is a decreasing sequence. Hence we have, for any $k \ge 2$,
	\begin{equation*}
	\lambda_{1,\beta}-\lambda_{k,\beta}\ge \lambda_{1,\beta}-\lambda_{3,\beta}=\frac{\beta \mu_{0,\beta} \widetilde{C}_\beta}{\beta+2N-2}\left(1-\frac{(\beta-2)(\beta-4)}{(\beta+2N)(\beta+2N+2)}\right),
	\end{equation*}
	concluding the proof.
	\end{proof}
\begin{rmk}\label{rmkosc}
	In the proof of the previous Proposition we exploited the initially oscillatory behaviour of the eigenvalues $\lambda_{k,\beta}$ for $\beta \le 4$. To conclude the study of the eigenvalues of $\cI_\beta$, let us show that this initially oscillatory behaviour actually holds for any $\beta>2$ (as opposed to what happens in the case $\beta<0$, see \cite{figalli2015isoperimetry}). Precisely, let $\lambda_{\infty,\beta}:=\widetilde{C}_\beta \mu_{0,\beta}$, where $\widetilde{C}_\beta$ is defined in Lemma \ref{lem:eigen}.  Thus, from Equation \eqref{eq:eigen}, we get
	\begin{equation*}
		\lambda_{k,\beta}-\lambda_{\infty,\beta}=(-1)^{k+1}\mu_{0,\beta}\prod_{j=0}^{k-1}\frac{\beta-2j}{\beta+2N-2+2j}.
	\end{equation*}
	Observing that for $k < \frac{\beta}{2}+1$ it holds $\prod_{j=0}^{k-1}\frac{\beta-2j}{\beta+2N-2+2j} > 0$, we have
	\begin{equation*}
		(-1)^{k}(\lambda_{k,\beta}-\lambda_{\infty,\beta})< 0 , \ \forall k < \frac{\beta}{2}+1,
	\end{equation*}
	that is to say that the first $\lceil \frac{\beta}{2} \rceil$ eigenvalues oscillate around the limit value $\lambda_{\infty,\beta}$. If $\beta=2n$ for some positive integer $n$, then Equation \eqref{eq:eigen} tells us that $\lambda_{k,\beta}=\lambda_{\infty,\beta}$ for any $k\ge \frac{\beta}{2}+1$. Otherwise, for $k>\frac{\beta}{2}+1$, setting $\widetilde{k}(\beta)=\lceil \frac{\beta}{2} \rceil$, one gets
	\begin{multline*}
		\lambda_{k+1,\beta}-\lambda_{k,\beta}=\left((-1)^{\widetilde{k}(\beta)+1}\prod_{j=0}^{\widetilde{k}(\beta)}\frac{\beta-2j}{\beta+2N-2+2j}\right)\\\times \left((-1)^{k-\widetilde{k}(\beta)}\prod_{j=\widetilde{k}(\beta)+1}^{k-1}\frac{\beta-2j}{\beta+2N-2+2j}\right)\left(\frac{2k-\beta}{\beta+2N-2+2k}+1\right).
	\end{multline*}
	Being $\beta-2j<0$ for any $j> \widetilde{k}(\beta)$, it follows that $(-1)^{k-\widetilde{k}(\beta)}\prod_{j=\widetilde{k}(\beta)+1}^{k-1}\frac{\beta-2j}{\beta+2N-2+2j}>0$. Thus, we finally get
	\begin{equation*}
		(-1)^{\widetilde{k}(\beta)+1}(\lambda_{k+1,\beta}-\lambda_{k,\beta})>0,
\end{equation*}
	that implies that the sequence $(\lambda_{k,\beta})_{k > \widetilde{k}(\beta)}$ monotonically (increasing or decreasing depending on the parity of $\widetilde{k}(\beta)$) converges towards $\lambda_{\infty,\beta}$.
\end{rmk}
\section{A Fuglede-type result for $\fG_\beta$}\label{Sec4}
As a first step to prove Theorem \ref{thm:quantrandisop}, we want to obtain a Fuglede-type result for the functional $\fG_\beta$. Let us first recall the definition of nearly-spherical domain (see \cite{fuglede1989stability} for the definition or \cite{fusco2015quantitative} for an almost complete survey on quantitative isoperimetric inequalities).
\begin{defn}\label{def:ns}
	Let us denote by $B$ the unit ball of $\R^N$ centred at the origin. We say that a set $E\subset \R^N$ is nearly spherical if there exists $t \in (0,1)$ and $u \in L^\infty(S^{N-1})$ with $\Norm{u}{L^\infty(S^{N-1})}\le \frac{1}{2}$ such that, up to a translation, $E=E_{t,u}$, where
	\begin{equation}\label{eq:nearlyspherical}
	E_{t,u}=\{x \in \R^N: \ x= \rho z, \ z \in S^{N-1}, \ \rho \in (0,1+tu(z)]\}.
	\end{equation}
\end{defn}
Let us observe that while in the usual definition of nearly spherical set (see \cite{fuglede1989stability}), it is required that $u \in W^{1,\infty}(S^{N-1})$, here we do not need to assume the Lipschitz continuity of the function $u$. Such assumption will be instead needed as we approach the problem with the mixed energy, as we will see later.\\
Concerning nearly-spherical sets, we want to show the following result, where $\fD_\beta(E)=\fG_\beta(E)-\fG_\beta(B)$.
\begin{thm}\label{nrthm}
	Let $\beta>0$. There exist two constants $\varepsilon_0>0$ and $C(N,\beta)>0$ with the property that if $t \in (0,\varepsilon_0)$, $u \in L^\infty(S^{N-1})$ with $\Norm{u}{L^\infty(S^{N-1})}\le \frac{1}{2}$ and $E_{t,u}$ is a nearly spherical set as in \eqref{eq:nearlyspherical}, such that $|E_{t,u}|=\omega_N$ and the baricenter of $E_{t,u}$ is at the origin, then
	\begin{equation*}
	\fD_\beta(E_{t,u})\ge C(N,\beta)t^2\Norm{u}{L^2(S^{N-1})}^2.
	\end{equation*}
\end{thm}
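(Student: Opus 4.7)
The strategy is a second-order Fuglede-type expansion of $\fG_\beta(E_{t,u})$ around the ball, combined with the spectral estimate of Proposition \ref{prop:lowbound}. Setting $\psi:=\chi_{E_{t,u}}-\chi_B$ and $v_B(y):=\int_B|x-y|^\beta dx$, the identity $\chi_E(x)\chi_E(y)-\chi_B(x)\chi_B(y)=\psi(x)\chi_B(y)+\chi_B(x)\psi(y)+\psi(x)\psi(y)$ splits the deficit as
\[
\fD_\beta(E_{t,u})=2\int_{\R^N}v_B(y)\psi(y)\,dy+\int_{\R^N}\int_{\R^N}\psi(x)\psi(y)|x-y|^\beta\,dx\,dy.
\]
Since $\psi$ is supported in the annulus $\{1-t/2\le|y|\le 1+t/2\}$, polar coordinates on each factor rewrite both integrals as integrals over $S^{N-1}$ (respectively over $S^{N-1}\times S^{N-1}$), and Taylor expansion in the radial variables produces expansions uniformly in $u$.

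For the linear term, radial symmetry $v_B(y)=\tilde v(|y|)$ and expansion of $W(1+tu(\omega))$, with $W(s):=\int_1^s\tilde v(\rho)\rho^{N-1}d\rho$, produce a leading contribution $t\tilde v(1)\int_{S^{N-1}}u\,d\sigma$. The volume constraint $|E_{t,u}|=\omega_N$, expanded to second order, gives $\int_{S^{N-1}}u\,d\sigma=-\frac{(N-1)t}{2}\Norm{u}{L^2(S^{N-1})}^2+O(t^2\Norm{u}{L^2}^2)$, which cancels the $\tilde v(1)$ piece and leaves $t^2\tilde v'(1)\Norm{u}{L^2}^2$ up to $o(t^2\Norm{u}{L^2}^2)$. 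For the quadratic integral, the same polar decomposition together with Taylor expansion---combined, in the range $\beta\in(0,1]$, with the H\"older estimate $\bigl||P|^\beta-|Q|^\beta\bigr|\le|P-Q|^\beta$ to avoid the singularity of the derivatives of $|s\omega-r\xi|^\beta$ near the diagonal $\omega=\xi$---yields $t^2\int_{S^{N-1}}\int_{S^{N-1}}u(\omega)u(\xi)|\omega-\xi|^\beta d\sigma(\omega)d\sigma(\xi)+o(t^2\Norm{u}{L^2}^2)$. Decomposing $u=\sum_{k\ge 0}Y_k$ in spherical harmonics and using $\cK_\beta[Y_k]=\theta_{k,\beta}Y_k$, one arrives at
\[
\fD_\beta(E_{t,u})=t^2\sum_{k\ge 0}M_k\Norm{Y_k}{L^2(S^{N-1})}^2+o(t^2\Norm{u}{L^2}^2),\qquad M_k:=\tilde v'(1)+2^{\beta/2}\theta_{k,\beta}.
\]

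The pivotal algebraic step is $M_1=0$. The divergence theorem applied to $\partial_{x_1}|x-y|^\beta$ on $B$ yields $\partial_{y_1}v_B(y)=-\int_{S^{N-1}}|\xi-y|^\beta\xi_1\,d\cH^{N-1}(\xi)$; at $y=e_1$, using $|\xi-e_1|^\beta=2^{\beta/2}(1-\xi_1)^{\beta/2}$ and the splitting $\xi_1=1-(1-\xi_1)$, the two resulting spherical integrals are precisely $\theta_{0,\beta}$ and $\theta_{0,\beta+2}$ (by the very definition of $\theta_{0,\cdot}$), so $\tilde v'(1)=2^{\beta/2}(\theta_{0,\beta+2}-\theta_{0,\beta})$. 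The Gamma-function ratio $\theta_{0,\beta+2}/\theta_{0,\beta}=2(\beta+N-1)/(\beta+2N-2)$, combined with the explicit value $\theta_{1,\beta}=-\beta\theta_{0,\beta}/(\beta+2N-2)$ coming from Proposition \ref{prop:recform}, gives $2^{\beta/2}\theta_{1,\beta}=-\tilde v'(1)$, i.e.\ $M_1=0$. Conceptually, this is forced by translation invariance of $\fG_\beta$: infinitesimal translations of $B$ correspond to harmonics in $\cS_1$ and must therefore annihilate the quadratic form.

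Given $M_1=0$, Proposition \ref{prop:lowbound} furnishes $M_k-M_1=2^{\beta/2}(\theta_{k,\beta}-\theta_{1,\beta})\ge D_\beta/2$ for every $k\ge 2$, while $M_0>0$. The volume expansion bounds $\Norm{Y_0}{L^2}^2\le Ct^2\Norm{u}{L^2}^2$, and the baricenter condition $\int_{E_{t,u}}x\,dx=0$ expanded to first order yields $\bigl|\int_{S^{N-1}}\omega u\,d\sigma\bigr|=O(t\Norm{u}{L^2}^2)$, whence $\Norm{Y_1}{L^2}^2\le Ct^2\Norm{u}{L^2}^2$ (using $\Norm{u}{L^2}^2\le C$ from $\Norm{u}{L^\infty}\le 1/2$). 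Choosing $\varepsilon_0=\varepsilon_0(N,\beta)$ small enough to absorb these losses and the expansion remainder leads to
\[
\fD_\beta(E_{t,u})\ge \frac{t^2 D_\beta}{2}\sum_{k\ge 2}\Norm{Y_k}{L^2}^2-o(t^2\Norm{u}{L^2}^2)\ge C(N,\beta)\,t^2\Norm{u}{L^2(S^{N-1})}^2.
\]
The main obstacle is precisely the cancellation $M_1=0$: although dictated by translation invariance, its explicit verification requires the Gamma-function manipulation above, since Proposition \ref{prop:lowbound} identifies $\lambda_{1,\beta}$ only as the \emph{maximum} of the spectrum of $\cI_\beta$ without locating it relative to $\tilde v'(1)$.
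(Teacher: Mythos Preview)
Your argument is correct and reaches the same conclusion, but by a genuinely different route than the paper. The paper does \emph{not} expand $\fD_\beta$ via $\psi=\chi_E-\chi_B$. Instead it writes $\fG_\beta(E_t)$ in polar coordinates and applies the algebraic identity
\[
2\int_0^a\!\!\int_0^b f\,d\bar r\,d\bar\rho=\int_0^a\!\!\int_0^a+\int_0^b\!\!\int_0^b-\int_a^b\!\!\int_a^b
\]
to the symmetric integrand $f(\bar r,\bar\rho)=(|\bar r-\bar\rho|^2+\bar r\bar\rho|\omega-\xi|^2)^{\beta/2}\bar r^{N-1}\bar\rho^{N-1}$, with $a=1+tu(\omega)$, $b=1+tu(\xi)$. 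The two ``square'' terms collapse (after rescaling) to $\frac{\fG_\beta(B)}{N\omega_N}\int_{S^{N-1}}(1+tu)^{\beta+2N}d\cH^{N-1}$, while the small square $\int_a^b\int_a^b$ produces, at leading order, exactly the seminorm $[u]_\beta^2$. This yields directly
\[
\fD_\beta(E_t)\ge \tfrac{t^2}{2}\bigl(\lambda_{1,\beta}\|u\|_{L^2}^2-[u]_\beta^2\bigr)-Ct^3\|u\|_{L^\infty}^2,
\]
with the coefficient match coming from a separate lemma identifying $\lambda_{1,\beta}=\frac{(\beta+N)(\beta+2N)}{N\omega_N}\fG_\beta(B)$, proved by a rather lengthy computation with the divergence theorem and the Laplace--Beltrami operator on $S^{N-1}$.

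Your second-variation decomposition is more direct: the linear piece produces $t^2\tilde v'(1)\|u\|_{L^2}^2$ and the quadratic piece gives $2^{\beta/2}t^2\sum_k\theta_{k,\beta}\|Y_k\|^2$, so the whole analysis reduces to the single identity $M_1=\tilde v'(1)+2^{\beta/2}\theta_{1,\beta}=0$. Your verification of this via $\tilde v'(1)=-2^{\beta/2}\int_{S^{N-1}}(1-\xi_1)^{\beta/2}\xi_1\,d\sigma=-2^{\beta/2}\theta_{1,\beta}$ (which is immediate once one recognises the integral as $\cK_\beta[Y_1](e_1)$) is considerably shorter than the paper's route to the analogous identity. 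What the paper's decomposition buys is that the Marchaud seminorm $[u]_\beta^2$ appears cleanly and the matching constant is expressed through $\fG_\beta(B)$ rather than through $\tilde v'(1)$; what your approach buys is that the translation-invariance cancellation is isolated as one explicit scalar identity, avoiding the auxiliary lemma on $\lambda_{1,\beta}$ altogether. Both feed into Proposition~\ref{prop:lowbound} in the same way, since your $M_k-M_1=\tfrac12(\lambda_{1,\beta}-\lambda_{k,\beta})\ge D_\beta/2$ is exactly the paper's spectral gap. One small remark: your remainder bookkeeping (``$o(t^2\|u\|_{L^2}^2)$'') is right in spirit but terse; the cubic remainders in the linear term and in the volume expansion are controlled by $Ct^3\|u\|_{L^\infty}\|u\|_{L^2}^2$, which is indeed $o(t^2\|u\|_{L^2}^2)$ using $\|u\|_{L^\infty}\le\tfrac12$.
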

Let us observe that Theorem \ref{nrthm} actually implies Theorem \ref{thm:quantrandisop} in the nearly spherical context. To see this, let us first recall the definition of (non-normalized) Fraenkel asymmetry as, for any measurable set $E$ such that $|E|=\omega_N$, denoting by $B(x)$ the ball of radius $1$ and centre $x \in \R^N$ and by $\Delta$ the symmetric difference,
\begin{equation*}
\delta(E)=\inf_{x \in \R^N}|E \Delta B(x)|
\end{equation*}
Thus, for a nearly spherical set $E_{t,u}$, we have the following chain of inequalities:
\begin{equation*}
\delta(E_{t,u})\le |E_{t,u} \Delta B|=\Norm{tu}{L^1(S^{N-1})} \le t\sqrt{N\omega_N} \Norm{u}{L^2(S^{N-1})},
\end{equation*} 
obtaining the desired implication.\\
Another quantity we will work with is the following semi-norm on $L^\infty(S^{N-1})$: for $\beta>0$ and for any $u \in L^\infty(S^{N-1})$
\begin{equation*}
[u]_\beta^2=\int_{S^{N-1}}\int_{S^{N-1}}|\omega-\xi|^\beta|u(\omega)-u(\xi)|^2d\cH^{N-1}(\omega) d\cH^{N-1}(\xi).
\end{equation*}
Let us observe that for $\beta \in (-N,0)$, this actually reminds of a Besov semi-norm on $S^{N-1}$.\\
The first easy observation we can show gives us the link between this semi-norm and the Marchaud-type fractional integral.
\begin{lem}
	Let $\beta \in (-N,0)$ and $u \in L^\infty(S^{N-1})$. Then
	\begin{equation}\label{sntoint}
	[u]_\beta^2=\int_{S^{N-1}}\cI_\beta[u](\omega)u(\omega)d\cH^{N-1}(\omega)
	\end{equation}
\end{lem}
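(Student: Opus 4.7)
The plan is straightforward: expand the square in the definition of $[u]_\beta^2$ and use Fubini's theorem together with the $\omega \leftrightarrow \xi$ symmetry of the kernel $|\omega-\xi|^\beta$.

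First I would write
\begin{equation*}
|u(\omega)-u(\xi)|^2 = u(\omega)^2 - 2u(\omega)u(\xi) + u(\xi)^2,
\end{equation*}
and split $[u]_\beta^2$ into three double integrals. The integrability issue must be addressed before applying Fubini: since $u \in L^\infty(S^{N-1})$, it is enough to observe that for $\beta \in (-N,0)$ in the range where $\int_{S^{N-1}} |\omega-\xi|^\beta\, d\cH^{N-1}(\xi)$ is finite (uniformly in $\omega$ by rotational invariance), each of the three integrals is absolutely convergent. The integrand of $[u]_\beta^2$ is itself nonnegative, so Tonelli applies unconditionally to it.

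Next, exchanging the roles of $\omega$ and $\xi$ in the third integral gives
\begin{equation*}
\int_{S^{N-1}}\int_{S^{N-1}}|\omega-\xi|^\beta u(\xi)^2\, d\cH^{N-1}(\omega)\, d\cH^{N-1}(\xi) = \int_{S^{N-1}}\int_{S^{N-1}}|\omega-\xi|^\beta u(\omega)^2\, d\cH^{N-1}(\xi)\, d\cH^{N-1}(\omega),
\end{equation*}
so the two pure-square terms coalesce and
\begin{equation*}
[u]_\beta^2 = 2\int_{S^{N-1}} u(\omega)\left(\int_{S^{N-1}}|\omega-\xi|^\beta(u(\omega)-u(\xi))\, d\cH^{N-1}(\xi)\right)d\cH^{N-1}(\omega).
\end{equation*}
Recognizing the inner integral as $\tfrac{1}{2}\cI_\beta[u](\omega)$ from the definition in Section 3 immediately yields \eqref{sntoint}.

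The argument is essentially a one-line symmetry trick, so there is no real obstacle; the only subtlety is to ensure that the Fubini step is legitimate (equivalently, that $\cI_\beta[u](\omega)$ is well-defined as a Lebesgue integral for a.e.\ $\omega$), which follows from $u \in L^\infty(S^{N-1})$ and the local integrability of $|\omega-\xi|^\beta$ on $S^{N-1}$ in the admissible range of $\beta$.
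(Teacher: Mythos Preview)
Your proof is correct and follows essentially the same approach as the paper: the paper writes $|u(\omega)-u(\xi)|^2=(u(\omega)-u(\xi))u(\omega)+(u(\xi)-u(\omega))u(\xi)$, observes that the two resulting double integrals coincide by the $\omega\leftrightarrow\xi$ symmetry of the kernel, and applies Fubini together with the definition of $\cI_\beta$. Your full expansion of the square and subsequent recombination is the same computation written slightly differently, and your remark on integrability (which the paper omits) is a welcome addition.
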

\begin{proof}
	Let us just observe that
	\begin{align*}
	\int_{S^{N-1}}\int_{S^{N-1}}&|\omega-\xi|^\beta|u(\omega)-u(\xi)|^2d\cH^{N-1}(\omega) d\cH^{N-1}(\xi)\\&=\int_{S^{N-1}}\int_{S^{N-1}}|\omega-\xi|^\beta(u(\omega)-u(\xi))u(\omega)d\cH^{N-1}(\omega) d\cH^{N-1}(\xi)\\
	&+\int_{S^{N-1}}\int_{S^{N-1}}|\omega-\xi|^\beta(u(\xi)-u(\omega))u(\xi)d\cH^{N-1}(\omega) d\cH^{N-1}(\xi)
	\end{align*}
	thus Equation \eqref{sntoint} follows from Fubini's theorem and the definition of $\cI_\beta[u]$.
\end{proof}
To show Theorem \ref{nrthm} we need the following preliminary result that will make use of the aforementioned seminorm.
\begin{lem}
	Fix $\beta>0$. Then it holds
	\begin{equation}\label{lambda1asGB}
	\lambda_{1,\beta}=\frac{(\beta+N)(\beta+2N)}{N\omega_N}\fG_\beta(B).
	\end{equation}
\end{lem}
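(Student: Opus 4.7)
The plan is to express both sides of \eqref{lambda1asGB} in terms of the common double sphere integral
\[
\Theta := \int_{S^{N-1}}\int_{S^{N-1}} |\omega-\xi|^{\beta+2}\, d\cH^{N-1}(\omega)\, d\cH^{N-1}(\xi).
\]
First, I would connect $\Theta$ with $\lambda_{1,\beta}$ by applying the eigenvalue relation to the coordinate functions $u_i(\omega):=\omega_i$, $i=1,\dots,N$. Each $u_i$ belongs to $\cS_1$, so $\cI_\beta[u_i]=\lambda_{1,\beta}u_i$; combined with \eqref{sntoint} and the standard identity $\int_{S^{N-1}}\omega_i^2\,d\cH^{N-1}=\omega_N$, this gives $[u_i]_\beta^2=\lambda_{1,\beta}\omega_N$ for each $i$. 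Summing over $i$ and using $\sum_{i=1}^N(\omega_i-\xi_i)^2=|\omega-\xi|^2$ collapses the sum into $\Theta$, producing $\Theta=N\omega_N\lambda_{1,\beta}$.

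Next I would connect $\Theta$ with $\fG_\beta(B)$ via two divergence-theorem reductions. From $\Delta_y|x-y|^{\beta+2}=(\beta+2)(\beta+N)|x-y|^\beta$, the divergence theorem yields
\[
\int_B|x-y|^\beta\,dy = \frac{1}{\beta+N}\int_{\partial B}|x-y|^\beta(1-x\cdot y)\,d\cH^{N-1}(y)
\]
for every $x \in \R^N$. Integrating over $x \in B$ and applying Fubini (the integrand is bounded on $B\times\partial B$) leads to $\fG_\beta(B)=\frac{1}{\beta+N}\int_{\partial B}d\cH^{N-1}(y)\int_B |x-y|^\beta(1-x\cdot y)\,dx$. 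The key remaining claim is the identity, valid for $y\in\partial B$,
\begin{equation}\label{planstar}
\int_B|x-y|^\beta(1-x\cdot y)\,dx = \frac{1}{\beta+2N}\int_{\partial B}|x-y|^{\beta+2}\,d\cH^{N-1}(x),
\end{equation}
which together with the display above gives $\fG_\beta(B)=\Theta/[(\beta+N)(\beta+2N)]$. Combined with $\Theta=N\omega_N\lambda_{1,\beta}$, this is exactly \eqref{lambda1asGB}.

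I expect \eqref{planstar} to be the main obstacle. My plan is to apply the divergence theorem to the vector field $F(x):=\frac{(1-x\cdot y)(x-y)|x-y|^\beta}{\beta+N+1}$. Using $|y|=1$ and the product rule, a direct calculation gives $\nabla_x\cdot F=(1-x\cdot y)|x-y|^\beta$ inside $B$, while on $\partial B$ the outward flux simplifies to $F\cdot x=\frac{(1-x\cdot y)^2|x-y|^\beta}{\beta+N+1}=\frac{|x-y|^{\beta+4}}{4(\beta+N+1)}$, where I have used the spherical identity $1-x\cdot y=|x-y|^2/2$ available when $|x|=|y|=1$. Hence
\[
\int_B(1-x\cdot y)|x-y|^\beta\,dx = \frac{1}{4(\beta+N+1)}\int_{\partial B}|x-y|^{\beta+4}\,d\cH^{N-1}(x).
\]
To conclude I would invoke the explicit formula \eqref{thetagen}: since $\int_{\partial B}|x-y|^\gamma\,d\cH^{N-1}(x)=2^{\gamma/2}\theta_{0,\gamma}$ for $|y|=1$, elementary $\Gamma$-function manipulation using $\Gamma(z+1)=z\Gamma(z)$ yields
\[
\frac{\int_{\partial B}|x-y|^{\beta+4}\,d\cH^{N-1}(x)}{\int_{\partial B}|x-y|^{\beta+2}\,d\cH^{N-1}(x)} = 2\cdot\frac{\theta_{0,\beta+4}}{\theta_{0,\beta+2}} = \frac{4(\beta+N+1)}{\beta+2N},
\]
and substituting this ratio into the previous display produces \eqref{planstar}. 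The essential nontrivial input therefore reduces to the $\Gamma$-function computation; once the ratio is established, the divergence-theorem reductions and Fubini take care of the rest.
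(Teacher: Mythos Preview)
Your argument is correct. The first half---showing $\Theta=N\omega_N\lambda_{1,\beta}$ by summing $[u_i]_\beta^2$ over the coordinate functions---matches the paper exactly. The second half, connecting $\Theta$ to $\fG_\beta(B)$, is where you diverge. The paper writes $L(\xi)=\int_{S^{N-1}}|\omega-\xi|^{\beta+2}d\cH^{N-1}(\omega)$, splits $\nabla\ell(\omega-\xi)$ into tangential and normal components on the sphere, and invokes the Laplace--Beltrami operator to produce two integrals; these are then reduced to $\fG_\beta(B)$ by two further divergence-theorem computations, yielding \eqref{lambda1asGB} without appealing to any explicit eigenvalue formula. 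Your route is more elementary in its vector-field choices (no tangential/normal splitting, no $\Delta_{S^{N-1}}$), but it is not self-contained: the key identity \eqref{planstar} is closed by computing the ratio $\theta_{0,\beta+4}/\theta_{0,\beta+2}$ from the explicit Funk--Hecke formula \eqref{thetagen}. So the trade-off is that the paper's argument is intrinsic (it would survive even if Section~\ref{Sec3} were reorganized), while yours is shorter and avoids spherical differential geometry at the price of importing the $\Gamma$-function expression for $\theta_{0,\gamma}$. Both are perfectly valid; your divergence computation $\nabla_x\cdot F=(1-x\cdot y)|x-y|^\beta$ and the boundary reduction $F\cdot x=|x-y|^{\beta+4}/[4(\beta+N+1)]$ check out, and the $\Gamma$-ratio simplification is correct.
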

\begin{proof}
	Let us first observe that it holds
	\begin{equation}\label{eq:eqGbeta}
	\fG_\beta(B)=\omega_N\int_{B}|x-y|^\beta dx
	\end{equation}
	by exploiting the fact that the integral $\int_{B}|x-y|^\beta dx$ is constant with respect to $y$.\\
	Now let us consider $\cS_1$ the space of spherical $1$-harmonic function. A basis for $\cS_1$ is given by the coordinate functions $\omega \mapsto \omega_i$ for $i=1,\dots,N$. In particular, it holds, by Equation \eqref{sntoint},
	\begin{equation*}
	[\omega_i]^2_{\beta}=\lambda_{1,\beta}\int_{S^{N-1}}\omega_i^2d\cH^{N-1}(\omega).
	\end{equation*}
	 On the other hand, by definition,
	 \begin{equation*}
	 [\omega_i]^2_{\beta}=\int_{S^{N-1}}\int_{S^{N-1}}|\omega-\xi|^\beta|\omega_i-\xi_i|^2d\cH^{N-1}(\omega)d\cH^{N-1}(\xi).
	 \end{equation*}
	 Hence, for any $i=1,\dots, N$, we obtain the identity
	 \begin{equation*}
	 \lambda_{1,\beta}\int_{S^{N-1}}\omega_i^2d\cH^{N-1}(\omega)=\int_{S^{N-1}}\int_{S^{N-1}}|\omega-\xi|^\beta|\omega_i-\xi_i|^2d\cH^{N-1}(\omega)d\cH^{N-1}(\xi).
	 \end{equation*}
	 Thus, summing over $i$, we get
	 \begin{equation*}
	 \lambda_{1,\beta}N\omega_N=\int_{S^{N-1}}\int_{S^{N-1}}|\omega-\xi|^{\beta+2}d\cH^{N-1}(\omega)d\cH^{N-1}(\xi).
	 \end{equation*}
	 Now let us define the function $L:S^{N-1}\to \R$ as
	 \begin{equation*}
	 L(\xi)=\int_{S^{N-1}}|\omega-\xi|^{\beta+2}d\cH^{N-1}(\omega).
	 \end{equation*}
	 Let us also define $\ell(z)=\frac{1}{\beta+2}|z|^{\beta+2}$ so that $\nabla \ell(z)=|z|^\beta z$. This leads to
	 \begin{equation}\label{eq:L}
	 L(\xi)=\int_{S^{N-1}}\nabla \ell(\omega-\xi)\cdot \omega d\cH^{N-1}(\omega)-\int_{S^{N-1}}\nabla \ell(\omega-\xi)\cdot \xi d\cH^{N-1}(\omega).
	 \end{equation}
	 Now let us denote by $\nabla_\tau$ the tangential gradient and with $\frac{\partial}{\partial \nu(\omega)}$ the normal derivative. Hence we can split
	 \begin{equation*}
	 \nabla\ell(\omega-\xi)=\nabla_\tau \ell(\omega-\xi)+\frac{\partial \ell}{\partial \nu(\omega)}(\omega-\xi)\omega.
	 \end{equation*}
	 Thus, also recalling that $|\omega|^2=1$ and $\nabla(\omega \cdot \xi)=\xi$, we have, by Equation \eqref{eq:L},
	 \begin{equation}\label{eq:L2}
	 L(\xi)=\int_{S^{N-1}}\frac{\partial \ell }{\partial \nu(\omega)}(\omega-\xi)(1-\omega \cdot \xi) d\cH^{N-1}(\omega)-\int_{S^{N-1}}\nabla_\tau \ell(\omega-\xi)\cdot \nabla_\tau(\omega \cdot\xi) d\cH^{N-1}(\omega).
	 \end{equation}
	 Now we need to study the two integrals separately. Let us define the functions $\cA,\cB:S^{N-1}\to \R$ as
	 \begin{align*}
	\cA(\xi)&=\int_{S^{N-1}}\frac{\partial \ell }{\partial \nu(\omega)}(\omega-\xi)(1-\omega \cdot \xi) d\cH^{N-1}(\omega)\\
	\cB(\xi)&=\int_{S^{N-1}}\nabla_\tau \ell(\omega-\xi)\cdot \nabla_\tau(\omega \cdot\xi) d\cH^{N-1}(\omega).
	\end{align*}
	Concerning $\cA$, we have, by the divergence theorem,
	\begin{equation*}
	\cA(\xi)=\int_{B}\nabla \ell(\omega-\xi)\cdot\nabla(1-\omega\cdot \xi)d\omega+\int_{B}\Delta \ell(\omega-\xi)(1-\omega \cdot \xi)d\omega.
	\end{equation*}
	Observing that $\Delta \ell(\omega-\xi)=(\beta+N)|\omega-\xi|^\beta$, $\nabla (1-\omega \cdot \xi)=-\xi$ and recalling that $\nabla \ell(\omega-\xi)=|\omega-\xi|^\beta(\omega-\xi)$, it holds
	\begin{align*}
	\cA(\xi)&=\int_{B}|\omega-\xi|^{\beta}(1-\omega\cdot \xi)d\omega+(\beta+N)\int_{B}|\omega-\xi|^\beta(1-\omega \cdot \xi)d\omega\\
	&=(\beta+N+1)\int_{B}|\omega-\xi|^\beta(1-\omega \cdot \xi)d\omega.
	\end{align*}
	Concerning $\cB$, by integration by parts on $S^{N-1}$, we have
	\begin{equation*}
	\cB(\xi)=-\int_{S^{N-1}} \ell(\omega-\xi)\Delta_{S^{N-1}}(\omega \cdot \xi)d\cH^{N-1}(\omega)
	\end{equation*}
	where $\Delta_{S^{N-1}}$ is the Laplace-Beltrami operator on $S^{N-1}$. In particular, since the first eigenvalue of $-\Delta_{S^{N-1}}$ is $N-1$ and it is achieved for functions in $\cS_1$, we have $-\Delta_{S^{N-1}}(\omega \cdot \xi)=(N-1)\omega \cdot \xi$. Thus, by also using the definition of $\ell$,
	\begin{equation*}
	\cB(\xi)=\frac{N-1}{\beta+2}\int_{S^{N-1}}|\omega-\xi|^{\beta+2}\omega \cdot \xi d \cH^{N-1}(\omega).
	\end{equation*}
	Hence, by using the fact that $L(\xi)=\cA(\xi)+\cB(\xi)$, we have
	\begin{equation*}
	L(\xi)=(\beta+N+1)\int_{B}|\omega-\xi|^\beta(1-\omega \cdot \xi)d\omega-\frac{N-1}{\beta+2}\int_{S^{N-1}}|\omega-\xi|^{\beta+2}\omega \cdot \xi d\cH^{N-1}(\omega).
	\end{equation*}
	Integrating both sides on $S^{N-1}$ and using Fubini's theorem we have
	\begin{multline*}
	N\omega_N\lambda_{1,\beta}=(\beta+N+1)\int_{B}\int_{S^{N-1}}|\omega-\xi|^\beta(1-\omega \cdot \xi) d\cH^{N-1}(\xi)d \omega\\-\frac{N-1}{\beta+2}\int_{S^{N-1}}\int_{S^{N-1}}|\omega-\xi|^{\beta+2}\omega \cdot \xi d\cH^{N-1}(\omega)d\cH^{N-1}(\xi).
	\end{multline*}
	Concerning the first integral, we achieve, by using the divergence theorem and recalling \eqref{eq:eqGbeta},
	\begin{align*}
	\int_{S^{N-1}}|\omega-\xi|^\beta(1-\omega \cdot \xi) d\cH^{N-1}(\xi)&=\int_{S^{N-1}}\nabla \ell(\xi-\omega)\cdot \xi d\cH^{N-1}(\xi)\\
	&=\int_{B}\Delta \ell(\xi-\omega)d\xi\\
	&=(\beta+N)\int_{B}|\omega-\xi|^\beta d\xi=\frac{(\beta+N)}{\omega_N}\fG_\beta(B)
	\end{align*}
	and then
	\begin{multline}\label{eq:L4}
	\lambda_{1,\beta}=\frac{(\beta+N)(\beta+N+1)}{N\omega_N}\fG_\beta(B)\\-\frac{N-1}{N\omega_N(\beta+2)}\int_{S^{N-1}}\int_{S^{N-1}}|\omega-\xi|^{\beta+2}\omega \cdot \xi d\cH^{N-1}(\omega)d\cH^{N-1}(\xi).
	\end{multline}
	Now we need to evaluate the second integral in terms of $\fG_\beta(B)$. To do this, let us set $G_1(z)=|z|^\beta$ and, observing that $\nabla G_1(z)=\beta |z|^{\beta-2}z$, we achieve, by using the divergence theorem,
	\begin{align*}
	\int_{B}|\omega-\xi|^{\beta}d\omega&=\frac{1}{\beta}\int_{B}\nabla G_1(\omega-\xi)\cdot(\omega-\xi)d\omega\\
	&=-\frac{1}{\beta}\int_{B} G_1(\omega-\xi)\divg(\omega-\xi)d\omega\\
	&\quad+\frac{1}{\beta}\int_{S^{N-1}}G_1(\omega-\xi)(\omega-\xi)\cdot \omega d\cH^{N-1}(\omega)\\
	&=-\frac{N}{\beta}\int_{B}|\omega-\xi|^{\beta}+\frac{1}{\beta}\int_{S^{N-1}}|\omega-\xi|^{\beta}(\omega-\xi)\cdot \omega d\cH^{N-1}(\omega).
	\end{align*}
	Integrating both sides in $B$, multiplying by $\beta$ and using Fubini's theorem, we have
	\begin{equation*}
	(\beta+N)\fG_\beta(B)=\int_{S^{N-1}}\int_{B}|\xi-\omega|^\beta(\omega-\xi)\cdot \omega d\xi d\cH^{N-1}(\omega).
	\end{equation*}
	Setting $G_2(z)=|z|^{\beta+2}$ and arguing as before, by the divergence theorem, we get
	\begin{align*}
	\int_{B}|\xi-\omega|^\beta(\omega-\xi)\cdot \omega d\xi&=-\frac{1}{\beta+2}\int_{B}\nabla G_2(\xi-\omega)\cdot \omega d\xi\\
	&=-\frac{1}{\beta+2}\int_{S^{N-1}}G_2(\xi-\omega)\xi\cdot \omega d\cH^{N-1}(\xi)\\
	&=-\frac{1}{\beta+2}\int_{S^{N-1}}|\xi-\omega|^{\beta+2}\xi\cdot \omega d\cH^{N-1}(\xi).
	\end{align*}
	Hence we finally obtain
	\begin{equation}\label{eq:L3}
	-(\beta+2)(\beta+N)\fG_\beta(B)=\int_{S^{N-1}}\int_{S^{N-1}}|\xi-\omega|^{\beta+2}\xi\cdot \omega d\cH^{N-1}(\xi)d\cH^{N-1}(\omega).
	\end{equation}
	Formula \eqref{lambda1asGB} follows by using identity \eqref{eq:L3} in Equation \eqref{eq:L4}.
\end{proof}
Now we are ready to prove the main Theorem of this section.
\begin{proof}[Proof of Theorem \ref{nrthm}]
	Since in the rest of the proof $u$ will be fixed, we shall simply write $E_t$ instead of $E_{t,u}$. Let $E_t$ be the nearly spherical set in consideration. Then we can write, by coarea formula,
	\begin{align*}
	\fG_\beta(E_t)&=\int_{E_t}\int_{E_t}|x-y|^\beta dxdy\\
	&=\int_{B}\int_{B}\left|x\left(1+tu\left(\frac{x}{|x|}\right)\right)-y\left(1+tu\left(\frac{y}{|y|}\right)\right)\right|^\beta \\&\qquad \times \left(1+tu\left(\frac{x}{|x|}\right)\right)^{N}\left(1+tu\left(\frac{y}{|y|}\right)\right)^{N} dxdy\\
	&=\int_{S^{N-1}} \int_{S^{N-1}} \int_0^1  \int_0^1 |r\omega(1+tu(\omega))-\rho \xi(1+tu(\xi))|^\beta \\
	&\qquad \times(1+tu(\omega))^{N}(1+tu(\xi))^Nr^{N-1}\rho^{N-1} d\rho drd\cH^{N-1}(\xi)d \cH^{N-1}(\omega).
	\end{align*}
	Concerning the one-dimensional integrals, we can use the change of variables $\bar{r}=r(1+tu(\omega))$ and $\bar{\rho}=\rho(1+tu(\xi))$ to achieve
	\begin{multline*}
	\fG_\beta(E_t)=\int_{S^{N-1}} \int_{S^{N-1}} \int_0^{1+tu(\omega)} \int_0^{1+tu(\xi)} |\bar{r}\omega-\bar{\rho} \xi|^\beta\\ \times \bar{r}^{N-1}\bar{\rho}^{N-1} d\bar{\rho} \, d \bar{r} \, d\cH^{N-1}(\xi) \, d \cH^{N-1}(\omega).
	 \end{multline*}
	 Recalling that $\omega^2=\xi^2=1$, it is easy to check that $|\bar{r}\omega-\bar{\rho}\xi|^2=|\bar{r}-\bar{\rho}|^2+\bar{r}\bar{\rho}|\omega-\xi|^2$ and then
	 \begin{multline}\label{GbetaEt}
	 \fG_\beta(E_t)=\int_{S^{N-1}} \int_{S^{N-1}} \int_0^{1+tu(\omega)} \int_0^{1+tu(\xi)} (|\bar{r}-\bar{\rho}|^2+\bar{r}\bar{\rho}|\omega-\xi|^2)^\frac{\beta}{2}\\ \times \bar{r}^{N-1}\bar{\rho}^{N-1} d\bar{\rho} \, d \bar{r} \, d\cH^{N-1}(\xi) \, d \cH^{N-1}(\omega).
	 \end{multline}
	 Now let us recall that for any $a,b>0$ and any symmetric function $f(\bar{r},\bar{\rho})$ one has
	 \begin{equation*}
	 2\int_0^a\int_0^b f(\bar{r},\bar{\rho})d\bar{\rho}d\bar{r}=\int_0^a\int_0^a f(\bar{r},\bar{\rho})d\bar{\rho}d\bar{r}+\int_0^b\int_0^b f(\bar{r},\bar{\rho})d\bar{\rho}d\bar{r}-\int_a^b\int_a^b f(\bar{r},\bar{\rho})d\bar{\rho}d\bar{r}.
	 \end{equation*}
	 The integrand in \eqref{GbetaEt} is symmetric in $\bar{r}$ and $\bar{\rho}$ thus we can apply the previous decomposition to achieve
	 \begin{align*}
	 \fG_\beta(E_t)&=\int_{S^{N-1}} \int_{S^{N-1}} \int_0^{1+tu(\omega)} \int_0^{1+tu(\omega)} (|\bar{r}-\bar{\rho}|^2+\bar{r}\bar{\rho}|\omega-\xi|^2)^\frac{\beta}{2}\\ &\qquad \times \bar{r}^{N-1}\bar{\rho}^{N-1} d\bar{\rho} \, d \bar{r} \, d\cH^{N-1}(\xi) \, d \cH^{N-1}(\omega)\\
	 &\qquad -\frac{1}{2}\int_{S^{N-1}} \int_{S^{N-1}} \int_{1+tu(\omega)}^{1+tu(\xi)} \int_{1+tu(\omega)}^{1+tu(\xi)} (|\bar{r}-\bar{\rho}|^2+\bar{r}\bar{\rho}|\omega-\xi|^2)^\frac{\beta}{2}\\ &\qquad \times \bar{r}^{N-1}\bar{\rho}^{N-1} d\bar{\rho} \, d \bar{r} \, d\cH^{N-1}(\xi) \, d \cH^{N-1}(\omega):=\cA_1(E_t)+\cA_2(E_t).
	 \end{align*}
	 Let us work with $\cA_1(E_t)$. Applying again the change of variables $\bar{r}=r(1+tu(\omega))$ and $\bar{\rho}=\rho(1+tu(\xi))$, we obtain
	 \begin{multline*}
	 \cA_1(E_t)=\int_{S^{N-1}}(1+tu(\omega))^{\beta+2N} \int_{S^{N-1}} \int_0^{1} \int_0^{1} (|r-\rho|^2+r\rho|\omega-\xi|^2)^\frac{\beta}{2}\\
	 \times r^{N-1}\rho^{N-1} d\rho \, d r \, d\cH^{N-1}(\xi) \, d \cH^{N-1}(\omega).
	 \end{multline*}
	 Observing that $|\omega-\xi|^2=2(1-\omega \cdot \xi)$ we set
	 \begin{align*}
	 K(\omega \cdot \xi)&:=\int_0^{1}\int_0^{1} (|r-\rho|^2+r\rho|\omega-\xi|^2)^\frac{\beta}{2} r^{N-1}\rho^{N-1} d\rho \, d r,\\
	 \gamma&:=\int_{S^{N-1}}K(\omega \cdot \xi)d\cH^{N-1}(\xi)
	 \end{align*}
	 and observe that $\gamma$ is a constant with respect to $\omega$ to achieve
	 \begin{equation*}
	 \cA_1(E_t)=\gamma\int_{S^{N-1}}(1+tu(\omega))^{\beta+2N}d\cH^{N-1}(\omega).
	 \end{equation*}
	 Now we need to evaluate $\gamma$. To do this, let us observe that $\cA_2(B)=0$ and then it holds
	 \begin{equation*}
	 \fG_\beta(B)=\cA_1(B)=N\gamma \omega_N
	 \end{equation*}
	 and then $\gamma=\frac{\fG_\beta(B)}{N \omega_N }$. Thus we finally achieve
	 \begin{equation*}
	 \cA_1(E_t)=\frac{\fG_\beta(B)}{N\omega_N}\int_{S^{N-1}}(1+tu(\omega))^{\beta+2N}d\cH^{N-1}(\omega).
	 \end{equation*}
	 Concerning $\cA_2(E_t)$, we apply the change of variables $1+t\widetilde{r}=\bar{r}$ and $1+t\widetilde{\rho}=\bar{\rho}$ and we obtain
	 \begin{equation*}
	 \cA_2(E_t)=-\frac{t^2}{2}g(t)
	 \end{equation*}
	 where
	 \begin{equation*}
	 g(t)=\int_{S^{N-1}}\int_{S^{N-1}}\int_{u(\omega)}^{u(\xi)}\int_{u(\omega)}^{u(\xi)}f(1+t\widetilde{r},1+t\widetilde{\rho},\omega-\xi)d\widetilde{\rho}\, d\widetilde{r} \, d\cH^{N-1}(\xi) \, d \cH^{N-1}(\omega)
	 \end{equation*}
	 and
	 \begin{equation*}
	 f(r,\rho,p)=(|r-\rho|^2+r\rho|p|^2)^{\frac{\beta}{2}}r^{N-1}\rho^{N-1}.
	 \end{equation*}
	 With some cumbersome calculations, one can show that for $t$ small enough, $\omega, \xi \in S^{N-1}$ and $\beta \ge 1$ it holds
	 \begin{equation*}
	 \der{}{t}f(1+t\widetilde{r},1+t\widetilde{\rho},\omega-\xi)\le C
	 \end{equation*}
	 while, for $\beta<1$, we have
	 \begin{equation*}
	 \der{}{t}f(1+t\widetilde{r},1+t\widetilde{\rho},\omega-\xi)\le C_1+C_2 t^{\beta-1}(1+t\widetilde{r})^{N-1}(1+t\widetilde{\rho})^{N-1}.
	 \end{equation*}
	 In both cases, we can differentiate inside the integral so to get $0 \le g'(t)\le C\Norm{u}{L^\infty(S^{N-1})}^2$. Moreover, by Lagrange theorem, we know that there exists $s \in (0,t)$ such that
	 \begin{equation*}
	 g(t)=g(0)+tg'(s)\le g(0)+Ct\Norm{u}{L^\infty(S^{N-1})}^2.
	 \end{equation*}
	 However, by definition of $g$, we have
	 \begin{equation*}
	 g(0)=\int_{S^{N-1}}\int_{S^{N-1}}|\omega-\xi|^\beta|u(\omega)-u(\xi)|^2d\cH^{N-1}(\omega)\, d\cH^{N-1}(\xi)=[u]_{\beta}^2
	 \end{equation*}
	 and then
	 \begin{equation*}
	 \cA_2(E_t)\ge -\frac{t^2}{2}[u]_\beta^2-Ct^3\Norm{u}{L^\infty(S^{N-1})}^2,
	 \end{equation*}
 for a suitable constant $C>0$ independent of $u$. Finally, we obtain that
	 \begin{equation*}
	 \fG_\beta(E_t)\ge \frac{\fG_\beta(B)}{N\omega_N}\int_{S^{N-1}}(1+tu(\omega))^{\beta+2N}d\cH^{N-1}(\omega)-\frac{t^2}{2}[u]_\beta^2-Ct^3\Norm{u}{L^\infty(S^{N-1})}^2.
	 \end{equation*}
	 On the other hand, it holds
	 \begin{equation*}
	 \fG_\beta(B)=\frac{\fG_\beta(B)}{N\omega_N}\int_{S^{N-1}}d\cH^{N-1}(\omega),
	 \end{equation*}
	 thus
	 \begin{equation*}
	 \fD_\beta(E_t)\ge \frac{\fG_\beta(B)}{N\omega_N}\int_{S^{N-1}}((1+tu(\omega))^{\beta+2N}-1)d\cH^{N-1}(\omega)-\frac{t^2}{2}[u]_\beta^2-Ct^3\Norm{u}{L^\infty(S^{N-1})}^2.
	 \end{equation*}
	 Moreover, we have
	 \begin{equation*}
	 (1+tu(\omega))^{\beta+2N}-1\ge (\beta+2N)tu(\omega)+\frac{(\beta+2N)(\beta+2N-1)}{2}t^2u^2(\omega)-Ct^3\Norm{u}{L^\infty(S^{N+1})}^3
	 \end{equation*}
	 for some constant $C$, hence (being also $\Norm{u}{L^\infty(S^{N-1})}^3 \le \Norm{u}{L^\infty(S^{N-1})}^2$)
	 \begin{multline*}
	 \fD_\beta(E_t)\ge \frac{\fG_\beta(B)(\beta+2N)}{N\omega_N}\left(t\int_{S^{N-1}}u(\omega)d\cH^{N-1}(\omega)+\frac{\beta+2N-1}{2}t^2\Norm{u}{L^2(S^{N-1})}^2\right)\\-\frac{t^2}{2}[u]_\beta^2-Ct^3\Norm{u}{L^\infty(S^{N-1})}^2.
	 \end{multline*}
	 On the other hand, the condition $|E_t|=\omega_N$ implies
	 \begin{equation*}
	 \int_{S^{N-1}}((1+tu(\omega))^N-1)d\cH^{N-1}(\omega)=0
	 \end{equation*}
	 and then there exists a constant $C>0$ independent of $u$ such that
	 \begin{equation*}
	 t\int_{S^{N-1}}u(\omega)d\cH^{N-1}(\omega) \ge -\frac{N-1}{2}t^2\Norm{u}{L^2(S^{N-1})}^2-Ct^3\Norm{u}{L^\infty(S^{N-1})}^3,
	 \end{equation*}
	 hence we get
	 \begin{multline*}
	 \fD_\beta(E_t)\ge \frac{\fG_\beta(B)(\beta+2N)(\beta+N)t^2}{2N\omega_N}\Norm{u}{L^2(S^{N-1})}^2-\frac{t^2}{2}[u]_\beta^2-Ct^3\Norm{u}{L^\infty(S^{N-1})}^2.
	 \end{multline*}
	 Finally, by Equation \eqref{lambda1asGB}, we achieve
	 \begin{equation}\label{Defpass1}
	 \fD_\beta(E_t)\ge \frac{t^2}{2}\left(\lambda_{1,\beta}\Norm{u}{L^2(S^{N-1})}^2-[u]_\beta^2\right)-Ct^3\Norm{u}{L^\infty(S^{N-1})}^2.
	 \end{equation}
	 Now we need to estimate $[u]_\beta^2$ and $\Norm{u}{L^2(S^{N-1})}^2$ in terms of spherical harmonics and $(\lambda_{k,\beta})_{k \ge 0}$. Thus, for each $\cS_k$ let us consider $\cY_k=\{Y_{k,j}\}_{j\le d(k)}$ orthonormal bases of $\cS_k$, where $d(k)=\dim \cS_k$, and $a_{k,j}$ the Fourier coefficients of $u$ to write
	 \begin{equation*}
	 u(\omega)=\sum_{k=0}^{+\infty}\sum_{j=1}^{d(k)}a_{k,j}Y_{k,j}(\omega).
	 \end{equation*}
	 Since $\cY_k$ are orthonormal bases, we get
	 \begin{equation*}
	 \Norm{u}{L^2(S^{N-1})}^2=\sum_{k=0}^{+\infty}\sum_{j=1}^{d(k)}a_{k,j}^2.
	 \end{equation*} 
	 On the other hand, by Equation \eqref{sntoint}, we also have
	 \begin{equation*}
	 [u]_\beta^2=\sum_{k=1}^{+\infty}\sum_{j=1}^{d(k)}\lambda_{k,\beta}a_{k,j}^2.
	 \end{equation*}
	 By using the estimate given in Proposition \ref{prop:lowbound}, we get
	 \begin{align*}
	 \lambda_{1,\beta} \Norm{u}{L^2(S^{N-1})}^2-[u]_\beta^2&\ge \sum_{k=2}^{\infty}\sum_{j=1}^{d(k)}(\lambda_{1,\beta}-\lambda_{k,\beta})a_{k,j}^2\\&\ge  D_\beta \Norm{u}{L^2(S^{N-1})}^2-D_\beta\left(a_0^2+\sum_{j=1}^{n}a_{1,j}^2\right).
	 \end{align*}
	 However, from the volume constraint $|E_t|=\omega_N$ we easily get $a_0^2 \le C t^2\Norm{u}{L^2(S^{N-1})}^2$ and, from the barycenter constraint (the fact that the barycenter of $E_t$ is the origin), we also have $a_{1,j}^2 \le C t^2\Norm{u}{L^2(S^{N-1})}^2$ for some constant $C$. Thus, we can always chose $\varepsilon_0$ small enough to have, for any $t \in (0,\varepsilon_0)$, 
	 \begin{equation*}
	 \lambda_{1,\beta} \Norm{u}{L^2(S^{N-1})}^2-[u]_\beta^2\ge \frac{D_\beta}{2} \Norm{u}{L^2(S^{N-1})}^2.
	 \end{equation*}
	 Using last inequality in Equation \eqref{Defpass1} we achieve
	 \begin{equation*}
	 \fD_\beta(E_t)\ge \frac{t^2D_\beta}{4}\Norm{u}{L^2(S^{N-1})}^2+Ct^3\Norm{u}{L^\infty(S^{N-1})}^2.
	 \end{equation*}
	 Finally, we can consider $\varepsilon_0$ small enough to obtain
	 \begin{equation*}
	 \fD_\beta(E_t)\ge \frac{t^2D_\beta}{8}\Norm{u}{L^2(S^{N-1})}^2,
	 \end{equation*}
	 concluding the proof.
\end{proof}
\begin{rmk}\label{rmk:asympD}
	Since $C(N,\beta)=\frac{D_\beta}{8}$, by Equation \eqref{eq:Dbeta} it holds $\widetilde{C}^\infty_N=8(N+2)(N-1)\omega_{N-1}\Gamma\left(\frac{N-1}{2}\right)2^{\frac{3N+1}{2}}$, implies
	\begin{align*}
		\lim_{\beta \to +\infty}\frac{C(N,\beta)}{\widetilde{C}^\infty_N2^\beta\beta^{-\frac{N+1}{2}}}=1, && \lim_{\beta \to 0}\frac{C(N,\beta)}{\widetilde{C}^0_N \beta}=1,
	\end{align*}
	where
	\begin{align}\label{eq:asympconst}
		\begin{split}
		\widetilde{C}^\infty_N&=8(N+2)(N-1)\omega_{N-1}\Gamma\left(\frac{N-1}{2}\right)2^{\frac{3N+1}{2}},\\ \widetilde{C}^0_N&=(N!)^{-1}(N-1)^2\omega_{N-1}2^{N-2}\Gamma\left(\frac{N-1}{2}\right)
	\end{split}	
	\end{align}
  and we used the asymptotics of the ratio of Gamma functions (see \cite{tricomi1951asymptotic}).
\end{rmk}
\section{Reducing to a nearly-spherical set}\label{Sec5}
In this section we want to prove Theorem \ref{thm:quantrandisop} by using the following strategy:
\begin{itemize}
	\item We show that if the asymmetry is \textit{big} (in the sense that there exists a constant $\mu>0$ for which $\delta(E)>\mu$), then also the deficit is \textit{big} and then Theorem \ref{thm:quantrandisop} follows in such case;
	\item If the asymmetry is \textit{small}, then we show we can construct a nearly-spherical set whose asymmetry and deficit are controlled by means of the asymmetry and the deficit of the original set: for such kind of sets we proved Theorem \ref{nrthm} and then Theorem \ref{thm:quantrandisop} follows.
\end{itemize}
Let us first extend the definition of $\fG_\beta$ to couple of functions in $L^1(\R^N)$. Let $f,g \in L^1(\R^N)$ and define
\begin{equation*}
\fG_\beta(f,g)=\int_{\R^N}\int_{\R^N}f(x)g(y)|x-y|^\beta dx dy.
\end{equation*}
We will actually restrict to the case $f,g \in L^1(\R^N; [0,1])$. Moreover, let us denote $\fG_\beta(f):=\fG_\beta(f,f)$ for any function $f \in L^1(\R^N;[0,1])$, $\fG_\beta(E,H):=\fG_\beta(\chi_{E},\chi_{H})$ for any couple of Borel sets $E,H \subseteq \R^N$ with $\chi_{E}$ and $\chi_{H}$ their respective characteristic functions. It is also obvious that $\fG_\beta(E)=\fG_\beta(\chi_{E})=\fG_\beta(E,E)$ for any Borel set $E \subseteq \R^N$. Let us also denote by $B_r(x)$ the ball centered in $x$ with radius $r>0$, $B_r$ when $x=0$ and $B(x)$ when $r=1$.\\
Before executing our plan, we need some technical estimates on $\fG_\beta$ and a form of weak$^*$ continuity.
\subsection{Estimates on $\fG_\beta$}
The majority of the estimates we are going to show are actually corollary of Riesz inequality (see \cite{riesz1930inegalite}). A first easy corollary of this inequality is given by the case in which one of the involved functions is radially symmetric and decreasing with respect to the modulus. In such case, since its radial decreasing rearrangement coincides with the function itself, the inequality can be restated as follows:
\begin{prop}[\textbf{Riesz inequality for decreasing functions}]\label{prop:Riesz}
	Let $f,g: \R^N \to \R^+$ be two measurable functions and $h:\R^+ \to \R^+$ be a decreasing function. Then, denoting by $f^*$ and $g^*$ the radial decreasing rearrangements of $f$ and $g$, it holds
	\begin{equation}\label{Rineq}
	\int_{\R^N}\int_{\R^N}f(z)g(y)h(|z-y|)dzdy \le \int_{\R^N}\int_{\R^N}f^*(z)g^*(y)h(|z-y|)dzdy.
	\end{equation}
	Moreover, if $h$ is strictly decreasing and $f=g \in L^1(\R^N)$, then equality holds in \eqref{Rineq} if and only if $f=f^*$.
\end{prop}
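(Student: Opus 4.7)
The plan is to treat this statement as a direct specialization of the classical Riesz rearrangement inequality applied to the triple $(f,g,k)$, where $k:\R^N\to\R^+$ is defined by $k(z):=h(|z|)$.

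First, I would observe that since $h$ is nonincreasing on $\R^+$, the radial kernel $k(z)=h(|z|)$ is itself a radially symmetric nonincreasing function on $\R^N$, so it coincides a.e.\ with its own symmetric decreasing rearrangement: $k^*=k$. The classical Riesz rearrangement inequality, in the form
\begin{equation*}
\int_{\R^N}\int_{\R^N}f(z)g(y)k(z-y)\,dz\,dy \;\le\; \int_{\R^N}\int_{\R^N}f^*(z)g^*(y)k^*(z-y)\,dz\,dy,
\end{equation*}
then reduces immediately to \eqref{Rineq} after substituting back $k(z-y)=h(|z-y|)$. No further work is needed for the inequality itself.

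For the characterization of equality, assume $h$ is strictly decreasing and $f=g\in L^1(\R^N)$. Then $k$ is strictly symmetric decreasing about the origin: for $|z_1|<|z_2|$ one has $k(z_1)>k(z_2)$. Invoking the Burchard/Lieb characterization of equality in the Riesz rearrangement inequality (the strict decrease of $k$ and the $L^1$ assumption on $f=g$ are precisely the nondegeneracy hypotheses that rule out the pathological equality cases), we deduce that $f$, $g$ and $k$ must be translates of their own symmetric decreasing rearrangements about a common centre. Since $k$ is already strictly symmetric decreasing about $0$, the common centre is forced to be the origin, hence $f=f^*$ (and $g=g^*=f^*$) a.e. The converse implication is immediate: if $f=f^*$ then $g=g^*$ and the two sides of \eqref{Rineq} coincide by inspection.

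The main technical obstacle is the equality case: the inequality part is essentially a one-line reduction, but the ``only if'' direction requires the nontrivial Burchard characterization of equality in the Riesz rearrangement inequality, which one must cite rather than reprove. Everything else amounts to the observation that the given kernel $h(|\cdot|)$ already equals its own symmetric decreasing rearrangement.
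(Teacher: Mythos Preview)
Your approach coincides with the paper's: the proposition is not proved there but is presented as a direct restatement of the classical Riesz rearrangement inequality (citing Riesz for the inequality and Burchard for the equality cases), exactly as you do.

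One small caveat on your equality argument: the Burchard characterization only forces the centres of $f$, $g$, and $k$ to satisfy the additive compatibility $c_f-c_g=c_k$; with $c_k=0$ and $f=g$ this yields $c_f=c_g$, but it does \emph{not} force $c_f=0$. Indeed, any translate $f(\cdot)=f^*(\cdot-a)$ also achieves equality in \eqref{Rineq}, so the conclusion should properly read ``$f$ is a translate of $f^*$'' rather than ``$f=f^*$''. The paper's statement carries the same slight imprecision, so your argument matches it, but the step ``the common centre is forced to be the origin'' is not justified as written.
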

Let us stress that we considered in the previous Proposition only a particular equality case, while the general one is considered for instance in \cite{burchard1996cases}.\\
This is the main tool adopted in \cite{fusco2019sharp}. However, in our case, $h$ is an increasing function. Thus we need to prove a similar inequality in this setting.
\begin{prop}[\textbf{Riesz inequality for increasing functions}]\label{prop:iRineq}
	Let $f,g: \R^N \to \R^+$ be two measurable functions and $h:\R^+ \to \R^+$ be an increasing function. Then, denoting by $f^*$ and $g^*$ the radial decreasing rearrangements of $f$ and $g$, it holds
	\begin{equation}\label{iRineq}
	\int_{\R^N}\int_{\R^N}f(z)g(y)h(|z-y|)dzdy \ge \int_{\R^N}\int_{\R^N}f^*(z)g^*(y)h(|z-y|)dzdy.
	\end{equation}
	Moreover, if $h$ is strictly increasing and $f=g \in L^1(\R^N)$, then equality holds in \eqref{iRineq} if and only if $f=f^*$.
\end{prop}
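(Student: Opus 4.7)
The strategy is to reduce \eqref{iRineq} to Proposition \ref{prop:Riesz} by writing $h(r) = h(2R) - \tilde h_R(r)$ on a ball $B_R$ large enough to contain all the supports involved, with $\tilde h_R$ decreasing, so that the sense of the inequality flips.

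First, by monotone approximation it suffices to prove \eqref{iRineq} for $f, g$ essentially bounded with compact support. Setting $f_n = \min(f, n)\chi_{B_n}$ and $g_n = \min(g, n)\chi_{B_n}$, one has $f_n \uparrow f$ and $g_n \uparrow g$ pointwise, which forces $f_n^* \uparrow f^*$ and $g_n^* \uparrow g^*$ pointwise (the distribution functions $\mu_{f_n}$ increase to $\mu_f$). Since $h \ge 0$, the monotone convergence theorem lets both sides of \eqref{iRineq} pass to the limit.

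For the bounded, compactly supported case, pick $R > 0$ with $\mathrm{supp}(f), \mathrm{supp}(g) \subseteq B_R$. Rearrangement preserves the measure of the positivity set and $|\{f>0\}|, |\{g>0\}| \le |B_R|$, so also $\mathrm{supp}(f^*), \mathrm{supp}(g^*) \subseteq B_R$. Define $\tilde h_R: [0,\infty) \to [0,\infty)$ by $\tilde h_R(r) = h(2R) - h(r)$ for $r \in [0, 2R]$ and $\tilde h_R(r) = 0$ for $r > 2R$: a non-negative decreasing function on $[0,\infty)$, strictly decreasing on $[0, 2R]$. Whenever $f(z) g(y) \neq 0$ one has $|z - y| \le 2R$, so $h(|z-y|) = h(2R) - \tilde h_R(|z-y|)$, and using $\|f^*\|_{L^1} = \|f\|_{L^1}$ and $\|g^*\|_{L^1} = \|g\|_{L^1}$ one obtains
\[
\int_{\R^N}\int_{\R^N} f(z) g(y) h(|z-y|)\,dz\,dy = h(2R)\|f\|_{L^1}\|g\|_{L^1} - \int_{\R^N}\int_{\R^N} f(z) g(y) \tilde h_R(|z-y|)\,dz\,dy,
\]
and the identical formula with $f^*, g^*$ in place of $f, g$. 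Applying Proposition \ref{prop:Riesz} to the triple $(f, g, \tilde h_R)$ and subtracting the common additive constant gives \eqref{iRineq}.

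For the equality clause, assume $h$ is strictly increasing, $f = g \in L^1(\R^N)$ and both sides of \eqref{iRineq} are finite. The identity above converts equality in \eqref{iRineq} into equality in the decreasing Riesz inequality for $\tilde h_R$. Although $\tilde h_R$ is strictly decreasing only on $[0, 2R]$ and vanishes beyond, the $\tilde h_R$-integrand is already zero outside $\{|z-y| \le 2R\}$ once $f, f^*$ are supported in $B_R$, so the effective range of $\tilde h_R$ in the integrals coincides with the interval on which it is strictly decreasing, and the equality clause of Proposition \ref{prop:Riesz} forces $f = f^*$. The main obstacle I foresee is propagating this equality characterization from compactly supported truncations to a general $f \in L^1$: this would be handled by a compactness/stability argument on the deficit $\int\!\int (f_n(z) f_n(y) - f_n^*(z) f_n^*(y)) h(|z-y|)\,dz\,dy$, together with the observation that finiteness of $\int\!\int f^*(z) f^*(y) h(|z-y|)\,dz\,dy$ with $h$ strictly increasing forces $f^*$ to have fast enough decay at infinity that the compact-support approximation is tight.
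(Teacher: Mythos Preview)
Your proof of the inequality \eqref{iRineq} is correct and essentially the same as the paper's: truncate $f,g$ to bounded compactly supported functions, replace $h$ by a decreasing complement (the paper uses $k_m = m - (h \wedge m)$, you use $\tilde h_R = h(2R) - h$ on the effective range), apply Proposition~\ref{prop:Riesz}, and pass to the limit by monotone convergence.

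For the equality clause there is a genuine gap. Your argument covers compactly supported $f$ (modulo a small fix: $\tilde h_R$ vanishes at $2R$, so it cannot be extended to a strictly decreasing \emph{positive} function on all of $\R^+$; this is remedied by taking the cut-off radius strictly larger than $R$, so that $\tilde h_R$ stays positive on the effective range $[0,2R]$ and can then be extended strictly decreasingly beyond). The real problem is the passage to general $f \in L^1$, which you correctly flag. Your proposed route via stability of the deficit is not viable here: from $D(f_n)\to 0$ you cannot infer $D(f_n)=0$ for any particular $n$, and converting $D(f_n)\to 0$ into ``$f_n$ close to $f_n^*$'' is precisely a quantitative Riesz inequality, i.e.\ a much deeper statement than the one being proved (and indeed the main subject of the paper).

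The paper avoids truncating $f$ altogether by instead truncating $h$ in a way that preserves strict monotonicity. Let $\mu$ be the distributional derivative of $h$ (a positive Radon measure since $h$ is strictly increasing), pick a Borel weight $0<s<1$ $\mu$-a.e.\ with $\int_0^\infty s\,d\mu=1$, and set $h_1(t)=\int_{[0,t)} s\,d\mu$, $h_2(t)=\int_{[0,t)} (1-s)\,d\mu$. Then $h_1+h_2=h$ a.e., both $h_i$ are strictly increasing, and crucially $h_1<1$ is bounded. Applying the already-established inequality \eqref{iRineq} separately to $h_1$ and $h_2$, equality for $h$ forces equality for $h_1$; then $k=1-h_1$ is globally strictly decreasing and positive, so the equality case of Proposition~\ref{prop:Riesz} applies directly to $f\in L^1$ with no truncation of $f$ needed.
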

\begin{proof}
	Let us fix $m,n \in \N$ and define the following functions
	\begin{align*}
	f_n=(f \wedge n)\chi_{B_n} &&
	g_n=(g \wedge n)\chi_{B_n}\\
	h_m=h \wedge m && k_m=m-h_m.
	\end{align*}
	By definition, we have that $f_n,g_n \in L^1(\R^N; \R^+)$ and $k_m$ is decreasing and non-negative, thus we can use Riesz inequality in the form \eqref{Rineq} to obtain
	\begin{equation*}
	\int_{\R^N}\int_{\R^N}f_n(z)g_n(z)k_m(|z-y|)dzdy\le \int_{\R^N}\int_{\R^N}f^*_n(z)g^*_n(z)k_m(|z-y|)dzdy.
	\end{equation*}
	Now, by using the definition of $k_m$ and the fact that any function is equimeasurable with its radial decreasing rearrangement (and $L^1$ is a rearrangement-invariant Banach space), we easily obtain
	\begin{equation*}
	\int_{\R^N}\int_{\R^N}f_n(z)g_n(y)h_m(|z-y|)dzdy \ge \int_{\R^N}\int_{\R^N}f^*_n(z)g^*_n(y)h_m(|z-y|)dzdy.
	\end{equation*}
	Now let us observe that $f_n \uparrow f$ and so $f^*_n \uparrow f^*$ (see \cite[Proposition $7.1.12$]{kufner1977function}), the same holds for $g$ and also $h_m \uparrow h$, thus we can send $m \to +\infty$ and $n \to +\infty$ to achieve equation \eqref{iRineq} by monotone convergence theorem.\\
	Now let us suppose that $h$ is strictly increasing, $f=g \in L^1(\R^N)$ and equality holds in \eqref{iRineq}. We want to show that $f=f^*$ by using the equality case for Equation \eqref{Rineq}. However, since $h$ could be unbouded, we have to introduce some auxiliary functions. Being $h$ monotone, it is locally of bounded variation and then it admits a distributional derivative $\mu$ that is a Radon measure on $[0,+\infty)$. Let us also consider a Borel function $s$ on $[0,+\infty)$ such that $0<s(x)<1$ $\mu$-a. e. and
	\begin{equation*}
		\int_0^{+\infty}s(x)d\mu(x)=1
	\end{equation*}
	and let us define
	\begin{align*}
		h_1(t)=\int_{[0,t)} s(\tau)d\mu(\tau), && h_2(t)=\int_{[0,t)} (1-s(\tau))d\mu(\tau).
	\end{align*}
	First of all, let us observe that $h_1(t)+h_2(t)=h(t)$ for all but countably many $t$.	Being $h$ strictly increasing, we know that $\mu$ is a positive measure. This, together with $0<s<1$ $\mu$-a.e., leads to the fact that $h_i$ is strictly increasing for $i=1,2$. Thus, by \eqref{iRineq} we get
	\begin{equation}\label{eq:hi}
		\int_{\R^N}\int_{\R^N}f(z)f(y)h_i(|z-y|)dzdy \ge \int_{\R^N}\int_{\R^N}f^*(z)f^*(y)h_i(|z-y|)dzdy, \ i=1,2.
	\end{equation}
	Let us suppose that
	\begin{equation*}
		\int_{\R^N}\int_{\R^N}f(z)f(y)h_1(|z-y|)dzdy > \int_{\R^N}\int_{\R^N}f^*(z)f^*(y)h_1(|z-y|)dzdy.
	\end{equation*} 
	Summing the previous inequality with the one in \eqref{eq:hi} for $i=2$ and using the fact that $h_1+h_2=h$ almost everywhere, we get
	\begin{equation*}
		\int_{\R^N}\int_{\R^N}f(z)f(y)h(|z-y|)dzdy > \int_{\R^N}\int_{\R^N}f^*(z)f^*(y)h(|z-y|)dzdy,
	\end{equation*} 
	which is a contradiction with the equality in \eqref{iRineq}. Hence, it holds
	\begin{equation*}
		\int_{\R^N}\int_{\R^N}f(z)f(y)h_1(|z-y|)dzdy= \int_{\R^N}\int_{\R^N}f^*(z)f^*(y)h_1(|z-y|)dzdy.
	\end{equation*}
	Being $\mu$ a positive measure, we have
	\begin{equation*}
		h_1(t)=\int_{[0,t)}s(\tau)d\mu(\tau)< \int_0^{+\infty}s(\tau)d\mu(\tau)=1,
	\end{equation*}	
		 thus we can consider the strictly decreasing positive function $k(\tau)=1-h_1(\tau)$ to achieve
	\begin{equation*}
		\int_{\R^N}\int_{\R^N}f(z)f(y)k(|z-y|)dzdy= \int_{\R^N}\int_{\R^N}f^*(z)f^*(y)k(|z-y|)dzdy,
	\end{equation*}
	concluding that $f=f^*$ by Proposition \ref{prop:Riesz}. 
\end{proof}
Now that we have this result, we can show some lower bounds on the functional $\fG_\beta$. First of all, we have the following lower bound.
\begin{lem}
	Let $\beta>0$. Then, for any positive measurable function $g:\R^N \to \R^+$ it holds
	\begin{equation}\label{eq:lowboundRi}
	\int_{\R^N}g(y)|y|^\beta dy \ge \int_{\R^N}g^*(y)|y|^\beta dy.
	\end{equation}
	In particular, for any finite measure Borel set $H \subseteq \R^N$, it holds
	\begin{equation}\label{eq:lowboundRiBor}
	\int_{H}|x-y|^\beta dy \ge \frac{N \omega_N}{\beta+N}r^{\beta+N},
	\end{equation}
	where $r=\left(\frac{|H|}{\omega_N}\right)^{\frac{1}{N}}$. Finally, for any finite measure Borel sets $G,H \subseteq \R^N$, it holds
	\begin{equation}\label{eq:lowboundGbeta}
	\fG_\beta(G,H)\ge |G|\tau(|H|),
	\end{equation}
	where
	\begin{equation*}
	\tau(r)=\frac{N \omega_N^{1-\frac{\beta+N}{N}}}{\beta+N}r^{\frac{\beta+N}{N}}
	\end{equation*}
\end{lem}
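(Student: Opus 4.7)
The three estimates form a short cascade: \eqref{eq:lowboundRi} is the core rearrangement statement, \eqref{eq:lowboundRiBor} its specialization to a characteristic function (translated), and \eqref{eq:lowboundGbeta} an immediate Fubini consequence. My plan is to dispatch them in that order.

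For \eqref{eq:lowboundRi} the key point is that $|y|^\beta$ is a radially \emph{increasing} weight, and I would exploit this via the layer-cake representation
\begin{equation*}
|y|^\beta = \beta\int_0^{+\infty} u^{\beta-1}\chi_{\{|y|>u\}}(y)\,du.
\end{equation*}
After Fubini, the claim reduces to showing $\int_{\{|y|>u\}} g\,dy \ge \int_{\{|y|>u\}} g^*\,dy$ for every $u>0$. Since $\chi_{B_u}$ coincides with its own symmetric decreasing rearrangement, the Hardy--Littlewood inequality gives $\int_{B_u} g \le \int_{B_u} g^*$; equimeasurability of $g$ and $g^*$ lets me pass to complements (directly if $g\in L^1$, and otherwise via the truncation $g_n = (g\wedge n)\chi_{B_n}$ plus monotone convergence, using $g_n^* \uparrow g^*$). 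As an alternative route one could derive \eqref{eq:lowboundRi} from Proposition \ref{prop:iRineq} by feeding in an approximate identity at the origin as the first factor and passing to the limit, but the layer-cake path is cleaner and avoids any exchange-of-limits issue.

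Once \eqref{eq:lowboundRi} is in hand, the remaining two estimates follow almost at once. For \eqref{eq:lowboundRiBor} I would translate via $z = y-x$: one has $\int_H |x-y|^\beta dy = \int_{\R^N}\chi_{H-x}(z)|z|^\beta dz$, and the symmetric rearrangement of $\chi_{H-x}$ is $\chi_{B_r}$ with $r=(|H|/\omega_N)^{1/N}$, so \eqref{eq:lowboundRi} plus a polar-coordinate computation gives $\int_{B_r} |z|^\beta dz = \frac{N\omega_N}{\beta+N}r^{N+\beta}$. For \eqref{eq:lowboundGbeta}, Fubini writes $\fG_\beta(G,H) = \int_G \big(\int_H |x-y|^\beta dy\big)dx$, and since the lower bound in \eqref{eq:lowboundRiBor} is independent of $x$, it factors out; rewriting $r^{N+\beta} = \omega_N^{-(N+\beta)/N}|H|^{(N+\beta)/N}$ produces exactly $|G|\tau(|H|)$. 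The whole argument is essentially bookkeeping; the only mildly delicate point is the extension of \eqref{eq:lowboundRi} to non-integrable $g$, which the truncation handles routinely.
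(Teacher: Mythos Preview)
Your argument is correct, and parts \eqref{eq:lowboundRiBor} and \eqref{eq:lowboundGbeta} match the paper's treatment essentially verbatim. For the core inequality \eqref{eq:lowboundRi}, however, you take a genuinely different route: you use the layer-cake representation of $|y|^\beta$ together with the Hardy--Littlewood inequality (applied with the radial function $\chi_{B_u}$), whereas the paper follows precisely the ``alternative route'' you mention and dismiss --- it invokes Proposition~\ref{prop:iRineq} with $f=\frac{1}{\omega_N\varepsilon^N}\chi_{B_\varepsilon}$ as an approximate identity, then passes to the limit $\varepsilon\to 0$ via dominated convergence (after the same truncation $g_n=(g\wedge n)\chi_{B_n}$ to reduce to bounded compactly supported $g$). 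Your approach is more elementary and self-contained, since it needs only Hardy--Littlewood rather than the full Riesz rearrangement machinery behind Proposition~\ref{prop:iRineq}; the paper's approach, on the other hand, is the natural one in context because it reuses a tool just established and keeps the narrative unified.
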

\begin{proof}
	Let us first observe that arguing as before on the sequence of functions $g_n=(g \wedge n)\chi_{B_n}$, we just have to show inequality \eqref{eq:lowboundRi} on bounded functions of compact support. Let us then suppose there exists a constant $M>0$ such that $g \le M$ and ${\rm supp \,}g \subseteq B_M$. Fix $\varepsilon \in (0,1)$ and define $f(z)=\frac{1}{\omega_N\varepsilon^N}\chi_{B_\varepsilon}(z)$, observing that $f^*\equiv f$. Thus, choosing $h(t)=t^\beta$ for $\beta>0$, we can use inequality \eqref{iRineq} to achieve
	\begin{equation*}
	\int_{B_M}g(y)\fint_{B_\varepsilon}|y-z|^\beta dz dy \ge \int_{B_M}g^*(y)\fint_{B_\varepsilon}|y-z|^\beta dz dy.
	\end{equation*}
	By dominated convergence theorem (that we can use being $g$ and $g^*$ bounded and so $|y-z|^\beta$, since $y \in B_M$ and $z \in B_\varepsilon\subset B_1$), we have, taking $\varepsilon \to 0$,
	\begin{equation*}
	\int_{B_M}g(y)|y|^\beta dy \ge \int_{B_M}g^*(y)|y|^\beta dy,
	\end{equation*}
	which is inequality \eqref{eq:lowboundRi}. Formula \eqref{eq:lowboundRiBor} follows by considering $g(y)=\chi_{H}(x+y)$. Indeed in such case we have $g^*(y)=\chi_{B_r}(y)$ and then
	\begin{equation*}
	\int_{H}|x-y|^\beta dy=\int_{\R^N}g(y)|y|^\beta dy \ge \int_{\R^N}g^*(y)|y|^\beta dy=\int_{B_r}|y|^\beta dy.
	\end{equation*}
	Finally, integrating both sides of Equation \eqref{eq:lowboundRiBor} on $G$ we get inequality \eqref{eq:lowboundGbeta}.
\end{proof}
Now we let us show a similar lower bound on the functional $\fG_\beta$ applied on functions.
\begin{lem}\label{lem:functionminimum}
	Let $\beta>0$. Then, for any function $g \in L^1(\R^N;[0,1])$ it holds
	\begin{equation}\label{eq:lowboundRi2}
	\int_{\R^N}g(y)|y|^\beta dy \ge \frac{N\omega_N}{N+\beta}r^{N+\beta},
	\end{equation}
	where $r=\left(\frac{\Norm{g}{L^1(\R^N)}}{\omega_N}\right)^{\frac{1}{N}}$. Moreover, it holds
	\begin{equation}\label{eq:lowboundGbeta2}
	\fG_\beta(g)\ge \fG_\beta(B_r)
	\end{equation}
	and equality holds if and only if $g$ is the characteristic function of a ball.
\end{lem}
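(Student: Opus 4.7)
For \eqref{eq:lowboundRi2} I would run a one-line bathtub argument. Set $\phi = g - \chi_{B_r}$; since $g \in [0,1]$ and $\chi_{B_r} \in \{0,1\}$, necessarily $\phi \ge 0$ on $B_r^c$ and $\phi \le 0$ on $B_r$, while $\int \phi = 0$. On $B_r^c$ the weight $|y|^\beta$ is $\ge r^\beta$, while on $B_r$ it is $\le r^\beta$, so
\[
\int g(y)|y|^\beta\,dy - \int_{B_r}|y|^\beta\,dy = \int \phi(y)|y|^\beta\,dy \ge r^\beta \int \phi(y)\,dy = 0.
\]
The very same computation works verbatim with $|y|^\beta$ replaced by any non-negative radial nondecreasing weight $w(y)=\tilde w(|y|)$, giving the generalized bathtub bound $\int g\,w \ge \int_{B_r} w$, with strict inequality whenever $g\neq\chi_{B_r}$ and $\tilde w$ is strictly increasing. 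I would record this auxiliary form explicitly since it is the engine of the second half.

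For \eqref{eq:lowboundGbeta2} the plan is to reduce to $g$ radial decreasing and then iterate the generalized bathtub bound with well-chosen radial increasing weights. First, Proposition \ref{prop:iRineq} applied to the strictly increasing kernel $t\mapsto t^\beta$ yields $\fG_\beta(g) \ge \fG_\beta(g^*)$, where $g^*$ is the symmetric decreasing rearrangement. Introduce the potential
\[
F_h(y) := \int_{\R^N} h(x)|x-y|^\beta\,dx,
\]
and note the key fact: for any radial non-negative $h\in L^1\cap L^\infty$ with $h\not\equiv 0$, $F_h$ is radial and strictly increasing in $|y|$. Indeed, $\Delta_y |x-y|^\beta = \beta(\beta+N-2)|x-y|^{\beta-2}>0$ for $\beta>0$ and $N\ge 2$, so $F_h$ is strictly subharmonic; writing $F_h(y)=\tilde F(|y|)$, the inequality $(\rho^{N-1}\tilde F'(\rho))'>0$ together with $\tilde F'(0^+)=0$ (which follows from $F_h\in C^1$ and radial symmetry) yields $\tilde F'(\rho)>0$ on $(0,+\infty)$.

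With $F_{g^*}$ radial strictly increasing, the generalized bathtub bound applied to $g^*$ with weight $F_{g^*}$ reads
\[
\fG_\beta(g^*) = \int g^*(y)F_{g^*}(y)\,dy \ge \int \chi_{B_r}(y)F_{g^*}(y)\,dy = \int_{B_r} F_{g^*}(y)\,dy.
\]
By Fubini the right-hand side equals $\int g^*(x)F_{\chi_{B_r}}(x)\,dx$; since $F_{\chi_{B_r}}$ is again radial and strictly increasing, a second application of the generalized bathtub bound (with weight $F_{\chi_{B_r}}$) gives
\[
\int g^*(x)F_{\chi_{B_r}}(x)\,dx \ge \int_{B_r} F_{\chi_{B_r}}(x)\,dx = \fG_\beta(\chi_{B_r}).
\]
Chaining produces $\fG_\beta(g)\ge \fG_\beta(g^*)\ge \fG_\beta(B_r)$.

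For the equality case, if $\fG_\beta(g)=\fG_\beta(B_r)$ then both bathtub steps are tight; the strict monotonicity of $F_{g^*}$ forces $g^*=\chi_{B_r}$ a.e., so $g$ is equimeasurable with $\chi_{B_r}$ and hence $g=\chi_F$ for some Borel $F$ with $|F|=|B_r|$. Equality in the initial application of Proposition \ref{prop:iRineq} to $\chi_F$, combined with the refined equality case of the Riesz rearrangement inequality for sets, then forces $F$ to be a ball, giving the characterization in the statement. The main obstacle in the whole plan is the strict radial monotonicity of $F_h$; this is the only step where the hypothesis $\beta>0$ enters in a nontrivial way, and everything else is a two-fold iteration of the elementary bathtub inequality of the first part.
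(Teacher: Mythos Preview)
Your proof is correct and follows the same overall architecture as the paper: the bathtub argument for \eqref{eq:lowboundRi2} is identical, and for \eqref{eq:lowboundGbeta2} both you and the paper first symmetrize via Proposition~\ref{prop:iRineq} and then perform a two-step reduction from $g^*$ to $\chi_{B_r}$ by applying the bathtub inequality with the radial weight $F_h(y)=\int h(x)|x-y|^\beta\,dx$ (the paper writes this as $\zeta_f(\rho)$ via coarea).

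The genuine difference lies in how the radial monotonicity of $F_h$ is established. The paper proves that $\zeta_f$ is nondecreasing by applying the Riesz rearrangement inequality to carefully chosen pairs of thin annuli, and then obtains \emph{strict} monotonicity only in the special case $f=\chi_{B_R}$ by an explicit computation of $\psi'(t)>0$. You instead observe that $\Delta_y|x-y|^\beta=\beta(\beta+N-2)|x-y|^{\beta-2}>0$ for $\beta>0$ and $N\ge 2$, so $F_h$ is strictly subharmonic whenever $h\ge 0$, $h\not\equiv 0$; radial strict monotonicity then follows from the divergence theorem identity $N\omega_N\rho^{N-1}\tilde F'(\rho)=\int_{B_\rho}\Delta F_h>0$. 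Your route is shorter and yields strict monotonicity for all radial $h$ at once, which streamlines the equality discussion; the paper's route stays entirely within rearrangement techniques and avoids any differentiability considerations. For $0<\beta<2$ your argument needs the (standard) observation that $F_h\in C^1$ and $\Delta F_h\in L^1_{\mathrm{loc}}$ so that the divergence theorem applies, but this is routine given $h\in L^1\cap L^\infty$.
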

\begin{proof}
	To prove inequality \eqref{eq:lowboundRi2}, we just observe that
	\begin{align*}
	\int_{\R^N}g(y)|y|^\beta dy-\int_{B_r}|y|^\beta dy&=\int_{\R^N\setminus  B_r}g(y)|y|^\beta dy-\int_{B_r}(1-g(y))|y|^\beta dy\\
	&\ge \int_{\R^N\setminus  B_r}g(y)r^\beta dy-\int_{B_r}(1-g(y))r^\beta dy\\
	&=r^\beta(\Norm{g}{L^1}-\omega_N r^N)=0.
	\end{align*}
	Concerning inequality \eqref{eq:lowboundGbeta2}, we need to introduce some other tools. For any function $\theta \in L^1(\R^N;[0,1])$ we define $r(\theta)=\left(\frac{\Norm{g}{L^1(\R^N)}}{\omega_N}\right)^{\frac{1}{N}}$ and $\widehat{\theta}=\chi_{B_{r(\theta)}}$. We claim that:
	\begin{itemize}
		\item For any $f,\theta \in L^1(\R^N;[0,1])$ such that $f=f^*$ and $\theta=\theta^*$, it holds
		\begin{equation*}
		\fG_\beta(f,\theta)\ge \fG_\beta(f,\widehat{\theta})
		\end{equation*}
		and, if $f=\widehat{f}$, equality holds if and only if $\theta=\widehat{\theta}$.
	\end{itemize}
If we show this claim, we have both inequality \eqref{eq:lowboundGbeta2} and the equality condition. Indeed, let us observe that $\widehat{g}=\widehat{g^*}$ since $r(g)=r(g^*)$ and, by using Riesz inequality, we have
\begin{equation*}
\fG_\beta(g)=\fG_\beta(g,g)\ge \fG_\beta(g^*,g^*)\ge\fG_\beta(\widehat{g},g^*)\ge \fG_\beta(\widehat{g},\widehat{g})=\fG_\beta(B_{r}).
\end{equation*}
Moreover, if equality holds, then $\fG_\beta(g,g)=\fG_\beta(g^*,g^*)$, which is true if and only if $g=g^*$ by and the characterization of the equality case in Riesz inequality (see Proposition \ref{prop:iRineq}), and $\fG_\beta(\widehat{g},g^*)=\fG_\beta(\widehat{g},\widehat{g})$, which is true if and only if $g^*=\widehat{g}$; thus $g=\widehat{g}=\chi_{B_r}$.\\
Now let us prove the claim. To do this, let us observe that for any $t>0$ and any $|x_1|=|x_2|=t$, one has $\int_{B}|y-x_1|^\beta dy=\int_{B}|y-x_2|^\beta dy$. Thus the function
\begin{equation}\label{eq:psi}
\psi(t)=\int_{B}|y-x|^\beta dy, \quad |x|=t
\end{equation}
is well defined. Since we can choose any $x \in \R^N$, such that $|x|=t$, let us consider $x=te_1$ to write
\begin{equation*}
\psi(t)=\int_{B}|y-te_1|^\beta dy.
\end{equation*}
From the last formulation, it is easy to observe that $\psi(t)$ is a $C^1$ function and, for any $t>0$,
\begin{equation*}
\psi'(t)=\int_{B}\beta|y-te_1|^{\beta-2}(t-y_1)dy\ge \beta\int_{B}|y_1-t|^{\beta-2}(t-y_1)dy > 0.
\end{equation*}
Now, for any function $f \in L^1(\R^N;[0,1])$ with $f=f^*$, let us define, for $\rho>0$,
\begin{equation*}
\zeta_f(\rho)=\fint_{\partial B_\rho}\int_{\R^N}f(z)|z-y|^\beta dz d\cH^{N-1}(y)
\end{equation*}
and let us show that $\zeta_f$ is increasing. To do this, let us fix $\rho_1<\rho_2$ and define $g=\chi_{B_{\rho_1}}+\chi_{B_{\rho_2+\varepsilon}}-\chi_{B_{\rho_2}}$ for some $\varepsilon>0$. Let $\delta(\varepsilon)>0$ be solution of the equation
\begin{equation*}
(\rho_1+\delta(\varepsilon))^N=\rho_1^N+(\rho_2+\varepsilon)^N-\rho_2^N
\end{equation*}
and observe that $\lim_{\varepsilon \to 0^+}\delta(\varepsilon)=0$. It is also easy to check that
\begin{equation*}
g^*=g+(\chi_{B_{\rho_1+\delta}}-\chi_{B_{\rho_1}})-(\chi_{B_{\rho_2+\varepsilon}}-\chi_{B_{\rho_2}}).
\end{equation*}
By Riesz inequality we have $\fG_\beta(f,g)\ge \fG_\beta(f,g^*)$. By explicitly writing this relation we achieve and denoting by $A_{\rho_1}=B_{\rho_1+\delta}\setminus B_{\rho_1}$ and $A_{\rho_2}=B_{\rho_2+\varepsilon}\setminus B_{\rho_2}$, we have
\begin{equation*}
\int_{A_{\rho_1}}\int_{\R^N}f(z)|z-y|^\beta dz dy \le \int_{A_{\rho_2}}\int_{\R^N}f(z)|z-y|^\beta dz dy.
\end{equation*}
However, by definition of $\delta$, we have that $|A_{\rho_1}|=|A_{\rho_2}|$ and thus we can divide both term of the previous inequality by this measure and send $\varepsilon \to 0$ to get $\zeta_f(\rho_1)\le \zeta_f(\rho_2)$. Thus we have shown that $\zeta_f$ is increasing.\\
The case $f=\widehat{f}$ is more interesting, since we can actually show that $\zeta_f$ is strictly increasing. Indeed, in such case, $f=\chi_{B_R}$ for some $R>0$ and then we have
\begin{align*}
\zeta_f(\rho)&=\fint_{\partial B_\rho}\int_{B_R}|z-y|^{\beta}dzd\cH^{N-1}(y)=R^N\fint_{\partial B_\rho}\int_{B}|R\omega-y|^{\beta}d\omega d\cH^{N-1}(y)\\
&=R^{N+\beta}\fint_{\partial B_\frac{\rho}{R}}\int_{B}\left|\omega-\eta\right|^{\beta}d\omega d\cH^{N-1}(\eta)=R^{N+\beta}\psi\left(\frac{\rho}{R}\right).
\end{align*}
Now let us assume $f=f^*$ and $\theta=\theta^*$ in $L^1(\R^N;[0,1])$ and let us fix $r=r(\theta)$. Since $\theta=\theta^*$, we know that $\theta$ is radially symmetric, thus let us set $\theta(\rho)=\theta(y)$ as $|y|=\rho$. Then, by coarea formula, we get
\begin{align*}
\fG_\beta(f,\theta)-\fG_\beta(f,\hat{\theta})&=\int_0^{+\infty}\int_{\partial B_\rho}\int_{\R^N}f(z)(\theta(\rho)-\hat{\theta}(\rho))|z-y|^{\beta}dzd\cH^{n-1}(y)d\rho\\
&=\int_0^{+\infty}\zeta_f(\rho)(\theta(\rho)-\hat{\theta}(\rho))N\omega_N \rho^{N-1}d\rho\\
&=\int_r^{+\infty}\zeta_f(\rho)\theta(\rho)N\omega_N \rho^{N-1}d\rho-\int_0^{r}\zeta(\rho)(1-\theta(\rho))N\omega_N \rho^{N-1}d\rho\\
&\ge \zeta_f(r)\left(\int_r^{+\infty}\theta(\rho)N\omega_N \rho^{N-1}d\rho-\int_0^{r}(1-\theta(\rho))N\omega_N \rho^{N-1}d\rho\right)
\\&=\zeta_f(r)\left(\Norm{1-\theta}{L^1(B_r)}-\Norm{\theta}{L^1(\R^N \setminus B_r)}\right)=0,
\end{align*}
concluding that $\fG_\beta(f,\theta)\ge \fG_\beta(f,\hat{\theta})$. On the other hand, if $f=\widehat{f}$ and $\fG_\beta(f,\theta)=\fG_\beta(f,\hat{\theta})$, it holds
\begin{multline*}
\zeta_f(r)\left(\int_r^{+\infty}\theta(\rho)N\omega_N \rho^{N-1}d\rho-\int_0^{r}(1-\theta(\rho))N\omega_N \rho^{N-1}d\rho\right)\\=\int_0^{+\infty}\zeta_f(\rho)(\theta(\rho)-\hat{\theta}(\rho))N\omega_N \rho^{N-1}d\rho
\end{multline*}
which is true, being $\zeta_f$ strictly increasing, if and only if $\theta=\widehat{\theta}$, concluding the proof of the claim.
\end{proof}
Now let us observe that, in the case we have an invertible transport map between two sets, the difference of the energy can be controlled in terms of the transport map.
\begin{lem}\label{lem:diff}
	Let $E_1,E_2 \subseteq B_R$ for some $R>0$ be two Borel sets such that $|E_1|=|E_2|$ and $\beta>0$. Suppose $\Phi:E_1 \to E_2$ is an invertible transport map. Then there exists a constant $C$ depending only on $R$, $\beta$ and $N$ such that for any Borel set $E_3 \subseteq B_R$ it holds
	\begin{equation}\label{eq:transcontr}
	|\fG_\beta(E_1,E_3)-\fG_\beta(E_2,E_3)|\le C(R,\beta,N) |E_3|^{\alpha}\int_{E_1}|y-\Phi(y)|dy,
	\end{equation}
where
\begin{equation}\label{eq:alphadef}
	\alpha=\min\left\{1,1+\frac{\beta-1}{N}\right\}.
\end{equation}
\end{lem}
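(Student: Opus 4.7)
The plan is to use the transport-map change of variables \eqref{changeofvar} to rewrite the $\fG_\beta(E_2,E_3)$ integral as an integral over $E_1$, and then control the resulting integrand pointwise. More precisely, since $\Phi$ is a transport map between $\chi_{E_1}$ and $\chi_{E_2}$, for each fixed $z$ we have $\int_{E_2}|x-z|^\beta\,dx=\int_{E_1}|\Phi(y)-z|^\beta\,dy$, so by Fubini
\begin{equation*}
\fG_\beta(E_1,E_3)-\fG_\beta(E_2,E_3)=\int_{E_1}\int_{E_3}\bigl(|y-z|^\beta-|\Phi(y)-z|^\beta\bigr)\,dz\,dy.
\end{equation*}
Everything is then reduced to estimating $\int_{E_3}\bigl||y-z|^\beta-|\Phi(y)-z|^\beta\bigr|\,dz$ uniformly in $y\in E_1$, and this splits naturally into two regimes corresponding to the two branches of $\alpha$.

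For $\beta\ge 1$, the function $s\mapsto s^\beta$ is Lipschitz on $[0,2R]$ with constant $\beta(2R)^{\beta-1}$, and since $y,\Phi(y),z\in B_R$ we have $|y-z|,|\Phi(y)-z|\le 2R$. Thus by the mean-value theorem together with the reverse triangle inequality,
\begin{equation*}
\bigl||y-z|^\beta-|\Phi(y)-z|^\beta\bigr|\le \beta(2R)^{\beta-1}|y-\Phi(y)|,
\end{equation*}
and integrating in $z$ over $E_3$ simply multiplies by $|E_3|$, giving the bound with $\alpha=1$.

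For $\beta<1$ the derivative $\beta s^{\beta-1}$ is singular at $s=0$, so one cannot bound it uniformly. Instead I would use the integral form of the mean value theorem along the segment $\gamma_t=(1-t)y+t\Phi(y)$:
\begin{equation*}
\bigl||y-z|^\beta-|\Phi(y)-z|^\beta\bigr|\le \beta|y-\Phi(y)|\int_0^1|\gamma_t-z|^{\beta-1}\,dt.
\end{equation*}
For each fixed $t$ the function $z\mapsto|\gamma_t-z|^{\beta-1}$ is (after translation) radially symmetric and decreasing, and its radial decreasing rearrangement is itself; applying the Hardy--Littlewood inequality to $\chi_{E_3}(\cdot+\gamma_t)$ and $|\cdot|^{\beta-1}$ (both of which satisfy the required hypotheses since $\beta-1>-N$),
\begin{equation*}
\int_{E_3}|\gamma_t-z|^{\beta-1}\,dz\le \int_{B_s}|z|^{\beta-1}\,dz=\frac{N\omega_N}{N+\beta-1}s^{N+\beta-1}
\end{equation*}
where $s=(|E_3|/\omega_N)^{1/N}$. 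This produces a factor $|E_3|^{(N+\beta-1)/N}=|E_3|^{1+(\beta-1)/N}$, independent of $t$, and integrating the segment estimate in $t$ and then in $y\in E_1$ yields the claimed bound with $\alpha=1+(\beta-1)/N$.

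The only delicate point is the $\beta<1$ case, where one must handle the local non-integrable growth of $|z|^{\beta-1}$ at the origin; the Hardy--Littlewood step is what pays for it, and the resulting power $1+(\beta-1)/N$ of $|E_3|$ is exactly what comes out of the radial integral. Combining the two regimes gives $\alpha=\min\{1,1+(\beta-1)/N\}$, matching \eqref{eq:alphadef}, with a constant $C(R,\beta,N)$ depending only on $R,\beta,N$ as required.
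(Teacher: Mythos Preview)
Your proof is correct and follows essentially the same approach as the paper: split into the cases $\beta\ge 1$ (uniform Lipschitz bound on $[0,2R]$) and $\beta<1$ (mean-value estimate followed by the Hardy--Littlewood rearrangement bound $\int_{E_3}|w-z|^{\beta-1}\,dz\le\int_{B_\rho}|z|^{\beta-1}\,dz$). The only minor variation is that for $\beta<1$ the paper assumes WLOG $\fG_\beta(E_1,E_3)\ge\fG_\beta(E_2,E_3)$ and uses the one-point concavity bound $|y-z|^\beta-|\Phi(y)-z|^\beta\le\beta|\Phi(y)-z|^{\beta-1}|y-\Phi(y)|$, whereas you bound the absolute value directly via the integral mean-value theorem along the segment $\gamma_t$; both routes lead to the same constant and the same exponent $\alpha$.
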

\begin{proof}
	Being $\Phi$ an invertible transport map, we can suppose, without loss of generality, that $\fG_\beta(E_1,E_3)\ge \fG_\beta(E_2,E_3)$.\\
	Let us first assume $\beta \ge 1$. Then, by Lagrange's theorem, we get
	\begin{equation}\label{pass2}
		|y-z|^\beta-|\Phi(y)-z|^\beta \le \beta(2R)^{\beta-1}|y-\Phi(y)|.
	\end{equation}
	Integrating inequality \eqref{pass2} over $E_3$ and $E_1$ we complete the proof in the case $\beta\ge 1$.\\
	Now let us consider $\beta \in (0,1)$. Fix $y \in E_1$ and observe that, if $|y-z|^\beta-|\Phi(y)-z|^\beta \ge 0$, it holds
	\begin{equation*}
		|y-z|^\beta-|\Phi(y)-z|^\beta \le \beta|\Phi(y)-z|^{\beta-1}|y-\Phi(y)|.
	\end{equation*}
	Integrating over $E_3$ to get
	\begin{align*}
		\int_{E_3}|y-z|^\beta dz-\int_{E_3}|\Phi(y)-z|^\beta dz &\le \beta|y-\Phi(y)|\int_{E_3}|\Phi(y)-z|^{\beta-1}dz\\
		&\le \beta|y-\Phi(y)|\int_{B_\rho}|w|^{\beta-1}dw\\
		&=|y-\Phi(y)| \frac{\beta N}{N+\beta-1}\omega_N\rho^{N+\beta-1}\\
		&=|y-\Phi(y)| \frac{\beta N\omega_N^{\frac{1-\beta}{N}}}{N+\beta-1}|E_3|^{1+\frac{\beta-1}{N}},
	\end{align*}
	where we considered $\rho>0$ such that $|E_3|=\rho^N\omega_N$. Integrating both sides of the last inequality in $E_1$ we conclude the proof.
\end{proof}
\subsection{The functional $\fG_\beta$ and the weak$^*$ convergence in $L^\infty$}
As we stated before, we need to obtain some form of weak$^*$ continuity. However, $L^1$ is not the dual of any space. To avoid this problem, let us observe that we fixed the range of the functions: indeed, we have by definition that $L^1(\R^N;[0,1]) \subset L^\infty(\R^N;[0,1])$. Thus we can use as weak$^*$ convergence the one on $L^\infty(\R^N;[0,1])$.\\
However, even in this case, the eventual non-compactness of the support of the involved functions could create some problems. Hence we need to introduce the truncated functional
\begin{equation*}
\fG_\beta^M(g_1,g_2)=\int_{\R^N}\int_{\R^N}g_1(x)g_1(y)(|x-y| \wedge M^{\frac{1}{\beta}})^\beta dxdy,
\end{equation*}
with $\fG_\beta^M(f)=\fG_\beta^M(f,f)$ for any $f \in L^1(\R^N;[0,1])$, $\fG_\beta^M(E,F)=\fG_\beta^M(\chi_E,\chi_F)$ for any measurable sets $E,F \subseteq \R^N$ and $\fG_\beta^M(E)=\fG_\beta^M(E,E)$ for any measurable set $E \subseteq \R^N$. Moreover, let us denote its deficit by $\fD_\beta^M(E)=\fG_\beta^M(E)-\fG_\beta^M(B)$. Since the function $h_M(t)=(t \wedge M^{\frac{1}{\beta}})^\beta$ is increasing, we have by Riesz rearrangement inequality that $\fD_\beta^M(E)\ge 0$ for any Borel set $E$ such that $|E|=\omega_N$.\\
Concerning the truncated energy, let us observe that if $M>2$ then $\fG_\beta^M(B)=\fG_\beta(B)$. Moreover, since if $M_1<M_2$ then $h_{M_1}<h_{M_2}$, it holds $\fG_\beta^{M_1}(E)\le \fG_\beta^{M_2}(E)$.\\
By monotone convergence theorem, one also obtains $\lim_{M \to +\infty}\fG_\beta^M(f)=\fG_\beta(f)$, thus we can also conclude that for any $M>0$ it holds $\fG_\beta^M(f)\le \fG_\beta(f)$. Now we are ready to prove the following \textit{weak$^*$ continuity} Lemma.
\begin{lem}\label{lem:weakstarcont}
	Let $\{f_n\}_{n \in \N} \subset L^1(\R^N;[0,1])$ be a sequence of functions such that $f_n \overset{*}{\rightharpoonup} f$ in $L^\infty(\R^N;[0,1])$ and $\Norm{f_n}{L^1(\R^N)}\to\Norm{f}{L^1(\R^N)}$. Then
	\begin{equation*}
	\fG_\beta(f)=\lim_{M \to +\infty}\lim_{n \to +\infty}\fG_\beta^M(f_n).
	\end{equation*} 
\end{lem}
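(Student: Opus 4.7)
The plan is to evaluate the two iterated limits separately: first show that for each fixed $M$ one has $\lim_{n\to\infty}\fG_\beta^M(f_n)=\fG_\beta^M(f)$, and then pass $M\to\infty$ by monotone convergence. The outer step is essentially free: since $h_M(t):=(t\wedge M^{1/\beta})^\beta\uparrow t^\beta$ pointwise as $M\to\infty$, the monotone convergence theorem (applied to the product measure $f(x)f(y)dx\,dy$ on $\R^N\times\R^N$) gives $\fG_\beta^M(f)\uparrow\fG_\beta(f)$. So the substance lies entirely in the inner limit.

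For the inner limit, first I would extract a \emph{tightness} property from the hypotheses. Testing $f_n\overset{*}{\rightharpoonup}f$ against the $L^1$ function $\chi_{B_R}$ gives $\int_{B_R}f_n\to\int_{B_R}f$; subtracting this from $\Norm{f_n}{L^1}\to\Norm{f}{L^1}$ yields $\int_{B_R^c}f_n\to\int_{B_R^c}f$, and since $f\in L^1$ the right-hand side tends to $0$ as $R\to\infty$. Hence for every $\varepsilon>0$ there exist $R,n_0$ with $\int_{B_R^c}f_n<\varepsilon$ for all $n\geq n_0$. Since $h_M\leq M$, splitting $\fG_\beta^M(f_n)$ into integrals over $B_R\times B_R$ and its complement controls the difference $\fG_\beta^M(f_n)-\fG_\beta^M(f_n\chi_{B_R})$ by $CM\varepsilon$ (with $C$ depending on the uniformly bounded $L^1$-norms), and the same for $f$. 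Thus it suffices to prove the convergence after truncating both $f_n$ and $f$ to $B_R$.

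On the bounded domain $B_R$ the kernel $h_M(|x-y|)$ is bounded on $B_R\times B_R$, so the associated integral operator $T_M:L^2(B_R)\to L^2(B_R)$, $T_Mg(y)=\int_{B_R}g(x)h_M(|x-y|)dx$, is Hilbert--Schmidt, in particular compact and self-adjoint. Now $f_n\chi_{B_R}\rightharpoonup f\chi_{B_R}$ weakly in $L^2(B_R)$ (any $g\in L^2(B_R)$ extended by zero lies in $L^1(\R^N)$, so the weak$^*$ convergence in $L^\infty(\R^N)$ tests against it). Compactness of $T_M$ upgrades this to $T_M f_n\to T_M f$ strongly in $L^2(B_R)$, and the pairing of a strongly convergent sequence against a weakly convergent one gives
\[
\fG_\beta^M(f_n\chi_{B_R})=\langle T_M f_n,f_n\rangle_{L^2(B_R)}\longrightarrow\langle T_M f,f\rangle_{L^2(B_R)}=\fG_\beta^M(f\chi_{B_R}).
\]
Combining with the tightness estimate and letting $\varepsilon\to 0$ finishes the inner limit; then the outer monotone step concludes.

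The principal obstacle is the tightness step: weak$^*$ convergence in $L^\infty(\R^N)$ alone does not rule out mass escaping to infinity, and the bounded kernel $h_M$ does not decay, so integrals over $B_R^c\times B_R^c$ cannot be controlled without a uniform smallness of $\int_{B_R^c}f_n$. The role of the assumption $\Norm{f_n}{L^1}\to\Norm{f}{L^1}$ is precisely to provide this, and the role of the truncation $M$ is to make the kernel bounded so the Hilbert--Schmidt argument applies; neither ingredient can be dropped.
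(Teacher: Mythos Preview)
Your proof is correct and follows essentially the same architecture as the paper's: both derive tightness from the combination of weak$^*$ convergence and $L^1$-norm convergence, use it together with $h_M\le M$ to reduce to $B_R\times B_R$, and then pass to the outer limit by monotone convergence. The only difference is in the convergence of the quadratic form on the bounded domain: the paper asserts that $\widehat f_n(x,y):=f_n(x)f_n(y)\overset{*}{\rightharpoonup}\widehat f$ in $L^\infty(\R^{2N})$ and tests against the $L^1$ kernel $h_M(|x-y|)\chi_{B_R\times B_R}$, whereas you route through the compactness of the Hilbert--Schmidt operator $T_M$ on $L^2(B_R)$; both devices are equally valid, and yours has the small advantage of being self-contained rather than relying on weak$^*$ convergence of tensor products.
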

\begin{proof}
	Fix $\varepsilon>0$ and observe that, being $f \in L^1(\R^N;[0,1])$, there exists a radius $R>0$ such that $\int_{\R^N \setminus B_R}f(x)dx<\varepsilon$. By using both the convergence $\Norm{f_n}{L^1(\R^N)}\to \Norm{f}{L^1(\R^N)}$ and $f_n \overset{*}{\rightharpoonup} f$ in $L^\infty(\R^N)$, we get
	\begin{align*}
	\lim_{n \to +\infty}\int_{\R^N\setminus B_R}f_n(x)dx&=\lim_{n \to +\infty}\int_{\R^N}f_n(x)dx-\lim_{n \to +\infty}\int_{B_R}f_n(x)dx\\
	&=\int_{\R^N}f(x)dx-\int_{B_R}f(x)dx=\int_{\R^N\setminus B_R}f(x)dx<\varepsilon,
	\end{align*}
	hence, for $n$ big enough, we achieve $\int_{\R^N \setminus B_R}f_n(x)dx<\varepsilon$.\\
	Now, for any function $g \in L^\infty(\R^N)$ let us define $\widehat{g}(x,y)=g(x)g(y)$ that is a function in $L^\infty(\R^{2N})$. In particular we have that $\widehat{f}_n \overset{*}{\rightharpoonup}\widehat{f}$ and in particular $\fG^M_\beta(f_n\chi_{B_R}) \to \fG^M_\beta(f_n\chi_{B_R})$. \\
	On the other hand, since $\int_{\R^N \setminus B_R}f(x)dx<\varepsilon$, it is easy to check that
	\begin{equation*}
	\fG_\beta^M(f)-\fG_\beta^M(f\chi_{B_R})\le 2\varepsilon \Norm{f}{L^1}M
	\end{equation*}
	and the same holds for $f_n$.\\
	We achieve, for $n$ big enough,
	\begin{equation*}
	|\fG_\beta^M(f)-\fG_\beta^M(f_n)|\le 2\varepsilon M(\Norm{f}{L^1(\R^N)}+\Norm{f_n}{L^1(\R^N)})+|\fG_\beta^M(f\chi_{B_R})-\fG_\beta^M(f_n\chi_{B_R})|
	\end{equation*}
	and then, sending $n \to +\infty$,
	\begin{equation*}
	\limsup_{n \to +\infty}|\fG_\beta^M(f)-\fG_\beta^M(f_n)|\le 4\varepsilon M\Norm{f}{L^1(\R^N)}.
	\end{equation*}
	Since $\varepsilon>0$ is arbitrary, we can send it to $0^+$ to obtain
	\begin{equation*}
	\lim_{n \to +\infty}\fG_\beta^M(f_n)=\fG_\beta^M(f).
	\end{equation*}
	Finally, taking the limit as $M \to +\infty$, we conclude the proof.
\end{proof}
\subsection{The \textit{big asymmetry} case}
Now that we have some estimates and continuity properties for $\fG_\beta$, we can work on the first part of our plan. As first step, we need to show a rough deficit estimate as the asymmetry is big enough.
\begin{lem}\label{lem:rest}
	There exists a constant $\xi:=\xi(N,\beta)>0$ such that for any Borel set $E \subset \R^N$ with $|E|=\omega_N$ and $\delta(E)\ge 2(\omega_N-\xi)$ it holds
	\begin{equation*}
	\fD_\beta(E)\ge \frac{(3^\beta-2^\beta)}{2}\omega_N^2.
	\end{equation*}
\end{lem}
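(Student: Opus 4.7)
The strategy is to exploit the fact that the hypothesis $\delta(E) \ge 2(\omega_N - \xi)$ forces $E$ to be extremely ``spread out'' in $\R^N$, so that most pairs of points in $E$ are far apart and $\fG_\beta(E)$ beats the trivial upper bound $\fG_\beta(B) \le 2^\beta \omega_N^2$ (which holds since $|x-y| \le 2$ on $B \times B$) by a definite amount. The idea is to choose $\xi(N,\beta)$ small enough that for every $y \in E$ at least half of $E$ lies outside some large fixed ball $B_R(y)$.

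First, translate the asymmetry hypothesis. Since $|E| = |B_1(x)| = \omega_N$, the identity $|E \Delta B_1(x)| = 2\omega_N - 2|E \cap B_1(x)|$ combined with $\delta(E) \ge 2(\omega_N - \xi)$ yields $|E \cap B_1(x)| \le \xi$ for every $x \in \R^N$. Next cover any ball $B_R(y)$ (with $R \ge 1$) by at most $C_N R^N$ unit balls (for instance via a lattice of cubes of side $1/\sqrt{N}$), so that $|E \cap B_R(y)| \le C_N R^N \xi$ for every $y \in \R^N$. Now fix $R := (3^\beta + 2^\beta)^{1/\beta}$ and then choose $\xi := \xi(N,\beta) > 0$ so small that $C_N R^N \xi \le \omega_N/2$; this forces $|E \setminus B_R(y)| \ge \omega_N/2$ for every $y \in E$.

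To conclude, estimate $\fG_\beta(E)$ by discarding the close pairs:
\[
\fG_\beta(E) \ge \int_E \int_{E \setminus B_R(y)} |x-y|^\beta \, dx \, dy \ge R^\beta \cdot \frac{\omega_N}{2} \cdot \omega_N = \frac{(3^\beta + 2^\beta) \omega_N^2}{2}.
\]
Subtracting the trivial bound $\fG_\beta(B) \le 2^\beta \omega_N^2$ then delivers $\fD_\beta(E) \ge \frac{3^\beta - 2^\beta}{2}\omega_N^2$. No real obstacle is anticipated: the covering constant $C_N$ is purely dimensional, $R$ depends only on $\beta$, and therefore $\xi$ depends only on $N$ and $\beta$, as required by the statement.
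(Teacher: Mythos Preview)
Your proof is correct and follows essentially the same route as the paper: both translate the asymmetry hypothesis into $|E\cap B_1(x)|\le \xi$ for every $x$, cover a larger ball by finitely many unit balls to bound $|E\cap B_R(y)|$, and then discard the near pairs in $\fG_\beta(E)$ to beat the trivial bound $\fG_\beta(B)\le 2^\beta\omega_N^2$. The only cosmetic difference is that the paper fixes $R=3$ and absorbs the loss $k\xi$ at the end, while you pick $R=(3^\beta+2^\beta)^{1/\beta}$ so that the arithmetic gives the constant $\tfrac{3^\beta-2^\beta}{2}\omega_N^2$ directly.
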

\begin{proof}
	Note that 
	\begin{equation*}
		\fG_\beta(B)=\int_B\int_B|x-y|^\beta dxdy \le 2^\beta\omega_N^2.
	\end{equation*} 
	On the other hand, given a ball $B(x)$,
	\begin{equation*}
		|B(x)\setminus E|=\omega_N-|B(x)\cap E|=|E \setminus B(x)|=\frac{|B(x) \Delta E|}{2}.
	\end{equation*}	
	Let $k \in \N$ the minimum number of balls of radius $1$ covering $B_3$. Then, given $x \in E$, we have from the above estimate $|B_3(x) \cap E|<k\xi$. Hence
	\begin{equation*}
		\fG_\beta(E)\ge \int_E dx \int_{E\setminus B_3(x)}|x-y|^\beta dy\ge 3^\beta|E|(|E|-k\xi).
	\end{equation*}  
	Thus if $\xi$ is sufficiently small, depending on $N$ and $\beta$, we have
	\begin{equation*} \fG_\beta(E)-\fG_\beta(B)\ge 3^\beta\omega_N(\omega_N-k\xi)-2^\beta \omega_N^2=(3^\beta-2^\beta)\omega_N^2-3^\beta \omega_N k\xi>0
	\end{equation*}
\end{proof}
Now that we have a rough deficit estimate, we can refine such estimate to show that we can reduce to the \textit{small asymmetry} case.
\begin{lem}\label{lemcompact}
	For any $\mu>0$ there exists $\eta=\eta(\mu,\alpha,N)>0$ such that for any set $E \subseteq \R^N$ such that $|E|=\omega_N$ and $\delta(E)\ge \mu$ it holds $\fD_\beta(E)\ge \eta$.
\end{lem}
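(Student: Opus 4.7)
The plan is to argue by contradiction, combining the rough estimate of Lemma \ref{lem:rest} with the weak$^*$ continuity of Lemma \ref{lem:weakstarcont} and the strict minimality of balls provided by Lemma \ref{lem:functionminimum}. Suppose that no such $\eta$ exists: then there exist $\mu>0$ and a sequence $\{E_n\}$ with $|E_n|=\omega_N$, $\delta(E_n)\ge\mu$, and $\fD_\beta(E_n)\to 0$. By Lemma \ref{lem:rest}, for $n$ large one has $\delta(E_n)<2(\omega_N-\xi)$; expanding $|E_n\Delta B(x)|=2\omega_N-2|E_n\cap B(x)|$ yields $x_n\in\R^N$ with $|E_n\cap B(x_n)|>\xi$. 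After translation I may assume $x_n=0$, so $|E_n\cap B|>\xi$ for all $n$ large.

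The key intermediate step is a tightness estimate. For any $R>2$, restricting the double integral defining $\fG_\beta(E_n)$ to the product $(E_n\cap B)\times(E_n\setminus B_R)$ and using $|x-y|\ge R-1$ on this set gives
\begin{equation*}
\fG_\beta(E_n)\ge \xi(R-1)^\beta |E_n\setminus B_R|.
\end{equation*}
Since $\fG_\beta(E_n)\to \fG_\beta(B)$ is bounded, $\sup_n|E_n\setminus B_R|\to 0$ as $R\to+\infty$. Extracting a subsequence so that $\chi_{E_n}\overset{*}{\rightharpoonup} f$ in $L^\infty(\R^N;[0,1])$, testing against $\chi_{B_R}\in L^1$ and combining with the tightness bound forces $\Norm{f}{L^1(\R^N)}=\omega_N$, so the hypotheses of Lemma \ref{lem:weakstarcont} are met. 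Using that lemma together with $\fG_\beta^M\le \fG_\beta$ and $\fG_\beta(E_n)\to \fG_\beta(B)$, I obtain
\begin{equation*}
\fG_\beta(f)=\lim_{M\to+\infty}\lim_{n\to+\infty}\fG_\beta^M(\chi_{E_n})\le \lim_{n\to+\infty}\fG_\beta(\chi_{E_n})=\fG_\beta(B).
\end{equation*}

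Conversely, Lemma \ref{lem:functionminimum} combined with $\Norm{f}{L^1(\R^N)}=\omega_N$ gives $\fG_\beta(f)\ge \fG_\beta(B)$, and the equality case of the same lemma forces $f=\chi_{B(x_0)}$ for some $x_0\in\R^N$. The last step is to upgrade weak$^*$ convergence to $L^1$ convergence: since $\chi_{B(x_0)}\in L^1$,
\begin{equation*}
|E_n\Delta B(x_0)|=2\omega_N-2|E_n\cap B(x_0)|=2\omega_N-2\int_{\R^N}\chi_{E_n}\chi_{B(x_0)}\longrightarrow 2\omega_N-2\omega_N=0,
\end{equation*}
hence $\delta(E_n)\to 0$, contradicting $\delta(E_n)\ge \mu$. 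The main obstacle is the tightness estimate: weak$^*$ convergence alone allows mass to escape to infinity, and without a uniform control of $|E_n\setminus B_R|$ the limit $f$ could have strictly smaller $L^1$ norm and Lemma \ref{lem:weakstarcont} would be inapplicable. It is precisely the increasing kernel $|x-y|^\beta$ for $\beta>0$ (in contrast with the Riesz situation of \cite{fusco2019sharp}) that supplies this tightness via the elementary bound above.
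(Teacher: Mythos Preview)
Your proof is correct and shares the paper's overall architecture: argue by contradiction, establish tightness of the sequence, pass to a weak$^*$ limit via Lemma \ref{lem:weakstarcont}, and identify the limit as a ball using the equality case of Lemma \ref{lem:functionminimum}. The one genuine difference is in how tightness is secured. The paper invokes Lions' concentration--compactness trichotomy and then rules out vanishing (via Lemma \ref{lem:rest}) and dichotomy (via the lower bound $\fG_\beta(E_n^1,E_n^2)\ge d_n^\beta|E_n^1||E_n^2|\to+\infty$) separately, leaving tightness as the only option. You instead use Lemma \ref{lem:rest} to pin down a fixed amount of mass $|E_n\cap B|>\xi$ after translation, and then the single elementary inequality $\fG_\beta(E_n)\ge \xi(R-1)^\beta|E_n\setminus B_R|$ delivers uniform tightness directly from the boundedness of $\fG_\beta(E_n)$. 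Your route is more self-contained and bypasses the concentration--compactness machinery; the paper's route makes the role of each failure mode explicit. Both exploit the same feature, namely that the increasing kernel $|x-y|^\beta$ penalizes mass at infinity, which is precisely the point you emphasize in your closing remark.
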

\begin{proof}
	Let $E_n$ be such that $|E_n|=\omega_N$ and $\fD_\beta(E_n)\to 0$. We want to show that $\delta(E_n)\to 0$.\\
	By the concentration compactness Lemma (see \cite{lions1984concentration}), there exists a non-relabeled subsequence $E_n$ such that one of the following properties hold:
	\begin{itemize}
		\item \textit{vanishing}: For any $R>0$ it holds $\lim_{n}\sup_{x \in \R^N}|E_n \cap B_R(x)|=0$;
		\item \textit{dichotomy}: There exists $\lambda \in (0,\omega_N)$ such that for every $\varepsilon>0$ there exist $R(\varepsilon)$ and two sequences of sets $E^1_n,E^2_n \subseteq E_n$ with $E^1_n \subseteq B_{R(\varepsilon)}$ such that
		\begin{align}\label{dic}
		|E_n \Delta (E_n^1 \cup E_n^2)|\le \varepsilon && {\rm dist}(E_n^1,E_n^2) \to +\infty \\
		\nonumber||E_n^1|-\lambda| \le \varepsilon &&
		||E_n^2|-\omega_N+\lambda| \le \varepsilon.
		\end{align}
		\item \textit{tightness}: For any $\varepsilon>0$ there exists $R=R(\varepsilon)>0$ such that $\limsup_{n \to +\infty}|E_n\setminus B_R(0)|<\varepsilon$.
	\end{itemize}
	Let us exclude the case of a vanishing sequence. Indeed, if $R=1$ we have for any $n$
	\begin{equation*}
	\delta(E_n)=\inf_{x \in \R^N}|E_n \Delta B(x)|=2\inf_{x \in \R^N}|B(x) \setminus E_n|=2\omega_N-2\sup_{x \in \R^N}|B(x)\setminus E_n|
	\end{equation*}
	hence $\delta(E_n)\to 2\omega_n$, which is a contradiction to $\fD(E_n)\to 0$ by Lemma \ref{lem:rest}.\\
	Let us now exclude dichotomy case. Let $\lambda \in (0,\omega_N)$ be such that \eqref{dic} holds. Consider two balls with volume $|E_n^1|$ and $|E_n^2|$, hence with radii
	\begin{equation*}
	R_{1,n}=\left(\frac{|E_n^1|}{\omega_N}\right)^{1/N}, \qquad R_{2,n}=\left(\frac{|E_n^2|}{\omega_N}\right)^{1/N}.
	\end{equation*}
	In particular, since the energy is minimized on balls, we have
	\begin{equation*}
	\fG_\beta(E_n^i)\ge R_{i,n}^{2N+\beta}\fG_\beta(B)
	\end{equation*}
	and then
	\begin{equation*}
	\fG_\beta(E_n^1)+\fG_\beta(E_n^2)\ge (R_{1,n}^{2N+\beta}+R_{2,n}^{2N+\beta})\fG_\beta(B).
	\end{equation*}
	Now let us estimate $\fG_\beta(E_n^1,E_n^2)$. Let $d_n={\rm dist}(E_n^1,E_n^2)$ and observe that
	\begin{equation*}
	\fG_\beta(E_n^1,E_n^2)\ge d_n^\beta |E_n^1||E_n^2|.
	\end{equation*}
	Thus we have
	\begin{equation*}
	\fG_\beta(E_n) \ge (R_{1,n}^{2N+\beta}+R_{2,n}^{2N+\beta})\fG_\beta(B)+d_n^\beta |E_n^1||E_n^2|,
	\end{equation*}
hence $\fG_\beta(E_n) \to +\infty$ which is a contradiction.\\
	This proves that we are in the tightness case. By Prohorov's Theorem (see \cite{billingsley2013convergence}), up to a subsequence, $\chi_{E_n} \overset{*}{\rightharpoonup} f$ in $L^\infty$. Moreover, tightness implies also that $\Norm{f}{L^1}=\lim_{n \to +\infty}\Norm{\chi_{E_n}}{L^1}=\omega_N$. By Lemma \ref{lem:weakstarcont} we have
	\begin{equation*}
	\fG_\beta(f)=\lim_{M \to +\infty}\lim_{n \to +\infty}\fG^M_\beta(E_n).
	\end{equation*}
	Let us fix $M>2$: we have
	\begin{equation*}
	\fD^M_\beta(E_n):=\fG^M_\beta(E_n)-\fG_\beta(B)\le \fG_\beta(E_n)-\fG_\beta(B)=\fD(E_n)
	\end{equation*}
	hence $\fD^M(E_n)\to 0$. Thus, in particular,
	\begin{equation*}
	\lim_{n \to +\infty}\fG^M_\beta(E_n)=\fG^M_\beta(B)
	\end{equation*}
	and then
	\begin{equation*}
	\fG_\beta(f)=\fG_\beta(B),
	\end{equation*}
	that implies $f=\chi_{B(z)}$ for some $z \in \R^N$.\\
	Finally, we have
	\begin{equation*}
	\limsup_{n \to +\infty}\delta(E_n)\le \limsup_{n \to +\infty}|E_n \Delta B(z)|=2\limsup_{n \to +\infty}|B(z) \setminus E_n|=0
	\end{equation*}
	by the $L^\infty$ weak$^*$ convergence of $\chi_{E_n}$ to $\chi_{B(z)}$.
\end{proof}
From last Lemma, we conclude that we can always reduce to the case in which asymmetry is small. Now that we have reduced to this case, we can study the construction of the \textit{better} nearly-spherical set.
\subsection{Construction of the nearly-spherical set}
The second part of the plan follows the same ideas of \cite{fusco2019sharp}. However, since Lemma \ref{lem:diff} is proved only for bounded sets, we need a preliminary step. Summarizing the plan:
\begin{enumerate}
	\item First we prove that we can trap our set in a ball of a certain radius (independent of the set itself) without changing the asymmetry and controlling in a suitable way the deficit;
	\item As second step we \textit{fill the holes} of our set in order to obtain a new set that is uniformly close to a ball of radius $1$, without changing the asymmetry and reducing the deficit;
	\item As third step, we show that if Equation \eqref{quantisop} still does not hold, then we can move some other mass of the set to construct a nearly-spherical set around a certain ball such that the deficit at most duplicates and the symmetric difference with respect to the chosen ball controls the asymmetry;
	\item Finally, observing that the barycentre of the previously constructed set depends continuously on the choice of the centre of the ball, we can construct it in such a way that the barycentre of the new set coincides with the centre of the chosen ball.
\end{enumerate}
Let us formalize the first step of this procedure.
\begin{lem}\label{lembound}
	There exist three positive constants $R$, $K$ and $\delta_0$ depending only on $N$ and $\beta$ such that for any Borel set $E \subseteq \R^N$ with $|E|=\omega_N$ and $\delta(E)<\delta_0$ there exists a set $\widetilde{E} \subseteq B_R$ such that $|\widetilde{E}|=\omega_N$, $\fD_\beta(\widetilde{E})\le K\fD_\beta(E)$ and $\delta(\widetilde{E})=\delta(E)$. Moreover, $\delta(\widetilde{E})=|\widetilde{E} \Delta B|$.
\end{lem}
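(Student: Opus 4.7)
The plan is to produce $\widetilde E$ by removing the tail of $E$ that lies outside a fixed ball $B_R$ (with $R$ depending only on $N$ and $\beta$) and placing the displaced mass into a thin spherical shell adjacent to $\partial B_R$ and disjoint from $E$. The construction proceeds in three stages.

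First, since the infimum in the definition of $\delta(E)$ is attained (by a standard compactness argument applied to $x\mapsto |E\Delta B(x)|$), after translation I may assume $|E\Delta B|=\delta(E)$. Setting $b:=\delta(E)/2$, this gives $|E\setminus B|=|B\setminus E|=b<\delta_0/2$. Let $R>3$ be a universal constant to be fixed later, and write $f:=|E\setminus B_R|$, so that $f\le b$. Since $|(B_R\setminus B_{R-1/2})\setminus E|$ is much larger than $f$ for $\delta_0$ small, I can pick a Borel set $F'\subseteq (B_R\setminus B_{R-1/2})\setminus E$ of measure $f$, together with an invertible transport map $T\colon E\setminus B_R\to F'$ obtained, for example, via the Knothe--Rosenblatt construction of Section \ref{Sec2}. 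I then define $\widetilde E:=(E\cap B_R)\cup F'$, which satisfies $\widetilde E\subseteq B_R$, $|\widetilde E|=\omega_N$, and $|\widetilde E\Delta B|=2b$.

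To verify $\delta(\widetilde E)=2b=\delta(E)$ with $B$ as the optimal ball, I check $|\widetilde E\Delta B(y)|\ge 2b$ for every $y$ by splitting according to $|y|$. If $|y|\le R-3/2$, then $B(y)\cap F'=\emptyset$ (since $F'$ lies in the thin shell close to $\partial B_R$), so $|\widetilde E\cap B(y)|\le|(E\cap B_R)\cap B(y)|\le|E\cap B(y)|\le\omega_N-b$ by the optimality of $B$ for $E$. If instead $|y|>R-3/2>2$, then $B\cap B(y)=\emptyset$, which forces $|(E\cap B_R)\cap B(y)|\le|E\setminus B|=b$; hence $|\widetilde E\cap B(y)|\le b+|F'|\le 2b\le\omega_N-b$ provided $\delta_0<\omega_N/3$. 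In either case $|\widetilde E\Delta B(y)|=2\omega_N-2|\widetilde E\cap B(y)|\ge 2b$, and together with $|\widetilde E\Delta B|=2b$ this yields $\delta(\widetilde E)=|\widetilde E\Delta B|=\delta(E)$.

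The deficit estimate $\fD_\beta(\widetilde E)\le K\fD_\beta(E)$ is the main obstacle. Decomposing
\[
\fG_\beta(E)-\fG_\beta(\widetilde E)=2\bigl[\fG_\beta(E\cap B_R,E\setminus B_R)-\fG_\beta(E\cap B_R,F')\bigr]+\bigl[\fG_\beta(E\setminus B_R)-\fG_\beta(F')\bigr],
\]
for $x\in E\cap B_R$ and $y\in E\setminus B_R$ one has $|x-y|^\beta\ge (|y|-R)^\beta$, while $|x-T(y)|^\beta\le(2R)^\beta$; so the first bracket is nonnegative for $|y|>3R$, and for intermediate $R<|y|\le 3R$ the deficit is shifted by at most a universal constant times $f$, and the self-energy difference is similarly controlled. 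The extra cost must then be absorbed into $\fD_\beta(E)$ itself: using the increasing kernel and the bulk estimate $|E\cap B|\ge\omega_N-b$, one can show $\fG_\beta(E\cap B,E\setminus B_R)\ge cR^\beta f$, which bounds $\fD_\beta(E)$ from below by a quantity of the same order as the extra cost. Choosing $R$ large enough depending only on $N,\beta$ then yields $\fG_\beta(\widetilde E)\le\fG_\beta(E)+\tfrac{1}{2}\fD_\beta(E)$, hence $\fD_\beta(\widetilde E)\le K\fD_\beta(E)$ for some $K=K(N,\beta)$. Making this tradeoff between $R$ and $\delta_0$ rigorous is the delicate point of the proof.
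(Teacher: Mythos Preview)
Your construction and asymmetry argument are essentially the paper's (the paper places the replacement mass in $B_{1+R/2}\setminus B_{1+R/3}$ rather than near $\partial B_R$, but this is cosmetic; note you need $R>7/2$, not merely $R>3$, so that $|y|>R-3/2$ really forces $B\cap B(y)=\emptyset$). The deficit estimate follows the same strategy as the paper, but your sketch contains two imprecisions worth naming.

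First, the ``extra cost'' from moving the near mass $\{R<|y|\le 3R\}$ is not a universal constant times $f$: the crude bound you actually obtain is $\fG_\beta(\widetilde E)\le\fG_\beta(E)+C R^\beta f$. Second, and more importantly, the cross-term bound $\fG_\beta(E\cap B,\,E\setminus B_R)\ge c_0(R-1)^\beta f$ does not by itself yield $\fD_\beta(E)\ge c R^\beta f$: you must also control the loss $\fG_\beta(B)-\fG_\beta(E\cap B_R)$. The paper does this (and you implicitly need it) via the Riesz-type isoperimetric inequality
\[
\fG_\beta(E\cap B_R)\ \ge\ \bigl(1-\tfrac{f}{\omega_N}\bigr)^{2+\beta/N}\fG_\beta(B),
\]
which gives $\fD_\beta(E)\ge\bigl(c_0(R-1)^\beta-C_0\bigr)f$ and hence $f\le C' R^{-\beta}\fD_\beta(E)$ for $R$ large enough. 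Plugging back, $\fG_\beta(\widetilde E)\le\fG_\beta(E)+C''\fD_\beta(E)$ with $C''$ depending only on $N,\beta$. Since the two $R^\beta$ factors cancel, taking $R$ larger does \emph{not} push $C''$ down to $\tfrac12$; your stated bound $\fG_\beta(\widetilde E)\le\fG_\beta(E)+\tfrac12\fD_\beta(E)$ is unjustified, but the needed conclusion $\fD_\beta(\widetilde E)\le K\fD_\beta(E)$ with a universal $K$ does follow once the isoperimetric step is made explicit.
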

\begin{proof}
	Let us observe that since $\delta(E)<\delta_0$, arguing as in the proof of Lemma \ref{lem:rest}, we have $|E \setminus B(z)|<\frac{\delta_0}{2}$, where $B(z)$ is the optimal ball for the asymmetry, i.e. $\delta(E)=|E \Delta B(z)|$. Moreover, the operator $\fG_\beta$ is translation-invariant, hence we may assume $z \equiv 0$. We also have $|E \cap B| \ge \omega_N-\frac{\delta_0}{2}$, thus we can choose $\delta_0$ to be so small such that $|E \cap B_1(z)| \ge \frac{\omega_N}{k_R}$ where $k_R>1$ is to be defined in what follows.\\
	Now let us consider the ball $B_{R}$ where $R>0$ is to be specified later and the annulus $A:=B_R \setminus B$ and let us split $E$ in three parts:
	\begin{equation*}
	E_1=E \cap B, \qquad E_2=E \setminus B_R, \qquad E_3=E \cap A.
	\end{equation*}
	Now let us consider another annulus $\widetilde{A}=B_{1+\frac{R}{2}}\setminus B_{1+\frac{R}{3}}$.
	As 
	\begin{equation*}
	|\widetilde{A} \setminus E_3|\ge \omega_N\left(\left(1+\frac{R}{2}\right)^N-\left(1+\frac{R}{3}\right)^N\right)-\frac{\delta_0}{2},
	\end{equation*}
	we can choose $\delta_0$ so small that
	\begin{equation*}
	\omega_N \left(\left(1+\frac{R}{2}\right)^N-\left(1+\frac{R}{3}\right)^N\right)-\frac{\delta_0}{2}>\frac{\delta_0}{2}.
	\end{equation*}
	Thus, since $|E_2|<\frac{\delta_0}{2}$, there exists $\widetilde{E}_2 \subseteq \widetilde{A} \setminus E_3$ such that $|\widetilde{E}_2|=|E_2|$. Then we define $\widetilde{E}=(E \cup \widetilde{E}_2)\setminus E_2$. Before estimating $\fD_\beta(\widetilde{E})$, let us show that we can choose $k_R$, hence $\delta_0$, so small that $|E_2|\le C\fD_\beta(E)$ where $C$ is a constant depending only on $N, \beta$.\\
	To do this, let us recall that $\fG_\beta(E)=\fD_\beta(E)+\fG_\beta(B)$ and then, using the fact that $E=E_1 \cup E_2 \cup E_3$, we have
	\begin{align*}
	\fG_\beta(E)\ge \fG_\beta(E_1 \cup E_3)+2\fG_\beta(E_2,E_1)
	\end{align*}
	from which we obtain
	\begin{equation*}
	\fG_\beta(E_2,E_1)\le \fD_\beta(E)+\fG_\beta(B)-\fG_\beta(E_1 \cup E_3)-\fG_\beta(E_1,E_2).
	\end{equation*}
	Now let us observe that, denoting by $\widetilde{B}$ a ball with measure $|\widetilde{B}|=|E_1 \cup E_3|=\omega_N-|E_2|$, we have, by minimality of the ball,
	\begin{equation*}
	\fG_\beta(E_1 \cup E_3)\ge \left(\frac{\omega_N-|E_2|}{\omega_N}\right)^{2+\frac{\beta}{N}}\fG_\beta(B)
	\end{equation*}
	and then, since $|E_2|/\omega_N<1$,
	\begin{align*}
	\fG_\beta(E_2,E_1)&\le \fD_\beta(E)+\left(1-\left(1-\frac{|E_2|}{\omega_N}\right)^{2+\frac{\beta}{N}}\right)\fG_\beta(B)-\fG_\beta(E_1,E_2)\\
	&\le \fD_\beta(E)+\left(1-\left(1-\frac{|E_2|}{\omega_N}\right)^{2}\right)\fG_\beta(B)-\fG_\beta(E_1,E_2)\\
	&\le \fD_\beta(E)+|E_2|\left(\frac{2}{\omega_N}-\frac{|E_2|}{\omega_N^2}\right)\fG_\beta(B)-\fG_\beta(E_1,E_2) .
	\end{align*}
	Now let us observe that, since $|E_1|\ge \frac{\omega_N}{k_R}$ and $d(E_1,E_2)\ge R$,
	\begin{equation}\label{est2}
	\fG_\beta(E_1,E_2)\ge |E_2||E_1|R^{\beta}\ge \frac{R^{\beta} \omega_N}{k_R}|E_2|,
	\end{equation} 
	thus it holds
	\begin{align*}
	\fG_\beta(E_2,E_1)&\le \fD_\beta(E)+|E_2|\left(\frac{2}{\omega_N}\fG_\beta(B)-\frac{R^{\beta} \omega_N}{k_R}-\frac{|E_2|}{\omega_N^2}\fG_\beta(B)\right).
	\end{align*}
	Now we want
	\begin{equation}\label{eq:estnec}
	\frac{2}{\omega_N}\fG_\beta(B)-\frac{R^{\beta} \omega_N}{k_R}<-1
	\end{equation}
	that is equivalent to say
	\begin{equation*}
	k_R<\frac{R^{\beta} \omega^2_N}{2\fG_\beta(B)+\omega_N}.
	\end{equation*}
	For $k_R$ to exists, we have to ask $\frac{R^{\beta} \omega^2_N}{2\fG_\beta(B)+\omega_N}>1$, that is to say $R>\left(\frac{2\fG_\beta(B)+\omega_N}{\omega^2_N}\right)^{\frac{1}{\beta}}$.
	Now, we $R:=2\left(\frac{2\fG_\beta(B)+\omega_N}{\omega^2_N}\right)^{\frac{1}{\beta}}+2$ and choose $k_R>1$ such that \eqref{eq:estnec} holds. We have
	\begin{align*}
	\fG_\beta(E_2,E_1)&\le \fD_\beta(E)+|E_2|\left(-1-\frac{\delta_R}{\omega_N^2}\fG_\beta(B)\right)\le \fD_\beta(E).
	\end{align*}
	Now, by using again Equation \eqref{est2}, we get
	\begin{equation*}
	|E_2|\le \frac{k_R}{R^\beta \omega_N}\fD(E).
	\end{equation*}
	Now we can estimate $\fD(\widetilde{E})$. To do this, let us observe that
	\begin{align*}
	\fG_\beta(\widetilde{E})&\le \fG_\beta(E)+\fG_\beta(\widetilde{E}_2)+2\fG_\beta(E_1,\widetilde{E_2})+2\fG_\beta(E_3,\widetilde{E_2})\\
	&\le \fD_\beta(E)+\fG_\beta(B)+2\fG_\beta(\widetilde{E},\widetilde{E_2}).
	\end{align*}
	Now let us recall that, by construction, ${\rm diam}(\widetilde{E})\le 2R$, hence
	\begin{equation*}
	\fG_\beta(\widetilde{E},\widetilde{E_2})\le 2^\beta R^\beta \omega_N |E_2|\le 2^\beta k_R \fD_\beta(E).
	\end{equation*} 
	Thus we finally get
	\begin{equation*}
	\fG_\beta(\widetilde{E})\le (1+2^{\beta+1} k_R)\fD_\beta(E)+\fG_\beta(B) 
	\end{equation*}
	and then $\fD_\beta(\widetilde{E})\le (1+2^{\beta+1} k_R)\fD_\beta(E)$. Setting $K=(1+2^{\beta+1} k_R)$ we have the estimate on the deficit.\\
	Finally, let us observe that $E \Delta \widetilde{E} \subseteq \R^N \setminus B_{1+R/3}$ and, in particular, we have $|\widetilde{E} \Delta B|=|E \Delta B|=\delta(E)$. On the other hand, for any $x \in \R^N$ such that $|x|\le R/3$ it holds
	\begin{equation*}
	|\widetilde{E}\Delta B(x)|=|E\Delta B(x)|\ge \delta(E).
	\end{equation*}
	Finally, if $|x|>R/3$, then, chosen any $y \in \R^N$ such that $|y|=\frac{R}{3}$, $|B \Delta B(x)|\ge |B \setminus B(y)|:=C_1$, where $C_1$ does only depend on $R$ and $N$. Thus we have
	\begin{equation*} 
	|\widetilde{E}\Delta B(x)|\ge |B\Delta B(x)|-|\widetilde{E}\Delta E|-|E\Delta B(x)| \ge C_1-3\delta_0.
	\end{equation*}
	Now, since we have fixed $R$, we can chose $\delta_0$ small enough to have $C_1-3\delta_0> \delta_0$, to obtain $|\widetilde{E}\Delta B(x)|> \delta(E)$. Then, taking the infimum on $x \in \R^N$ we obtain $\delta(\widetilde{E})=\delta(E)$.
\end{proof}
Now that we have shown we can reduce to the case in which $E \subseteq B_R$ for some universal radius $R>0$ (if $\delta(E)$ is small enough), let us proceed with step $2$ of our plan. So, let us show the following Lemma.
\begin{lem}\label{nearaball}
	Fix $\varepsilon \in (0,1)$. Then there exists a positive constant $\delta_\varepsilon>0$ such that the following property holds: if $E \subseteq B_R$, where $R$ is defined in Lemma \ref{lembound}, is a Borel set such that $|E|=\omega_N$ and $\delta(E)=|E \Delta B|<\delta_0$, then there exists a Borel set $\widetilde{E}$ such that $|\widetilde{E}|=\omega_N$, $B_{1-\varepsilon^2}\subseteq \widetilde{E} \subseteq B_{1+\varepsilon^2}$, $\fD_\beta(\widetilde{E})\le \fD_\beta(E)$ and $\delta(\widetilde{E})=\delta(E)=|\widetilde{E}\Delta B|$.
\end{lem}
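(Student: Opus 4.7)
The plan is to build $\widetilde{E}$ by filling every inner hole $G_1:=B_{1-\varepsilon^2}\setminus E$, discarding every outer protrusion $G_2:=E\setminus B_{1+\varepsilon^2}$, and compensating both volumes with two auxiliary pieces inside the annulus $A_\varepsilon:=B_{1+\varepsilon^2}\setminus B_{1-\varepsilon^2}$. Concretely, I set
\[ \widetilde{E} := B_{1-\varepsilon^2}\cup\bigl((E\cap A_\varepsilon\cap B)\setminus T\bigr)\cup(E\cap A_\varepsilon\setminus B)\cup S'', \]
where $T\subseteq (E\cap B)\setminus B_{1-\varepsilon^2/2}$ has measure $|G_1|$ and $S''\subseteq (B_{1+\varepsilon^2/2}\setminus B)\setminus E$ has measure $|G_2|$. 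For $\delta_\varepsilon$ small enough (depending on $\varepsilon$), both $T$ and $S''$ exist because their ambient layers have $\varepsilon$-dependent measure bounded away from zero, whereas $|G_1|,|G_2|\le\delta(E)/2<\delta_\varepsilon/2$. A bookkeeping computation using $|E|=\omega_N$ and $|E\Delta B|=2(|G_1|+|G_3|)$, with $G_3:=(B\cap A_\varepsilon)\setminus E$, yields $|\widetilde{E}|=\omega_N$, the inclusions $B_{1-\varepsilon^2}\subseteq\widetilde{E}\subseteq B_{1+\varepsilon^2}$, and $|\widetilde{E}\Delta B|=\delta(E)$.

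The heart of the argument is the deficit inequality $\fG_\beta(\widetilde{E})\le\fG_\beta(E)$. Setting $C:=E\cap\widetilde{E}$, $F:=E\setminus\widetilde{E}=G_2\cup T$, and $G:=\widetilde{E}\setminus E=G_1\cup S''$ (with $|F|=|G|$), one has
\[ \fG_\beta(E)-\fG_\beta(\widetilde{E})=2\bigl[\fG_\beta(C,F)-\fG_\beta(C,G)\bigr]+\bigl[\fG_\beta(F)-\fG_\beta(G)\bigr]. \]
I will use the natural volume-preserving transport $\Phi:F\to G$ that sends $G_2$ onto $S''$ and $T$ onto $G_1$. By the design of the layers, every $y\in F$ and its image $\Phi(y)\in G$ satisfy $|y|-|\Phi(y)|\ge\varepsilon^2/2$. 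With $f_C(w):=\int_C|x-w|^\beta\,dx$ and $f_B(w)=\psi(|w|)$, the uniform bound $|f_C-f_B|\le |C\Delta B|(2R)^\beta\le 2\delta_\varepsilon(2R)^\beta$, combined with the strict monotonicity of $\psi$ established inside the proof of Lemma \ref{lem:functionminimum}, produces an $\varepsilon$-dependent constant $c=c(\varepsilon,N,\beta)>0$ with $f_B(y)-f_B(\Phi(y))\ge c$ on every pair, and hence $f_C(y)-f_C(\Phi(y))\ge c/2$ once $\delta_\varepsilon$ is small. Integrating in $y$ gives $\fG_\beta(C,F)-\fG_\beta(C,G)\ge (c/2)|F|$, while the self-interaction remainder is bounded by $|\fG_\beta(F)-\fG_\beta(G)|\le 2(2R)^\beta|F|^2\le 2(2R)^\beta\delta_\varepsilon|F|$; for $\delta_\varepsilon$ small enough relative to $c$, the main term dominates and yields the desired sign.

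The last property $\delta(\widetilde{E})=|\widetilde{E}\Delta B|$, i.e.\ that $B$ itself is optimal for the Fraenkel asymmetry of $\widetilde{E}$, follows by the terminal argument of Lemma \ref{lembound}: for $|x|$ above an $\varepsilon$-independent threshold one has $|B\Delta B(x)|\ge C_1$, and the triangle inequality forces $|\widetilde{E}\Delta B(x)|\ge C_1-|\widetilde{E}\Delta B|=C_1-\delta(E)>\delta(E)$ provided $\delta_\varepsilon<C_1/2$; for $|x|$ small the near-inclusion $B_{1-\varepsilon^2}\subseteq\widetilde{E}\subseteq B_{1+\varepsilon^2}$ together with the continuity of $x\mapsto|\widetilde{E}\Delta B(x)|$ reduces to a direct comparison that is positive for $\delta_\varepsilon$ sufficiently small. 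The principal obstacle throughout is the estimate $\fG_\beta(C,F)\ge\fG_\beta(C,G)$: the intuition ``$|x-y|^\beta$ decreases when $y$ is pulled inward'' is only valid after averaging $x$ over $C$, not pointwise. This is precisely why the strict radial monotonicity of $\psi$ proved in Lemma \ref{lem:functionminimum} must be invoked, and why $T$ and $S''$ are sited in strictly narrower sublayers so as to guarantee a radial gap of size $\varepsilon^2/2$ that survives the perturbation $|C\Delta B|=O(\delta_\varepsilon)$.
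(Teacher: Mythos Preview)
Your construction and the energy estimate are essentially the paper's argument compressed into one step: you fill $G_1$ and discard $G_2$ while compensating inside the annulus, and you control $\fG_\beta(E)-\fG_\beta(\widetilde E)$ through the strict monotonicity of $\psi(t)=\int_B|y-te_1|^\beta\,dy$ combined with a radial gap of size $\varepsilon^2/2$ between each point of $F=G_2\cup T$ and its image in $G=G_1\cup S''$. That part is correct and matches the paper's mechanism.

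The gap is in the final claim $\delta(\widetilde E)=|\widetilde E\Delta B|$. Your argument for large $|x|$ is fine, but for small $|x|$ you appeal only to ``continuity'' and a ``direct comparison'', and neither yields a lower bound $|\widetilde E\Delta B(x)|\ge\delta(E)$. The paper's proof hinges on the precise placement of the compensating pieces: there $\widetilde H$ (your $T$) sits in $B_{1-\varepsilon^2/3}\setminus B_{1-\varepsilon^2/2}$ and $\widetilde G$ (your $S''$) in $B_{1+\varepsilon^2/2}\setminus B_{1+\varepsilon^2/3}$, so that the whole modification set $\widetilde E\Delta E$ lies in $B_{1-\varepsilon^2/3}\cup(\R^N\setminus B_{1+\varepsilon^2/3})$, i.e.\ is bounded away from $\partial B$. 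This buys the exact identity $|\widetilde E\Delta B(x)|=|E\Delta B(x)|$ for every $|x|\le\varepsilon^2/3$, because for such $x$ one has $B_{1-\varepsilon^2/3}\subseteq B(x)\subseteq B_{1+\varepsilon^2/3}$, and hence $|(G_1\cup\widetilde G)\cap B(x)|=|G_1|=|\widetilde H|=|(\widetilde H\cup G_2)\cap B(x)|$. Your choice $T\subseteq B\setminus B_{1-\varepsilon^2/2}$ and $S''\subseteq B_{1+\varepsilon^2/2}\setminus B$ lets both pieces touch $\partial B$, so for $0<|x|\ll\varepsilon^2$ the quantities $|T\cap B(x)|$ and $|S''\cap B(x)|$ can genuinely drift away from $|T|$ and $0$, and $|\widetilde E\Delta B(x)|$ need not equal $|E\Delta B(x)|$; there is then no mechanism preventing $|\widetilde E\Delta B(x)|<\delta(E)$.

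The fix is immediate: place $T$ inside $E\cap(B_{1-\varepsilon^2/3}\setminus B_{1-\varepsilon^2/2})$ and $S''$ inside $(B_{1+\varepsilon^2/2}\setminus B_{1+\varepsilon^2/3})\setminus E$, exactly as the paper does. Both ambient shells still have $\varepsilon$-dependent measure bounded away from zero, so the existence argument for $T$ and $S''$ is unchanged; the radial gap for the energy estimate becomes $\varepsilon^2/2$ on the outer pair and $\varepsilon^2/6$ on the inner pair (still strictly positive), and now the small-$|x|$ case of the asymmetry argument goes through verbatim.
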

\begin{proof}
	Fix $\varepsilon \in (0,1)$ and observe, as before, that if $\delta(E)<\delta_\varepsilon$, we have $|E \setminus B|<\frac{\delta_\varepsilon}{2}$. We divide the following proof in two parts: first we move the part of $E$ that is outside $B_{1+\varepsilon^2}$ inside this ball, defining a new set $E_1$; then we use the mass in $E_1\setminus B_{1-\varepsilon^2}$ to fill the holes in $E_1 \cap B_{1-\varepsilon^2}$, defining the set $\widetilde{E}$.\\
	Arguing as in the proof of Lemma \ref{lembound}, if we consider $G=E \setminus B_{1+\varepsilon^2}$, it holds $|G|<\delta_\varepsilon$, and, for $\delta_\varepsilon$ small enough, there is enough room to construct a set $\widetilde{G} \subseteq A_1\setminus E$, where $A_1=B_{1+\frac{\varepsilon^2}{2}}\setminus B_{1+\frac{\varepsilon^2}{3}}$, such that $|\widetilde{G}|=|G|$.\\
	 Define the set $E_1=(E \cup \widetilde{G})\setminus G$ and observe that $|E_1|=|E|$. Now let us show that $\fG_\beta(E_1)\le \fG_\beta(E)$. To do this let us observe that
	\begin{multline*}
	\fG_\beta(E_1)\le \fG_\beta(E)+\fG_\beta(\widetilde{G})+2\fG_\beta(B,\widetilde{G})\\+2\fG_\beta(E \setminus B,\widetilde{G})-2\fG_\beta(B,G)+2\fG_\beta(B \setminus E,G).
	\end{multline*}
	Now let us observe that, being $E \subset B_R$,
	\begin{equation*}
	\fG_\beta(\widetilde{G})+2\fG_\beta(E \setminus B,\widetilde{G})+2\fG_\beta(B \setminus E,G)\le |G|\delta_\varepsilon\left(4(2R)^\beta+(2+\varepsilon^2)^\beta\right).
	\end{equation*}
	Moreover, we have, recalling the definition of the function $\psi$ given in Equation \eqref{eq:psi} and the fact that it is a strictly increasing function,
	\begin{align*}
	\fG_\beta(B,\widetilde{G})&=\int_{\widetilde{G}}\psi(|x|)dx\le |G|\psi\left(1+\frac{\varepsilon^2}{2}\right),\\ \fG_\beta(B,G)&=\int_{G}\psi(|x|)dx\ge |G|\psi\left(1+\varepsilon^2\right),
	\end{align*}
	obtaining
	\begin{align*}
	\fG_\beta(E_1)&\le \fG_\beta(E)+|G|\left(\psi\left(1+\frac{\varepsilon^2}{2}\right)-\psi(1+\varepsilon^2)+\delta_\varepsilon\left(4(2R)^\beta+(2+\varepsilon^2)^\beta\right)\right).
	\end{align*}
	Finally, we can chose $\delta_\varepsilon$ small enough to have
	\begin{equation*}
	\psi\left(1+\frac{\varepsilon^2}{2}\right)-\psi(1+\varepsilon^2)+\delta_\varepsilon\left(4(2R)^\beta+(2+\varepsilon^2)^\beta\right)<0
	\end{equation*}
	and then $\fG_\beta(E_1)\le \fG_\beta(E)$.\\
	Now we want to modify again $E_1$ in such a way to fill with some mass the holes in $B_{1-\varepsilon^2}$. To do this, let us consider the set $H=B_{1-\varepsilon^2}\setminus E_1$ with measure $|H|<\frac{\delta_\varepsilon}{2}$. Now let us consider the mass of $E_1$ contained in $A_2=B_{1-\varepsilon^2/3}\setminus B_{1-\varepsilon^2/2}$. Since $|A_2 \setminus E_1|<\frac{\delta_\varepsilon}{2}$, we have $|A_2 \cap E_1|>\omega_N-\frac{\delta_\varepsilon}{2}$. Thus we can choose $\delta_\varepsilon$ small enough such that we can define $\widetilde{H}\subseteq A_2 \cap E$ with $|\widetilde{H}|=|H|$.\\
	Now let us define $\widetilde{E}=(E_1 \cup H)\setminus \widetilde{H}$ and observe that $|\widetilde{E}|=\omega_N$. Arguing exactly as before we get $\fG_\beta(\widetilde{E})\le \fG_\beta(E_1) \le \fG_\beta(E)$.\\
	Now let us observe that, by construction, $\widetilde{E}\Delta E \subseteq B_{1-\frac{\varepsilon^2}{3}}\cup \left(\R^N\setminus B_{1+\frac{\varepsilon^2}{3}}\right)$. By the way we modified the set, we got $|\widetilde{E} \Delta B|=|E \Delta B|=\delta(E)$.\\
	In general, for any $x \in \R^N$ with $|x|\le \frac{\varepsilon^2}{3}$ we have $|\widetilde{E} \Delta B(x)|=|E \Delta B(x)|\ge \delta(E)$ while for $|x|>\frac{\varepsilon^2}{3}$ we have
	\begin{equation*}
	|\widetilde{E} \Delta B(x)|\ge |B \Delta B(x)|-|B\Delta E|-|\widetilde{E}\Delta E|\ge |B \Delta B(x)|-3\delta_\varepsilon
	\end{equation*}
	hence, since $|B \Delta B(x)|\ge C$ for some constant $C$ depending only on $\varepsilon$, one can choose $\delta_\varepsilon<\frac{C}{4}$ to have $|\widetilde{E} \Delta B(x)|>\delta_\varepsilon$, concluding that $\delta(\widetilde{E})=\delta(E)$ and the optimal ball is still $B$.
\end{proof}
Now that we can construct a set that is uniformly close to a ball, let us show that if \eqref{quantisop} is not verified with a sufficiently large constant, we can reduce to a nearly-spherical set. Before doing this, we need to show a preliminary result that will ultimately lead to the nearly-spherical set.
\begin{lem}\label{lem:prens}
	There exist two constants $\varepsilon_1 \in (0,1)$ and $\widetilde{C}>0$ depending only on $\beta>0$ and $N$ such that for any $\varepsilon \in (0,\varepsilon_1)$, any $E \subseteq \R^N$ with $|E|=\omega_N$ and any $z \in \R^N$ such that $B_{1-\varepsilon}(z)\subseteq E \subseteq B_{1+\varepsilon}(z)$ one of the following properties holds:
	\begin{itemize}
		\item $E$ satisfies estimate \eqref{quantisop} with the constant $\widetilde{C}$;
		\item There exist two functions $u^{\pm}_z:\mathbb{S}^{N-1}\to [0,\varepsilon)$ such that the set
		\begin{equation*}
		E'_z=\left\{z+tx: \ t \in [0,1-u_z^-(x)) \cup (1,1+u_z^+(x)), \ x \in \mathbb{S}^{N-1}\right\}
		\end{equation*}
		has volume $|E'_z|=\omega_N$, satisfies $\fD_\beta(E'_z)\le \fD_\beta(E)$,  $\delta(E'_z)\ge \frac{\delta(E)}{2}$ and the functions $u^{\pm}_z$ depend continuously on the parameter $z$.
	\end{itemize}
\end{lem}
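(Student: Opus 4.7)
The construction of $u_z^\pm$ is dictated by radial mass balance. For each direction $x \in S^{N-1}$, set $E_{z,x} = \{t \ge 0 : z + tx \in E\}$, which by the inclusion hypothesis satisfies $[0,1-\varepsilon) \subseteq E_{z,x} \subseteq [0,1+\varepsilon)$. Define the radial inner deficit and outer excess
\begin{equation*}
D^-_z(x) = \int_{[0,1] \setminus E_{z,x}} t^{N-1}\,dt, \qquad D^+_z(x) = \int_{E_{z,x} \cap (1,\infty)} t^{N-1}\,dt,
\end{equation*}
and let $u_z^\pm(x) \in [0,\varepsilon)$ be the unique nonnegative solutions of
\begin{equation*}
\frac{1-(1-u_z^-(x))^N}{N} = D^-_z(x), \qquad \frac{(1+u_z^+(x))^N - 1}{N} = D^+_z(x).
\end{equation*}
The identity $|E|=\omega_N$ forces $\int_{S^{N-1}} D^-_z\,d\cH^{N-1} = \int_{S^{N-1}} D^+_z\,d\cH^{N-1}$, which is precisely $|E'_z|=\omega_N$. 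The continuous dependence of $u_z^\pm$ on $z$ follows from dominated convergence applied to $D^\pm_z(x)$, since the inclusion $B_{1-\varepsilon}(z) \subseteq E \subseteq B_{1+\varepsilon}(z)$ is stable under small perturbations of $z$.

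The central step is the energy monotonicity $\fG_\beta(E'_z) \le \fG_\beta(E)$ together with the quantitative linear lower bound
\begin{equation*}
\fG_\beta(E) - \fG_\beta(E'_z) \ge c_0(N,\beta)\, |E \Delta E'_z|,
\end{equation*}
for some constant $c_0 = c_0(N,\beta) > 0$. I would prove it by splitting the map $E \mapsto E'_z$ into two successive radial sub-rearrangements: first replace $E \cap B(z)$ by the inner star-shaped set $\{z + tx: 0 \le t < 1-u_z^-(x)\}$, keeping $E \setminus B(z)$ fixed; then replace the remaining outer part by $\{z + tx : 1 < t < 1+u_z^+(x)\}$. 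Each sub-step is a mass-preserving transport that displaces mass along rays toward $z$. On any pair of rays of directions $x_1,x_2$, the two-point integrand of $\fG_\beta$ takes the explicit form $(t_1^2 + t_2^2 - 2(x_1\cdot x_2)\,t_1 t_2)^{\beta/2}$; a ray-wise analysis in the spirit of Proposition \ref{prop:iRineq}, applied to the strictly increasing kernel $|y_1 - y_2|^\beta$, should yield the monotonicity, and the quantitative linear bound follows since on the portion of $E$ that differs from $E'_z$ the transport moves the mass by a distance bounded below by a positive universal quantity.

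Given the linear bound and the elementary sub-additivity $\delta(E'_z) \ge \delta(E) - |E \Delta E'_z|$, the dichotomy follows immediately. If $|E \Delta E'_z| \le \delta(E)/2$ then $\delta(E'_z) \ge \delta(E)/2$ and the second alternative is realized. Otherwise $\delta(E)/2 < |E \Delta E'_z| \le c_0^{-1}\, \fD_\beta(E)$, and since the inclusion gives a universal upper bound $\fD_\beta(E) \le M(N,\beta)$ for $E \subseteq B_{1+\varepsilon_1}(z)$, this rewrites as $\delta(E) \le 2 c_0^{-1} \sqrt{M}\, \sqrt{\fD_\beta(E)}$, so the first alternative holds with $\widetilde C := 2 c_0^{-1} \sqrt{M}$. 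The main obstacle is the quantitative energy monotonicity itself: the ray-wise transport from $E$ to $E'_z$ is not a pointwise contraction (simple one-dimensional examples on a single ray already show this), so already the qualitative inequality $\fG_\beta(E'_z) \le \fG_\beta(E)$ requires a careful two-ray analysis, and the linear strengthening must further leverage the annular localization of $E \setminus B_{1-\varepsilon}(z)$ to control the displaced mass from below.
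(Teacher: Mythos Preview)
Your construction of $u_z^\pm$ and the volume identity are fine and match the paper. The dichotomy via $\delta(E'_z)\ge \delta(E)-|E\Delta E'_z|$ is also the right split. The genuine gap is the claimed \emph{linear} lower bound
\[
\fG_\beta(E)-\fG_\beta(E'_z)\ \ge\ c_0(N,\beta)\,|E\Delta E'_z|.
\]
This inequality is false as stated, because the radial transport from $E$ to $E'_z$ does \emph{not} move mass by a distance bounded below by a universal constant. On a single ray, if $E_{z,x}=[0,1-2a]\cup[1-a,1]$ for small $a>0$, then $u_z^-(x)\approx a$ and the rearranged ray is $[0,1-a)$; the mass that moves is of order $a$ and it moves by a distance of order $a$, so the energy gain on that ray is $\sim a^2$, not $\sim a$. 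Thus the best one can hope for is a \emph{quadratic} bound $\fG_\beta(E)-\fG_\beta(E'_z)\gtrsim |E\Delta E'_z|^2$, and that is exactly what the paper obtains.

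The paper's route is different in two places. First, instead of a ray-wise Riesz argument, it builds an explicit invertible transport map $\Phi_z$ between $H_z=E'_z\setminus E$ and $K_z=E\setminus E'_z$, splits $\fG_\beta(E)-\fG_\beta(E'_z)$ into the interaction with $B(z)$ plus error terms, and uses the strict monotonicity of the ball potential $\psi(t)=\int_B|y-te_1|^\beta dy$ to get
\[
\fG_\beta(E)-\fG_\beta(E'_z)\ \ge\ 2\bigl(c_\varepsilon-C\delta_\varepsilon^\alpha\bigr)\int_{H_z}|y-\Phi_z(y)|\,dy,
\]
with $c_\varepsilon\to\psi'(1)>0$ as $\varepsilon\to 0$; the error terms are controlled by Lemma~\ref{lem:diff} and the smallness of $|E\setminus B(z)|,|B(z)\setminus E|\le \delta_\varepsilon$. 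Second, to pass from the transport integral to $|E\Delta E'_z|$, the paper uses a slicing argument: on each ray the displaced mass is rearranged monotonically, so at least half of it moves by at least half the ray-length $L_\nu$, giving $\int_{G_\nu}|\Phi_z(y)-y|\,d\cH^1\ge L_\nu^2/8$; Cauchy--Schwarz over $S^{N-1}$ then yields $|H_z|^2\lesssim \int_{H_z}|y-\Phi_z(y)|\,dy$. Combined with $|H_z|\ge \delta(E)/4$ (valid in the branch $\delta(E'_z)\le\delta(E)/2$), this gives $\delta(E)\lesssim\sqrt{\fD_\beta(E)}$. Your argument needs this quadratic step; without it the linear bound simply does not hold.
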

\begin{proof}
	Let us consider the quantities, for any $x \in \mathbb{S}^{N-1}$
	\begin{equation*}
	M^+_z(x)=\int_1^{+\infty}t^{N-1}\chi_{E}(z+tx)dt \mbox{ and } M^-_z(x)=\int_0^{1}t^{N-1}\chi_{\R^N \setminus E}(z+tx)dt.
	\end{equation*}
	These two quantities are continuous functions of $z \in \R^N$ and then, defining $u_z^{\pm}$ by
	\begin{equation*}
	u_z^+(x)=(NM^+_z(x)+1)^{\frac{1}{N}}-1 \mbox{ and } 1-(1-u_z^-(x))^N=1-(1-NM^-_z(x))^{\frac{1}{N}},
	\end{equation*}
	we know that the latter depend continuously on $z \in \R^N$ and
	\begin{equation*}
	\int_1^{1+u^+_z(x)}t^{N-1}dt=M^+_z(x) \mbox{ and } \int^1_{1-u^+_z(x)}t^{N-1}dt=M^-_z(x).
	\end{equation*}
	Now let us construct $E'_z$ as declared and let us consider the following sets
	\begin{align*}
	G^+=(E \setminus E'_z)\setminus B(z) && \widetilde{G}^+=(E'_z \setminus E)\setminus B(z)\\
	G^-=B(z)\cap (E'_z \setminus E) && \widetilde{G}^-=B(z)\cap (E \setminus E'_z)
	\end{align*}
	observing that $\widetilde{G}^+\subseteq B_{1+\varepsilon}(z)\setminus B(z)$ and $\widetilde{G}^- \subseteq B(z)\setminus B_{1-\varepsilon}(z)$ by definition. It is not difficult to check that $|E'_z|=\omega_N$. Indeed, we have
	\begin{align*}
	|E'_z|&=\int_{\mathbb{S}^{N-1}}\left(\int_0^{1-u^-_z(x)}t^{N-1}dt+\int_1^{1+u^-_z(x)}t^{N-1}dt\right)d\cH^{N-1}(x)\\
	&=\int_{\mathbb{S}^{N-1}}\left(1-\int_{1-u^-_z(x)}^1t^{N-1}dt+\int_1^{1+u^-_z(x)}t^{N-1}dt\right)d\cH^{N-1}\\
	&=\int_{\mathbb{S}^{N-1}}\left(\int_{0}^1t^{N-1}(1-\chi_{\R^N \setminus E}(z+tx))dt\right.\\&\qquad\left.+\int_1^{+\infty}t^{N-1}\chi_{\R^N \setminus E}(z+tx)dt\right)d\cH^{N-1}(x)\\
	&=\int_{\mathbb{S}^{N-1}}\int_0^{+\infty}t^{N-1}\chi_{E}(z+tx)dtd\cH^{N-1}(x)=|E|.
	\end{align*}
	Being $|E'_z|=|E|$, then we have $|E'_z\setminus E|=|E \setminus E'_z|$ and, in particular $|\widetilde{G}^\pm|=|G^\pm|$.\\
	Let us define $H_z=\widetilde{G}^+ \cup G^-$ and $K_z=G^+ \cup \widetilde{G}^-$. By definition $|H_z|=|K_z|$. Now let us define the following transport map from $H_z$ to $K_z$. For $y \in H_z$, we define $\Phi_z(y)=\varphi(y)\frac{y-z}{|y-z|}+z$ where, if $|y-z| \ge 1$, we have
	\begin{equation*}
	\int_1^{|y-z|}\chi_{E'_z \setminus E}\left(z+t\frac{y-z}{|y-z|}\right)t^{N-1}dt=\int_1^{\varphi(y)}\chi_{E \setminus E'_z}\left(z+t\frac{y-z}{|y-z|}\right)t^{N-1}dt
	\end{equation*}
	and if $|y-z|<1$
	\begin{equation*}
	\int_{|y-z|}^1\chi_{E'_z \setminus E}\left(z+t\frac{y-z}{|y-z|}\right)t^{N-1}dt=\int_{\varphi(y)}^1\chi_{E \setminus E'_z}\left(z+t\frac{y-z}{|y-z|}\right)t^{N-1}dt.
	\end{equation*}
	This map is actually simple to describe by words. Both $H_z$ and $K_z$ are constituted by a part that is inside the ball $B$ and a part that is outside the ball $B$. The map $\Phi_z$ actually considers the angular coordinate of $y$ with respect to the ball $B(z)$ (i. e. its projection on $\partial B(z)$) and modify its radius in such a way to send the part outside $B$ in $H_z$ to the one outside $B$ in $K_z$ and the same for the inside, while preserving the volume of the part that is moving. This is actually a generalization of a Knothe-Rosenblatt rearrangement as described in Section \ref{Sec2} by means of radial and angular components of the sets (hence with respect to the $1$-dimensional Lebesgue measure on $\R$ and the $N-1$-dimensional Hausdorff measure on $S^{N-1}$). In particular, $\Phi$ is an invertible transport map between $H_z$ and $K_z$.\\
	Now let us work with the energies of $E'_z$ and $E$. We have
	\begin{equation*}
	\fG_\beta(E'_z)=\fG_\beta(E)+\fG_\beta(H_z,E)+\fG_\beta(H_z,E'_z)-\fG_\beta(K_z,E)-\fG_\beta(K_z,E'_z).
	\end{equation*}
	Let us split everything with respect to the ball $B(z)$ to achieve
	\begin{align*}
	\fG_\beta(E'_z)-\fG_\beta(E)&=2\fG_\beta(H_z,B(z))-2\fG_\beta(K_z,B(z))\\&+\fG_\beta(H_z,E \setminus B(z))-\fG_\beta(K_z,E \setminus B(z))\\&+\fG_\beta(H_z,E'_z \setminus B(z))-\fG_\beta(K_z,E'_z \setminus B(z))\\
	&+\fG_\beta(K_z,B(z)\setminus E'_z)-\fG_\beta(H_z,B(z)\setminus E'_z)\\
	&+\fG_\beta(K_z,B(z)\setminus E)-\fG_\beta(H_z,B(z)\setminus E)
	\end{align*}
	By using Equation \eqref{eq:transcontr} we obtain
	\begin{align*}
	\fG_\beta(E)-\fG_\beta(E'_z)&\ge 2\fG_\beta(K_z,B(z))-2\fG_\beta(H_z,B(z))-2C\delta^\alpha_\varepsilon\int_{H_z}|y-\Phi(y)|dy,
	\end{align*}
where $\alpha$ is defined in equation \eqref{eq:alphadef} and $C_1>0$ depends only on $\beta$ and $N$.\\
	Now let us also observe that for $y \in H_z$ it holds $\frac{y-z}{|y-z|}=\frac{\Phi_z(y)-z}{|\Phi_z(y)-z|}$. However $|\Phi_z(y)-z|=\varphi(y)\ge |y-z|$. Being $\psi$ increasing and $C^1$ we have
	\begin{equation*}
	\psi(|\Phi_z(y)-z|)-\psi(|y-z|)\ge c_\varepsilon|y-\Phi_z(y)|
	\end{equation*}
	where $c_\varepsilon=\min_{t \in [1-\varepsilon,1+\varepsilon]}\psi'(t)$. Integrating this relation over $H_z$ we get
	\begin{equation*}
	\fG_\beta(K_z,B(z))-\fG_\beta(H_z,B(z))\ge c_\varepsilon \int_{H_z}|y-\Phi_z(y)|dy
	\end{equation*}
	and then
	\begin{equation}\label{eq:est3}
	\fG_\beta(E)-\fG_\beta(E'_z)\ge  2\left(c_\varepsilon-C_1\delta_\varepsilon^\alpha\right)\int_{H_z}|y-\Phi_z(y)|dy.
	\end{equation}
	Being $\psi'$ continuous near $1$, as $\varepsilon \to 0$ we have that $c_\varepsilon \to \psi'(1)>0$, hence we can consider $\varepsilon_1$ small enough to have $c_\varepsilon>0$ and then $\delta_\varepsilon$ small enough to have $c_\varepsilon-C_1\delta^\alpha_\varepsilon>0$ and finally $\fG_\beta(E)\ge \fG_\beta(E'_z)$.\\
	Let us remark that up to this point we have not used the fact that $E$ does not satisfy Equation \eqref{quantisop}. So now we have to show that one of the properties hold. In particular, let us suppose that $\delta(E'_z) \le \delta(E)/2$. Thus there exists a ball $B'$ such that $\delta(E'_z)=|B'\Delta E'_z|\le \frac{\delta(E)}{2}$. We claim that $E$ satisfies Equation \eqref{quantisop}.\\
	To do this let us first observe that
	\begin{equation*}
	\delta(E)\le |E \Delta B'|\le |E \Delta E'_z|+|E'_z \Delta B'|\le |E \Delta E'_z|+\delta(E)/2
	\end{equation*}
	and that
	\begin{equation*}
	|H_z|=\frac{|E \Delta E'_z|}{2}\le \frac{\delta(E)}{4}.
	\end{equation*}
	Now let us argue by slicing $H_z$. Fix $\nu \in \mathbb{S}^{N-1}$ and let $G_\nu=H \cap \nu \R$ be the section of $H_z$ in direction $\nu$. Let us split this set in the part interior and exterior to the ball $B(z)$, i. e. $G_\nu=G_\nu^+\cup G_\nu^-$ where $G_\nu^+=G_\nu \setminus B(z)$ and $G_\nu^-=G_\nu \cap B(z)$. Now, let us observe that, by construction, $(E'_z \setminus B) \cap \nu\R$ is the segment $(1,1+u_z^+(\nu))\nu$. Thus, by construction, $\widetilde{G}^+ \cap \nu\R$ is in this segment and $G^+ \cap \nu \R$ is outside the segment. A similar argument holds for $G^-$ and $\widetilde{G}^-$.
	Thus, if we set $L_\nu^\pm=\cH^1(G_\nu^\pm)$, since the subset of $G_\nu^{+}$ made by those points for which $|\Phi_z(y)-y|\ge \frac{L^+_\nu}{2}$ has length at least $\frac{L^+_\nu}{2}$, we have
	\begin{equation*}
	\int_{G_\nu^+}|\Phi_z(y)-y|d\cH^1(y)\ge \left(\frac{L^+_\nu}{2}\right)^2.
	\end{equation*}
	Arguing in the same way, we have
	\begin{equation*}
	\int_{G_\nu^-}|\Phi_z(y)-y|d\cH^1(y)\ge \left(\frac{L^-_\nu}{2}\right)^2,
	\end{equation*}
	and then summing
	\begin{equation*}
	\int_{G_\nu}|\Phi_z(y)-y|d\cH^1(y)\ge \left(\frac{L^-_\nu}{2}\right)^2+\left(\frac{L^+_\nu}{2}\right)^2\ge\frac{L_\nu^2}{8}
	\end{equation*}
	where $L(\nu)=\cH^1(G_\nu)$. Now let us reconstruct the measure of $H_z$ in terms of the sections. We have, by Coarea formula,
	\begin{align*}
	|H_z|&\le (1+\varepsilon)^{N-1}\int_{\mathbb{S}^{N-1}}L_\nu d\cH^{N-1}(\nu)\\
	&\le (1+\varepsilon)^{N-1}\sqrt{N\omega_N}\sqrt{\int_{\mathbb{S}^{N-1}}L^2_\nu d\cH^{N-1}(\nu)}\\
	&\le (1+\varepsilon)^{N-1}\sqrt{8N\omega_N}\sqrt{\int_{\mathbb{S}^{N-1}}\int_{G_\nu}|\Phi_z(y)-y|d\cH^1(y)\cH^{N-1}(\nu)}\\
	&\le \frac{(1+\varepsilon)^{N-1}}{(1-\varepsilon)^{\frac{N-1}{2}}}\sqrt{8N\omega_N}\sqrt{\int_{H_z}|\Phi_z(y)-y|dy}.
	\end{align*}
	However, we also have $|H_z|\ge \frac{\delta(E)}{4}$, hence
	\begin{equation*}
	\delta(E)\le 4\frac{(1+\varepsilon)^{N-1}}{(1-\varepsilon)^{\frac{N-1}{2}}}\sqrt{8N\omega_N}\sqrt{\int_{H_z}|\Phi_z(y)-y|dy}.
	\end{equation*}
	On the other hand, we have shown in Equation \eqref{eq:est3} that
	\begin{equation*}
	\int_{H}|\Phi_z(y)-y|dy\le C(\fG_\beta(E)-\fG_\beta(E'_z))\le C \fD_\beta(E) 
	\end{equation*}
	where, for $\varepsilon$ and $\delta_\varepsilon$ small enough, $C$ is a universal constant. Thus we have, for $\varepsilon$ small enough,
	\begin{equation*}
	\delta(E)\le \widetilde{C} \sqrt{\fD_\beta(E)}
	\end{equation*}
	where $\widetilde{C}$ is some universal constant. This completes the proof. 
\end{proof}
Now that we have proved this intermediate step, we can use an approximation of the functions $u^{\pm}_z$ by a locally constant function to construct the nearly-spherical set we are searching for.
\begin{prop}\label{propns}
	There exists a positive constant $\varepsilon_1$, depending only on $N$ and $\beta$, such that for any $\varepsilon \in (0,\varepsilon_1)$, for any $E \subseteq \R^N$ with $|E|=\omega_N$, $E \subseteq B_R$, where $R$ is the radius defined in Lemma \ref{lembound}, and for any $z \in \R^N$ such that $B_{1-\varepsilon}(z)\subseteq E \subseteq B_{1+\varepsilon}(z)$, one of the two following properties hold:
	\begin{itemize}
		\item $E$ satisfies estimate \eqref{quantisop} with the constant $\widetilde{C}$ defined in Lemma \ref{lem:prens};
		\item There exists a set $E_z$ that is nearly-spherical around $B(z)$, $E_z=z+E_{1,u_z}$ where $E_{1,u_z}$ is defined in \eqref{eq:nearlyspherical}, $u_z$ depends continuously on z, $\Norm{u_z}{L^\infty(\mathbb{S}^{N-1})}\le \varepsilon$, $\fD_\beta(E_z)\le 2\fD_\beta(E)$ and $|E_z \Delta B(z)|\ge \frac{\delta(E)}{6}$. Moreover, the baricenter ${\rm Bar}(z)$ of $E_z$ depends continuously on $z$.
	\end{itemize}  
\end{prop}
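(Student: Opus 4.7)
The plan is to invoke Lemma~\ref{lem:prens} to split into two cases: the first alternative of the lemma gives the first bullet of the proposition directly; in the second alternative, the two-sided radial set $E'_z$ produced there is converted into a one-sided nearly-spherical set $E_z$ via a locally constant ``bump-or-dent'' construction on a fixed partition of $S^{N-1}$. Assume we are in the second alternative, so that we have continuous (in $z$) functions $u^{\pm}_z : S^{N-1}\to [0,\varepsilon)$ and the radial set $E'_z$ with $|E'_z|=\omega_N$, $\fD_\beta(E'_z)\le \fD_\beta(E)$, and $\delta(E'_z)\ge \delta(E)/2$.

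Fix a measurable partition $\{A_1,\dots,A_k\}$ of $S^{N-1}$ with maximum piece diameter $\eta$, together with an auxiliary measurable reference function on each $A_i$ (chosen once and for all, independently of $z$; $\eta$ will be tuned below in terms of $\fD_\beta(E)$ and $\varepsilon$). Let $\overline{u}^{\pm}_{z,i}$ denote the average of $u^{\pm}_z$ on $A_i$, and split $A_i$ into two disjoint sublevel sets $A_i^{+}(z),A_i^{-}(z)$ of the reference function whose measures $\mu_i^{\pm}(z)\ge 0$ are uniquely determined by the volume-preservation relation
\begin{equation*}
\mu_i^{+}(z)(1+\overline{u}^{+}_{z,i})^N + \mu_i^{-}(z)(1-\overline{u}^{-}_{z,i})^N = \cH^{N-1}(A_i)\bigl[(1-\overline{u}^{-}_{z,i})^N + (1+\overline{u}^{+}_{z,i})^N - 1\bigr]
\end{equation*}
subject to $\mu_i^{+}(z)+\mu_i^{-}(z)=\cH^{N-1}(A_i)$. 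Then define
\begin{equation*}
u_z:=\sum_{i=1}^{k}\bigl(\overline{u}^{+}_{z,i}\chi_{A_i^{+}(z)}-\overline{u}^{-}_{z,i}\chi_{A_i^{-}(z)}\bigr), \qquad E_z:=z+E_{1,u_z}.
\end{equation*}
By construction $|E_z|=\omega_N$ and, since $|\overline{u}^{\pm}_{z,i}|<\varepsilon$, also $\Norm{u_z}{L^\infty(S^{N-1})}\le\varepsilon$. From $|E_z\Delta B(z)|=\frac{1}{N}\int_{S^{N-1}}|(1+u_z)^N-1|\,d\cH^{N-1}$, comparing with the analogous formula for $|E'_z\Delta B(z)|$, the two quantities agree up to an error $O(\eta)$ coming from the local averaging, so $\eta$ small enough gives $|E_z\Delta B(z)|\ge \delta(E'_z)-\delta(E)/3\ge \delta(E)/6$.

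The deficit bound is the main obstacle. Construct an invertible transport map $\Psi_z:E'_z\to E_z$ acting cone by cone: within the cone $\{z+rx:x\in A_i,\ r>0\}$, redistribute the outer-annular mass of $E'_z$ into the bump region over $A_i^{+}(z)$ and use the inner-ball content of $E'_z$ to realize the dent structure over $A_i^{-}(z)$, in the spirit of a Knothe--Rosenblatt rearrangement. The angular displacement is bounded by $\eta$ and the radial displacement by $2\varepsilon$. Applying Lemma~\ref{lem:diff} twice, decomposing $\fG_\beta(E_z)-\fG_\beta(E'_z)=[\fG_\beta(E_z,E_z)-\fG_\beta(E'_z,E_z)]+[\fG_\beta(E_z,E'_z)-\fG_\beta(E'_z,E'_z)]$, yields
\begin{equation*}
|\fG_\beta(E_z)-\fG_\beta(E'_z)|\le C\,\omega_N^\alpha\int_{E'_z}|y-\Psi_z(y)|\,dy,
\end{equation*}
with $\alpha$ as in Lemma~\ref{lem:diff}. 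Since $\eta$ depends on the (fixed) set $E$ but not on $z$, it can be chosen small enough that the right-hand side does not exceed $\fD_\beta(E)$ (one may trivially assume $\fD_\beta(E)>0$, else $E$ is already a ball). Combined with $\fD_\beta(E'_z)\le\fD_\beta(E)$, this gives $\fD_\beta(E_z)\le 2\fD_\beta(E)$.

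Finally, continuity of $u_z$ in $z$ follows from continuity of the $u^{\pm}_z$, continuity of $\mu_i^{\pm}(z)$ in the averages, and the sublevel-set description of $A_i^{\pm}(z)$; continuity of $\mathrm{Bar}(z)$ is then standard. The delicate point behind the whole argument is that the pointwise radial construction (determining $u_z$ by pointwise radial volume preservation) fails for near-sharp configurations in which $u^{+}_z\approx u^{-}_z$ pointwise, since such configurations produce a trivial $u_z\approx 0$ and destroy the asymmetry. The bump/dent split circumvents this by separating bumps from dents on a local angular scale, while the angular transport error can be made arbitrarily small because the partition is allowed to depend on the fixed set $E$.
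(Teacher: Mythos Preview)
Your deficit estimate has a genuine gap. You claim that $\int_{E'_z}|y-\Psi_z(y)|\,dy$ can be driven below any prescribed threshold (in particular below $C^{-1}\fD_\beta(E)$) by taking $\eta$ small. But only the \emph{angular} displacement of $\Psi_z$ is controlled by $\eta$; the \emph{radial} displacement is not. Concretely, the outer-annular mass of $E'_z$ lying over $A_i^{-}(z)$ must be moved into the hole $(1-\overline u^{-}_{z,i},1)$ over $A_i^{+}(z)$, and this forces a radial displacement of order $\overline u^{+}_{z,i}+\overline u^{-}_{z,i}$, independent of $\eta$. Summing over $i$, your transport integral therefore contains an irreducible contribution of order $\int_{S^{N-1}}u^{+}_z\,u^{-}_z\,d\cH^{N-1}$; in the regime $u^{+}_z\approx u^{-}_z$ this is $\gtrsim \|u^{+}_z\|_{L^2}^2\gtrsim \delta(E)^2$. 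Since you are precisely in the alternative where the first bullet fails, i.e.\ $\fD_\beta(E)<\delta(E)^2/\widetilde C^{2}$, the bound you obtain on $|\fG_\beta(E_z)-\fG_\beta(E'_z)|$ is at best of order $\delta(E)^2$, and no choice of $\eta$ brings it below $\fD_\beta(E)$.

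The paper avoids this by not bounding the absolute value at all in the second step. It first passes to an intermediate set $E''_z$ built from locally constant $\widetilde u^{\pm}_z$, with the approximation chosen fine enough (depending on $\fD_\beta(E)$) that $\fD_\beta(E''_z)\le 2\fD_\beta(E)$; this is the same idea you use. The crucial difference is the \emph{additional} constraint ${\rm diam}(U_i)\le \min\{u^{+}_{z,i},u^{-}_{z,i}\}$ imposed on the partition. This forces the angular displacement of the transport map $\Phi$ from $\widetilde F\setminus F$ to $\widetilde D\setminus D$ to be dominated by the radial drop, giving $|y-\Phi(y)|\le 2(|y|-|\Phi(y)|)$. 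One then splits $\fG_\beta(E''_z)-\fG_\beta(E_z)$ with respect to $B(z)$ and uses the strict monotonicity of $\psi(t)=\int_B|te_1-x|^\beta dx$ exactly as in Lemma~\ref{lem:prens} to get a \emph{sign}: $\fG_\beta(E_z)\le \fG_\beta(E''_z)$, with the error terms absorbed because they carry an extra factor $((1+\varepsilon)^N-(1-\varepsilon)^N)^\alpha$. Your fixed-$\eta$ partition discards exactly the geometric relation between the cell diameter and the local values of $u^{\pm}_z$ that makes this sign argument work.
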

\begin{proof}
	Let us suppose $E$ does not satisfy inequality \eqref{quantisop} with the constant $\widetilde{C}$ defined in Lemma \ref{lem:prens}. Let us consider the set $E'_z$ defined in Lemma \ref{lem:prens} and $u^\pm_z:\mathbb{S}^{N-1} \to [0,\varepsilon)$ as before.\\
	Being $u^{\pm}_z$ non-negative, there exist two locally constant functions $\widetilde{u}^{\pm}_z$, close in $L^\infty(S^{N-1})$ to $u^{\pm}_z$ as much as we want, with values $u^\pm_{z,i}$ on a family of finitely many measurable sets $U_i \subseteq S^{N-1}$ such that ${\rm diam}(U_i)\le \min\{u_{z,i}^+,u_{z,i}^-\}$ whenever $\min\{u_{z,i}^+,u_{z,i}^-\}>0$. Now we define $E''_z$ as
	\begin{equation*}
	E''_z=\left\{z+tx: \ t \in [0,1-\widetilde{u}_z^-(x)) \cup (1,1+\widetilde{u}_z^+(x)), \ x \in \mathbb{S}^{N-1}\right\}.
	\end{equation*}
	Note that we can choose $u_{z,i}^\pm$ in such a way that $|E''_z|=\omega_N$ and so that $\widetilde{u}^{\pm}_z$ depends continuously on $z$.\\
	By Lemma \ref{quantisop}, we have $\fD_\beta(E'_z)\le \fD_\beta(E)$, hence, since $\widetilde{u}_z^{\pm}$ can be chosen uniformly close to $u_z^{\pm}$ as we need, we may assume that $\fD_\beta(E''_z)\le 2\fD_\beta(E)$. For the same reason we may also assume that $\delta(E''_z)\ge \frac{\delta(E_z')}{3}$.\\
	Now let us construct the nearly spherical set $E_z$. To do this, let us work locally on each $U_i$. If $\min\{u^+_{z,i},u^-_{z,i}\}=0$, then we define, for any $\omega \in U_i$, $u_z(\omega)=\pm u_{z,i}^\pm$ if $u_{z,i}^\mp=0$. On the other hand, if $\min\{u^+_{z,i},u^-_{z,i}\}>0$ we can subdivide $U_i$ in two sets $L_i$ and $R_i$ such that
	\begin{equation*}
	\cH^{N-1}(L_i)(1-(1-u_{z,i}^-)^N)=\cH^{N-1}(R_i)((1+u_{z,i}^+)^N-1)
	\end{equation*}
	and define $u_z$ as $u_z(\omega)=\chi_{L_i}u_{z,i}^+-\chi_{R_i}u_{z,i}^-$ for $\omega \in U_i$. Moreover, since we have chosen $u_{z,i}^\pm$ in such a way that $\widetilde{u}_z^\pm$ depend with continuity on $z$, also $u_z$ is continuous with respect to $z$.\\
	Finally, let us define $E_z$ as
	\begin{equation*}
	E_z=\{z+(1+\rho)x: x \in \mathbb{S}^{N-1}, -1 \le \rho \le u_z(x)\}.
	\end{equation*}
	Note that $E_z$ is nearly spherical. Moreover, observe that since $u_z$ depends continuously on $z$, then also ${\rm Bar}(z)$ is a continuous function of $z$.\\
	By construction we have $|E_z|=\omega_N$. Let us first work with the symmetric difference of $E_z$ with respect to the ball $B(z)$. To do this, let us consider the different contributes of $E_z$ and $E''_z$ outside and inside the ball, as
	\begin{equation*}
	F=E_z \setminus B(z), \quad D=B(z) \setminus E_z, \quad \widetilde{F}=E''_z\setminus B(z), \quad \widetilde{D}=B(z)\setminus E''_z,
	\end{equation*}
	since the behaviour of $u_z$ is different depending on the sets $U_i$ in which it is defined, it can be useful to consider the cones $K_i$ with vertices in $z$ and such that $K_i \cap \partial B_z=z+U_i$, and define
	\begin{equation*}
	F_i=F \cap K_i, \quad D_i=D \cap K_i, \quad \widetilde{F}_i=\widetilde{F}\cap K_i, \quad \widetilde{D}_i=\widetilde{D} \cap K_i.
	\end{equation*}
	By construction of $u_z$, we have $-\widetilde{u}_z^- \le u_z \le \widetilde{u}_z^+$ on the whole sphere, so in particular $F \subseteq \widetilde{F}$ and $D \subseteq \widetilde{D}$.\\
	In particular, the same inclusions hold for any $F_i,\widetilde{F}_i, D_i, \widetilde{D}_i$. However, $F_i$ is empty if and only if $u^+_{z,i}=0$, which implies also that $\widetilde{F}_i$ is empty. If it is not, then let us distinguish two cases. If $u^-_{z,i}=0$, then $\widetilde{F}_i=F_i$. Otherwise, it holds $\min\{u^-_{z,i},u^+_{z,i}\}>0$, and then we have
	\begin{equation*}
	\widetilde{F}_i=(L_i \cup R_i)\times (1,1+u^+_{z,i}).
	\end{equation*}
	On the other hand, we have $u_z=-u^-_{z,i}$ for $x \in R_i$ and $u_z=u^+_{z,i}$ for $x \in L_i$, hence
	\begin{equation*}
	F_i=L_i \times (1,1+u^+_{z,i}).
	\end{equation*}
	With the same reasoning on $D_i$ we obtain:
	\begin{equation*}
	D_i=\begin{cases} \widetilde{D}_i & \min\{u^+_{z,i},u^-_{z,i}\}=0 \\
	R_i \times (1-u^-_{z,i},1) & \min\{u^+_{z,i},u^-_{z,i}\}>0
	\end{cases}
	\end{equation*}
	while, if $\min\{u^+_{z,i},u^-_{z,i}\}>0$, $\widetilde{D}_i=(L_i \cup R_i)\times (1-u^-_{z,i},1)$. Hence, if $\min\{u^+_{z,i},u^-_{z,i}\}=0$, then we easily obtain $|D_i|+|F_i|=|\widetilde{D}_i|+|\widetilde{F}_i|$.\\
	Now let us consider the case in which $\min\{u^+_{z,i},u^-_{z,i}\}>0$. We have
	\begin{equation*}
	|F_i|+|D_i|=\cH^{N-1}(L_i)((1+u_{z,i}^+)^N-1)+\cH^{N-1}(R_i)(1-(1-u_{z,i}^-)^N)
	\end{equation*}
	while
	\begin{equation*}
	|\widetilde{F}_i|+|\widetilde{D}_i|=(\cH^{N-1}(L_i)+\cH^{N-1}(R_i))((1+u_{z,i}^+)^N-1)+(1-(1-u_{z,i}^-)^N)).
	\end{equation*}
	From these two relations it is easy to check that
	\begin{equation*}
	|F_i|+|D_i|\ge \frac{|\widetilde{F}_i|+|\widetilde{D}_i|}{2}
	\end{equation*}
	for any $i$. Finally, summing over $i$, we obtain
	\begin{equation*}
	|E_z \Delta B(z)|\ge \frac{|E''_z \Delta B(z)|}{2}\ge\frac{\delta(E''_z)}{2}\ge \frac{\delta(E)}{6}.
	\end{equation*}
	Now let us work with $\fG_\beta(E_z)$. To do this, we need to construct an invertible transport map between $\widetilde{F}_i \setminus F_i$ and $\widetilde{D}_i \setminus D_i$ only for $i$ such that $\min\{u^+_{z,i},u^-_{z,i}\}>0$, since in the other case we have the equalities $\widetilde{F}_i=F_i$ and $\widetilde{D}_i=D_i$. First let us remark that, by construction, $|\widetilde{F}_i \setminus F_i|=|\widetilde{D}_i \setminus D_i|$. By definition of $R_i$ and $L_i$, we have that $\chi_{R_i}$ and $\frac{1-(1-u_{z,i}^-)^N}{(1+u_{z,i}^+)^N-1}\chi_{L_i}$ admit the same $L^1$ norm with respect to the Hausdorff measure $\cH^{N-1}$, thus we can construct an invertible transport map between them. In particular, let us construct a Knothe-Rosenblatt transport map $\tau_i$ between $R_i$ and $L_i$ that preserve such norms (thus by disintegration and conditioning). More precisely, defining
	\begin{equation*}
	g_i(t)=\left(\frac{\cH^{N-1}(R_i)(t^N-1)+\cH^{N-1}(L_i)(1-u_{z,i}^-)^N}{\cH^{N-1}(L_i)}\right)^{\frac{1}{N}}
	\end{equation*}
	we can construct the transport map $\Phi_i: \widetilde{F_i}\setminus F_i \to \widetilde{D_i}\setminus D_i$ as
	\begin{equation*}
	\Phi_i(t\nu)=g_i(t)\tau_i(\nu) \quad \ \forall \nu \in R_i, \ t \in(1,1+u_{z,i}^+).
	\end{equation*}
	In particular, this is an invertible transport map and the volume distortion caused by the Knothe-Rosenblatt rearrangement $\tau_i$ (which preserves instead the $L^1(\cH^{N-1})$ norms of $\chi_{R_i}$ and $\frac{1-(1-u_{z,i}^-)^N}{(1+u_{z,i}^+)^N-1}\chi_{L_i}$) is balanced by the distortion on the interval given by $g_i$ (to preserve the volume of the whole set $|\widetilde{F}_i\setminus F_i|$ onto $|\widetilde{D}_i\setminus D_i|$).\\
	Moreover, $\tau_i(\nu)$ is a transport map on the sphere, so $|\tau_i(\nu)|=1$. Thus we have
	\begin{align*}
	|y|-|\Phi_i(y)|&=t-g_i(t)\ge \min\{u_{i,z}^+,u_{i,z}^-\}\ge {\rm diam}\,  U_i.
	\end{align*}
	On the other hand we have, since $g_i(t)\le 1$
	\begin{align*}
	|y-\Phi_i(y)|&\le |t-g_i(t)|+g_i(t)|\nu-\tau_i(\nu)|\\
	&\le (|y|-|\Phi_i(y)|)+g_i(t)|\nu-\tau_i(\nu)|\\
	&\le (|y|-|\Phi_i(y)|)+{\rm diam} U_i \\
	&\le 2(|y|-|\Phi_i(y)|).
	\end{align*}
	We can \textit{glue} all the $\Phi_i$ to construct a transport map $\Phi:\widetilde{F}\setminus F \to \widetilde{D}\setminus F$.\\
	Now we are ready to evaluate the energy of $E_z$. To do this, let us observe that
	\begin{align*}
	\fG_\beta(E_z'')-\fG_\beta(E_z)&=2\fG_\beta(B,\widetilde{F}\setminus F)-2\fG_\beta(B,\widetilde{D}\setminus D)\\
	&+\fG_\beta(\widetilde{D},\widetilde{D}\setminus D)-\fG_\beta(\widetilde{D},\widetilde{F}\setminus F)\\
	&+\fG_\beta(D,\widetilde{D}\setminus D)-\fG_\beta(D,\widetilde{F}\setminus F)\\
	&+\fG_\beta(\widetilde{F},\widetilde{F}\setminus F)-\fG_\beta(\widetilde{F},\widetilde{D}\setminus D)\\
	&+\fG_\beta(F,\widetilde{F}\setminus F)-\fG_\beta(F,\widetilde{D}\setminus D).
	\end{align*}
	By Equation \eqref{eq:transcontr} we know that
	\begin{align*}
	\fG_\beta(\widetilde{D},\widetilde{D}\setminus D)-\fG_\beta(\widetilde{D},\widetilde{F}\setminus F)&+\fG_\beta(D,\widetilde{D}\setminus D)-\fG_\beta(D,\widetilde{F}\setminus F)\\
	&+\fG_\beta(\widetilde{F},\widetilde{F}\setminus F)-\fG_\beta(\widetilde{F},\widetilde{D}\setminus D)\\
	&+\fG_\beta(F,\widetilde{F}\setminus F)-\fG_\beta(F,\widetilde{D}\setminus D)\\
	&\ge -C_1((1+\varepsilon)^N-(1-\varepsilon)^N)^\alpha\int_{\widetilde{F}\setminus F}|y-\Phi(y)| dy,
	\end{align*}
	for some constant $C_1$ depending only on $\beta$ and $N$ and $\alpha$ defined in \eqref{eq:alphadef}.
	On the other hand, denoting by $c_\varepsilon=\min_{t \in (1-\varepsilon,1+\varepsilon)}\psi'(t)$, we have, arguing as in Lemma \ref{lem:prens},
	\begin{equation*}
	\fG_\beta(B,\widetilde{F}\setminus F)-\fG_\beta(B,\widetilde{D}\setminus D)\ge c_\varepsilon\int_{\widetilde{F}\setminus F}|y-\Phi(y)|dy
	\end{equation*}
	hence
	\begin{align*}
	\fG_\beta(E_z'')-\fG_\beta(E_z)\ge\left(c_\varepsilon-C_1((1+\varepsilon)^N-(1-\varepsilon)^N)^\alpha\right)\int_{\widetilde{F}\setminus F}|y-\Phi(y)|dy.
	\end{align*}
	As $\varepsilon \to 0$ we have that $c_\varepsilon \to \psi'(1)>0$, hence we can consider $\varepsilon$ small enough to have $c_\varepsilon-C_1((1+\varepsilon)^N-(1-\varepsilon)^N)^\alpha>0$ and finally
	\begin{equation*}
	\fG_\beta(E_z'')-\fG_\beta(E_z)\ge 0,
	\end{equation*}
	that implies
	\begin{equation*}
	\fD_\beta(E_z)\le \fD_\beta(\widetilde{E}'')\le 2 \fD_\beta(E).
	\end{equation*}
\end{proof}
Now we turn to the fourth step of our plan. To do this, we have to show that the function ${\rm Bar}(z)$ admits a fixed point. Since this can be done by means of \cite[Lemmas $2.14$ and $2.15$]{fusco2019sharp}, we omit the proof.
\begin{lem}\label{baradj}
	Under the hypotheses of Proposition \ref{propns}, we can construct the set $E_z$ in such a way that ${\rm Bar}(z)=z$.
\end{lem}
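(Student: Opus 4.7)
\emph{Plan.} Let $z^*\in\mathbb{R}^N$ be a point realizing the asymmetry of $E$, so that $\delta(E)=|E\Delta B(z^*)|$. By Proposition \ref{propns}, the map $\Phi\colon z\mapsto \mathrm{Bar}(z)$ is well-defined and continuous on a small closed ball $\overline{B_\rho}(z^*)$. I will find $z_0\in\overline{B_\rho}(z^*)$ with $\Phi(z_0)=z_0$ by a topological degree argument; the corresponding set $E_{z_0}$ from Proposition \ref{propns} will then satisfy all the requirements.

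\emph{Baricenter identity.} First I would compute, in polar coordinates and using that the baricenter of $B(z)$ equals $z$,
$$
\Phi(z)-z=\frac{1}{\omega_N(N+1)}\int_{S^{N-1}}\bigl((1+u_z(\omega))^{N+1}-1\bigr)\,\omega\,d\mathcal{H}^{N-1}(\omega),
$$
so that the displacement $\Phi(z)-z$ is, to leading order in $\varepsilon$, the first spherical-harmonic projection of $u_z$. Next I would establish a quantitative estimate of the form $|\Phi(z)-z^*|<\rho$ on the sphere $\partial B_\rho(z^*)$ for an appropriate choice of $\rho$: since $E$ has mass concentrated near $B(z^*)$ and $u_z$ is extracted from this \emph{fixed} set $E$ via the radial slicing in Lemma \ref{lem:prens}, the first-harmonic component of $u_z$ shifts with $z$ precisely so as to keep $\Phi(z)$ close to $z^*$ rather than drifting with $z$.

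\emph{Degree argument.} With such an estimate at hand, I would consider the affine homotopy
$$
H(t,z)=(1-t)(z-z^*)+t\bigl(z-\Phi(z)\bigr),\qquad t\in[0,1],\quad z\in\overline{B_\rho}(z^*).
$$
A zero of $H(t,\cdot)$ on $\partial B_\rho(z^*)$ would force $|z-z^*|=t|\Phi(z)-z^*|<\rho$, a contradiction; hence the homotopy is admissible and, by invariance of the Brouwer degree,
$$
\deg\bigl(\mathrm{Id}-\Phi,\,B_\rho(z^*),\,0\bigr)=\deg\bigl(\mathrm{Id}-z^*,\,B_\rho(z^*),\,0\bigr)=1,
$$
producing a fixed point $z_0\in B_\rho(z^*)$ of $\Phi$, as desired.

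\emph{Main obstacle.} The crux is the estimate $|\Phi(z)-z^*|<\rho$ on $\partial B_\rho(z^*)$. The naive bound $|\Phi(z)-z|\le C\varepsilon$ following from $\|u_z\|_{L^\infty}\le\varepsilon$ is too weak, since it allows $\Phi(z)$ to drift with $z$ at the same rate as the identity, which would block both a direct application of Brouwer on $\overline{B_\rho}(z^*)$ and the homotopy above. To close the gap one must exploit that $u_z$ is not an arbitrary small function of $z$ but is extracted from the fixed set $E$ via the slicing construction of Lemma \ref{lem:prens}, and track carefully how the first-harmonic mode of $u_z$ depends on $z$. This quantitative analysis of the dependence $z\mapsto u_z$ is exactly the content of Lemmas 2.14--2.15 of \cite{fusco2019sharp}, which the author invokes rather than reproducing.
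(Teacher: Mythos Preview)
Your sketch is correct and matches the paper's approach exactly: the paper omits the proof entirely and simply invokes \cite[Lemmas~2.14 and~2.15]{fusco2019sharp}, which carry out precisely the continuity-plus-degree (fixed-point) argument you outline, including the delicate estimate on how $\Phi(z)$ tracks $z^*$ rather than $z$. Your identification of the ``main obstacle'' and its resolution via those lemmas is spot on, so there is nothing to add.
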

\subsection{Proof of Theorem \ref{thm:quantrandisop}}
Now we have all the tools we need to prove Theorem \ref{thm:quantrandisop}.
\begin{proof}[Proof of Theorem \ref{thm:quantrandisop}]
	Let us consider $\varepsilon_1$ as defined in Proposition \ref{propns} and $\varepsilon_0$ as defined in Theorem \ref{nrthm}. Fix $\varepsilon \in \left(0,\min\left\{\varepsilon_1,\frac{\varepsilon_0}{2},1\right\}\right)$ and define $\delta_\varepsilon>0$ as in Lemma \ref{nearaball}. Now consider $\delta_0$ as in Lemma \ref{lembound} and fix $\mu \in (0,\min\{\delta_\varepsilon,\delta_0\})$. Fix $R$ as in Lemma \ref{lembound} and consider $\eta>0$ as in Lemma \ref{lemcompact} associated to this $\mu$.\\
	If $\delta(E)\ge \mu$, then $\fD_\beta(E)\ge \eta$ and we have
	\begin{equation*}
	\delta(E)\le 2\omega_N\le\frac{2\omega_N}{\sqrt{\eta}}\sqrt{\fD_\beta(E)}.
	\end{equation*} 
	Now suppose $\delta(E)<\mu$. Then, since $\delta(E)<\delta_0$, by Lemma \ref{lembound}, we can construct a set $\widetilde{E} \subseteq B_R$ such that $\delta(\widetilde{E})=\delta(E)$ and $\fD_\beta(\widetilde{E})\le K \fD_\beta(E)$ where $K$ depends only on $\beta$ and $N$. Now, by Lemma \ref{nearaball}, since $\delta<\delta_\varepsilon$, we can construct a set $E'$ with $B_{1-\varepsilon}\subseteq E' \subseteq B_{1+\varepsilon}$, $\fD_\beta(E')\le \fD(\widetilde{E})\le K \fD_\beta(E)$ and $\delta(E')=\delta(\widetilde{E})=\delta(E)$.\\
	If this set satisfies \eqref{quantisop} with the constant $\widetilde{C}$ defined in Lemma \ref{lem:prens}, we conclude the proof. Otherwise, we can use Proposition \ref{propns} (since $\varepsilon<\varepsilon_1$) to construct a nearly-spherical set $E''$ with volume $|E''|=\omega_N$ and barycentre in the origin (by also using Lemma \ref{baradj} and then translating the set in such a way that $z=0$). In particular, $|E'' \Delta B| \ge \frac{\delta(E)}{6}$ and $\fD_\beta(E'')\le 2K \fD_\beta(E)$. Moreover, since $\varepsilon<\frac{\varepsilon_0}{2}$, we can write
	\begin{equation*}
	E''=\{x \in \R^N: \ x=\rho z, \ z \in S^{N-1}, \ \rho \in (0,1+tu(z)]\}
	\end{equation*}
	with $\Norm{u}{L^\infty(S^{N-1})}\le 1/2$ and $t \in (0,\varepsilon_0)$. Thus, by Theorem \ref{nrthm}, we know that there exists a constant $C$ such that
	\begin{equation*}
	|E'' \Delta B|\le C \sqrt{\fD_\beta(E)}
	\end{equation*}
	concluding the proof.
\end{proof}
\begin{rmk}
	Comparing the constant $C(N,\beta)$ in Theorem \ref{thm:quantrandisop} with the one in Theorem \ref{nrthm} and by Remark \ref{rmk:asympD} we obtain $C(N,\beta)\ge \sqrt{\frac{8N\omega_N}{D_\beta}}$,
where $D_\beta$ is defined in Equation \eqref{eq:Dbeta}. The latter inequality, together with Remark \ref{rmk:asympD}, implies
\begin{align*}
	\liminf_{\beta \to \infty}C(N,\beta)\sqrt{\frac{\widetilde{C}_N^\infty 2^\beta \beta^{-\frac{N+1}{2}}}{N\omega_N}}\ge 1, && 	\liminf_{\beta \to 0^+}C(N,\beta)\sqrt{\frac{\widetilde{C}_N^0 \beta}{N\omega_N}}\ge 1,
\end{align*} 
where $\widetilde{C}_N^\infty$ and $\widetilde{C}_N^0$ are defined in Formula \eqref{eq:asympconst}.
\end{rmk}
\section{The minimizer of a mixed energy with a perimeter penalization}\label{Sec6}
Now let us consider the mixed energy functional
\begin{equation}\label{mixfunc}
\fE(E)=\fG_\beta(E)+\varepsilon P_s(E)+V_\alpha(E)
\end{equation}
for measurable sets $E \subseteq \R^N$, where $\beta>0$, $\varepsilon>0$, $\alpha \in (0,N)$, $s \in (0,1]$, $P_s$ is the fractional perimeter defined in Equation \eqref{fracper} for $s \in (0,1)$, $P_1:=P$ is the classical perimeter and $V_\alpha$ is the Riesz potential defined in Equation \eqref{Rieszpot}. We want to find a minimizer of $\fE$ under the volume constraint $|E|=m$.
Let us first recall that $P_s$ (for $s \in (0,1]$) satisfies the following isoperimetric inequality (see \cite{frank2008hardy}), setting $|E|=m$ and for fixed $s \in (0,1]$,
\begin{equation*}
P_s(E)\ge P_s(B[m]),
\end{equation*}
while the Riesz potential is maximized by the ball, i.e. for any $\alpha \in (0,N)$, by Riesz rearrangement inequality, it holds
\begin{equation*}
V_\alpha(E)\le V_\alpha(B[m]),
\end{equation*}
where $B[m]$ is the ball of volume $m$ and the equality holds in both inequality if and only if $E$ is a ball.
We want to show that there exists a critical mass $m_0$ such that if $m>m_0$, the ball is a minimizer of \eqref{mixfunc} for $\varepsilon>\varepsilon_0$, where $\varepsilon_0$ may depend on $m$. Note that if $\alpha \in (1,N)$ this result follows from \cite{frank2019proof} where it is proved that there exists a critical mass $m_0$ such that, if $m>m_0$, then the ball of mass $m$ is a minimizer for the mixed energy $\fG_\beta+V_\alpha$. On the other hand, by the result proved in \cite{figalli2015isoperimetry} we the note that the ball is a minimizer for $s \in (0,1]$, $\alpha \in (0,N)$, $\beta>0$ and any $\varepsilon>0$ if the mass is sufficiently small.\\
However, still in \cite{frank2019proof}, it has been shown that the characteristic function of a ball is not a critical point for $\fG_\beta+V_\alpha$ when $\alpha \in (0,1)$ for the problem when relaxed on $L^1$ functions. Thus, from this observation, by \cite[Theorem $4.4$]{burchard2015nonlocal}, we can conclude that the ball cannot be a minimizer for $\fG_\beta+V_\alpha$ for any mass constraint $m$.\\
Hence, what we aim to show is that if we add a penalization to the functional $\fG_\beta+V_\alpha$ with the (possibly fractional) perimeter, the new penalized functional admits a minimum when the volume constraint $m$ is above a critical mass $m_0$, and that the ball is actually a minimum over a second critical mass $m_1$.\\
The first thing we have to show is the actual existence of the minimizer. To do this, let us consider the shape functional
\begin{equation*}
\fE_m(E)=\fG_\beta(E)+V_\alpha(E)+\varepsilon(m) P_s(E)
\end{equation*}
where $\varepsilon(m)=\left(\frac{m}{\omega_N}\right)^{1+\frac{\beta+s}{N}}$ and let us show that there exists a minimizer for it as $m>m_0$. The same exact proof will show that there exists a minimizer for $\fE(E)$ as $m>m_0$ if $\varepsilon>\varepsilon(m)$.

\begin{lem}\label{constr}
	Let $N \ge 2$, $\alpha \in (0,N)$, $\beta>0$, $s \in (0,1]$ and set $\varepsilon(m)=\left(\frac{m}{\omega_N}\right)^{1+\frac{\beta+s}{N}}$. Then there exist two positive constants $m_0$ and $R_0$ depending on $\alpha,\beta,s,N$ such that for any $m > m_0$ it holds
	\begin{equation}\label{eq:constrprob}
		\inf\{\fE_m(E): \ |E|=m\}=\inf\left\{\fE_m(E): \ |E|=m, \ E \subseteq B_{\left(\frac{m}{\omega_N}\right)^{\frac{1}{N}}R_0}\right\}.
	\end{equation}
	In particular problem \eqref{eq:constrprob} admits a  minimizer.
\end{lem}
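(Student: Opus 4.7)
The plan is to combine a scaling argument with a concentration-compactness truncation, then close with standard compactness. Setting $E=\lambda F$ with $\lambda=(m/\omega_N)^{1/N}$ so that $|F|=\omega_N$, a direct computation using $\fG_\beta(\lambda F)=\lambda^{2N+\beta}\fG_\beta(F)$, $V_\alpha(\lambda F)=\lambda^{N+\alpha}V_\alpha(F)$, $P_s(\lambda F)=\lambda^{N-s}P_s(F)$, together with $\varepsilon(m)=\lambda^{N+\beta+s}$, gives
\begin{equation*}
\fE_m(E) \;=\; \lambda^{2N+\beta}\bigl[\fG_\beta(F)+P_s(F)+\delta_m V_\alpha(F)\bigr], \qquad \delta_m:=\lambda^{\alpha-(N+\beta)}.
\end{equation*}
Since $\alpha<N<N+\beta$, one has $\delta_m\to 0$ as $m\to\infty$, so for $m$ larger than some $m_0=m_0(\alpha,\beta,s,N)$ the problem is essentially a minimization of $\fG_\beta(F)+P_s(F)$ on $\{|F|=\omega_N\}$ with a uniformly small Riesz perturbation.

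The core step is to prove that there exists $R_0=R_0(\alpha,\beta,s,N)$ such that every $F$ with $|F|=\omega_N$ and rescaled energy at most that of the unit ball is, up to translation, contained in $B_{R_0}$; rescaling back then yields the equality of infima in \eqref{eq:constrprob}. I would establish this via a cut-and-fill argument mirroring Lemma \ref{lembound}: if $F$ carries mass outside a large ball $B_R$, move this mass into a thin annulus just outside the unit sphere, where there is room because $|F\cap B_R|$ is close to $\omega_N$ by the energy ceiling. Using the decomposition $\fG_\beta(F)\ge 2\fG_\beta(F\cap B_R,F\setminus B_R)$ together with \eqref{eq:lowboundGbeta}, the attractive term drops by at least $cR^\beta\,|F\setminus B_R|$, while $V_\alpha$ and $P_s$ change by at most a bounded multiple of $|F\setminus B_R|$, so for $R$ large the modification strictly reduces the energy. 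Equivalently, via concentration-compactness on a minimizing sequence: vanishing is excluded by the fixed volume, and dichotomy into pieces at distance $d\to\infty$ would force $\fG_\beta(F)\ge 2 d^\beta|F^1||F^2|\to\infty$, exceeding the uniform upper bound; only tightness remains.

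The delicate point is that the truncation radius must be uniform in $m>m_0$, whereas the naive concentration-compactness would yield only sequence-dependent bounds. This is exactly what the rescaling step achieves: on the rescaled problem the energy ceiling $\fG_\beta(B)+P_s(B)+\delta_m V_\alpha(B)$ is bounded independently of $m$, and the Riesz correction per unit exterior mass of order $\delta_m\,d^{-(N-\alpha)}$ is dominated by the cut-and-fill gain $d^\beta$ as soon as $d$ and $m$ are large enough, with thresholds depending only on $\alpha,\beta,s,N$.

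Finally, existence on the constrained class $\{|E|=m,\,E\subseteq B_{r_0 R_0}\}$ with $r_0=(m/\omega_N)^{1/N}$ follows from standard lower-semicontinuity: any minimizing sequence has uniformly bounded (fractional) perimeter since the ball is an admissible competitor, hence up to subsequence it converges in $L^1$ to some $E$ (by the BV compactness theorem for $s=1$, or the compact embedding $W^{s,1}\hookrightarrow L^1$ on bounded domains for $s\in(0,1)$), with $|E|=m$ and $E\subseteq B_{r_0 R_0}$. Continuity of $\fG_\beta$ and $V_\alpha$ on uniformly bounded sets and $L^1$-lower semicontinuity of $P_s$ then show that $E$ attains the constrained infimum.
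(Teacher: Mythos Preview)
Your rescaling to unit volume and the final compactness step are correct and coincide with the paper's argument. The gap is in the core confinement step.

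You assert that under the cut-and-fill operation ``$V_\alpha$ and $P_s$ change by at most a bounded multiple of $|F\setminus B_R|$''. For $V_\alpha$ this is fine, but for $P_s$ it is not: cutting a set at a sphere does \emph{not} in general satisfy $P_s(F\cap B_R)\le P_s(F)+C\,|F\setminus B_R|$, and the filled piece in the annulus contributes its own perimeter. Lemma~\ref{lembound}, which you invoke as a model, involves only $\fG_\beta$ and carries no perimeter term, so the adaptation is not automatic. Your concentration--compactness alternative, as you yourself note, yields only sequence-dependent radii; rescaling normalizes the energy ceiling but does not by itself upgrade tightness to a uniform $R_0$.

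The paper closes this gap with two ingredients you do not invoke. First, the energy ceiling together with $P_s(F)\ge P_s(B)$ gives $\fD_\beta(F)\le\delta_m\,V_\alpha(B)$, and then Theorem~\ref{thm:quantrandisop} (the quantitative $\fG_\beta$ inequality proved earlier in the paper) yields, after a translation, $|F\setminus B|\le\eta$ with an explicit $\eta\sim m^{-(\beta+N-\alpha)/(2N)}$. Second, with this smallness in hand, the perimeter Truncation Lemma of \cite{figalli2015isoperimetry} produces a radius $r_*\in[1,1+C\eta^{1/N}]$ at which
\[
P_s(F\cap B_{r_*})\le P_s(F)-\frac{|F\setminus B_{r_*}|}{C\,\eta^{s/N}},
\]
so the perimeter strictly \emph{drops}, and by an amount that grows as $m\to\infty$. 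The paper then dilates $F\cap B_{r_*}$ back to volume $\omega_N$ (rather than filling an annulus), and this perimeter gain absorbs the dilation errors in all three terms of $\fE_m$. Your outline would go through if you replaced the informal cut-and-fill by this truncate-and-dilate construction and cited the truncation lemma explicitly; without it, the perimeter bookkeeping does not close.
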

\begin{proof}
Let us set
\begin{equation*}
	\gamma=\inf\{\fE_m(E): \ |E|=m\}
\end{equation*}
and $m_0>\max\{\omega_N,1\}$. Consider $m>m_0$ and let us recall that $\beta+N>\alpha$. Moreover, let us observe that
\begin{equation*}
	V_\alpha(B[m])=\left(\frac{m}{\omega_N}\right)^{1+\frac{\alpha}{N}}V_\alpha(B), \qquad P_s(B[m])=\left(\frac{m}{\omega_N}\right)^{1-\frac{s}{N}}P_s(B).
\end{equation*} 
We have
\begin{align}\label{gammaest}
	\begin{split}
	\gamma\le \fE_m(B[m])&=\left(\frac{m}{\omega_N}\right)^{2+\frac{\beta}{N}}\fG_\beta(B)+\left(\frac{m}{\omega_N}\right)^{1+\frac{\alpha}{N}}V_\alpha(B)+ \left(\frac{m}{\omega_N}\right)^{2+\frac{\beta}{N}}P_s(B)\\
	&\le \left(\frac{m}{\omega_N}\right)^{2+\frac{\beta}{N}}\fE_{\omega_N}(B).
\end{split}
\end{align}
Fix any Borel set $E$ such that $|E|=m$ and $\fE_m(E)\le \gamma+V_\alpha(B[m])$. Thus we have, by using the isoperimetric inequalities involving $V_\alpha$ and $P_s$,
\begin{align}\label{defest}
	\begin{split}
		\frac{\fD_\beta(E)}{\fG_\beta(B[m])}&\le \frac{2\left(\frac{m}{\omega_N}\right)^{1+\frac{\alpha}{N}}V_\alpha(B)-V_\alpha(E)+\varepsilon(m)(P_s(B[m])- P_s(E))}{\left(\frac{m}{\omega_N}\right)^{2+\frac{\beta}{N}}\fG_\beta(B)}\\
		&\le \frac{2 V_\alpha(B)}{\fG_\beta(B)}\left(\frac{\omega_N}{m}\right)^{\frac{\beta+N-\alpha}{N}}.
	\end{split}
\end{align}
Now let us consider $\lambda=\left(\frac{\omega_N}{m}\right)^{\frac{1}{N}}$ in such a way that $E_*=\lambda E$ satisfies $|E_*|=\omega_N$. Moreover, we can translate $E_*$ so that $\delta(E_*)=|E_* \Delta B|$. We have, by Equations \eqref{quantisop}, \eqref{defest} and the fact that $\frac{\fD_\beta(E_*)}{\fG_\beta(B)}=\frac{\fD_\beta(E)}{\fG_\beta(B[m])}$,
\begin{equation*}
	|E_* \setminus B|\le |E_* \Delta B|\le C_1\left(2 V_\alpha(B)\left(\frac{\omega_N}{m}\right)^{\frac{\beta+N-\alpha}{N}}\right)^{\frac{1}{2}},
\end{equation*}
for some constant $C_1$ depending only on $\beta$ and $N$. We can set 
\begin{equation}\label{eq:etadef}
	\eta=C_1\left(2 V_\alpha(B)\left(\frac{\omega_N}{m}\right)^{\frac{\beta+N-\alpha}{N}}\right)^{\frac{1}{2}}=:C_2 m^{-\frac{\beta+N-\alpha}{2N}},
\end{equation}
where $C_2$ depends only on $\beta,\alpha,N$ and consider $m_0>\left(\frac{2C_2}{\omega_N}\right)^{\frac{2N}{\beta+N-\alpha}}$ in such a way that for any $m>m_0$ it holds $\eta<\frac{\omega_N}{2}$. By the Truncation Lemma for the fractional perimeter \cite[Lemma $4.5$]{figalli2015isoperimetry} if $s \in (0,1)$ or for the classical perimeter \cite[Lemma $5.1$]{figalli2015isoperimetry} if $s=1$, we know there exist two constants $C_3$ and $C_4$ depending on $N$ and $s$ and a radius $r_* \in [1,1+C_3\eta^{\frac{1}{N}}]$ such that
\begin{equation*}
	P_s(E_* \cap B_{r_*})\le P_s(E_*)-\frac{|E_*\setminus B_{r_*}|}{C_4N\omega_N \eta^{\frac{s}{N}}}.
\end{equation*}
If we define $r_E=\frac{r_*}{\lambda}$ we have
\begin{equation*}
	\lambda^{N-s}P_s(E \cap B_{r_E})\le \lambda^{N-s}P_s(E)-\frac{\lambda^N|E\setminus B_{r_E}|}{C_4N\omega_N \eta^{\frac{s}{N}}},
\end{equation*}
that is to say
\begin{equation}\label{eq:estper1}
	P_s(E \cap B_{r_E})\le P_s(E)-\frac{\lambda^s|E\setminus B_{r_E}|}{C_4N\omega_N \eta^{\frac{s}{N}}}.
\end{equation}
Now let us set $u=\frac{|E\setminus B_{r_E}|}{m}$, $\mu=(1-u)^{-\frac{1}{N}}$ and $F=\mu(E \cap B_{r_E})$. It holds, by definition, $|F|=m$. Let us observe, in particular, that
\begin{equation*}
	u=\frac{|E\setminus B_{r_E}|}{m}=\frac{|E_*\setminus B_{r_*}|}{\omega_N}\le \frac{\eta}{\omega_N}\le \frac{1}{2}.
\end{equation*}
Let us consider any exponent $k>0$. Then, by Lagrange's theorem and the fact that $u \le \frac{1}{2}$ we obtain
\begin{equation}\label{eq:estmu}
	\mu^k\le 1+\frac{k}{N}2^{\frac{k}{N}+1}u.
\end{equation}
By using \eqref{eq:estmu} with $k=N-s$ and \eqref{eq:estper1} we have
\begin{equation}\label{eq:estper2}
	P_s(F)\le \left(1+\frac{N-s}{N}2^{\frac{N-s}{N}+1}u\right)P_s(E)-\left(1+\frac{N-s}{N}2^{\frac{N-s}{N}+1}u\right)\frac{\lambda^s|E\setminus B_{r_E}|}{C_4N\omega_N \eta^{\frac{s}{N}}}.
\end{equation}
Concerning $\fG_\beta(F)$, we have, by using \eqref{eq:estmu} with $k=2N+\beta$,
\begin{equation}\label{eq:estg}
	\fG_\beta(F)\le \left(1+\frac{2N+\beta}{N}2^{\frac{2N+\beta}{N}+1}u\right)\fG_\beta(E \cap B_{r_E})\le \left(1+\frac{2N+\beta}{N}2^{\frac{2N+\beta}{N}+1}u\right)\fG_\beta(E).
\end{equation}
Finally, for $V_\alpha(F)$, we have, still by \eqref{eq:estmu} with $k=\alpha+N$,
\begin{equation}\label{eq:estv}
	V_\alpha(F)\le \left(1+\frac{N+\alpha}{N}2^{\frac{N+\alpha}{N}+1}u\right)V_\beta(E \cap B_{r_E})\le \left(1+\frac{N+\alpha}{N}2^{\frac{N+\alpha}{N}+1}u\right)V_\beta(E).
\end{equation}
Combining inequalities \eqref{eq:estper2}, \eqref{eq:estg} and \eqref{eq:estv} we get
\begin{equation*}
	\fE_m(F)\le \fE_m(E)+C_5u\fE_m(E)-\varepsilon(m)\left(1+\frac{N-s}{N}2^{\frac{N-1}{N}+1}u\right)\frac{\lambda^s|E\setminus B_{r_E}|}{C_4N\omega_N \eta^{\frac{s}{N}}},
\end{equation*}
where $C_5$ is a constant depending on $N,\alpha,\beta,s$. Being $m>\omega_N$, inequality \eqref{gammaest} together with the fact that $\fE_m(E)\le \gamma+V_\alpha(B[m])$ imply
\begin{equation*}
	\fE_m(E)\le 2\left(\frac{m}{\omega_N}\right)^{2+\frac{\beta}{N}}\fE_{\omega_N}(B).
\end{equation*}
Hence, recalling that $u=\frac{|E \setminus B_{r_E}|}{m}$, $\lambda=\omega_N^{\frac{1}{N}}m^{-\frac{1}{N}}$, $\varepsilon(m)=\left(\frac{m}{\omega_N}\right)^{1+\frac{\beta+s}{N}}$ and $\eta$ is defined in \eqref{eq:etadef} we get
\begin{align*}
	\fE_m(F)&\le \fE_m(E)+2C_5|E \setminus B_{r_E}|m^{1+\frac{\beta}{N}}\omega_N^{-2-\frac{\beta}{N}}\fE_{\omega_N}(B)-C_6m^{1+\frac{\beta}{N}+\frac{s(\beta+N-\alpha)}{2N^2}}|E\setminus B_{r_E}|\\
	&=\fE_m(E)+|E \setminus B_{r_E}|m^{1+\frac{\beta}{N}}\omega_N^{-2-\frac{\beta}{N}}\left(2C_5\fE_{\omega_N}(B)-C_6\omega^{2+\frac{\beta}{N}}m^{\frac{s(\beta+N-\alpha)}{2N^2}}\right),
\end{align*}
where $C_6$ is a suitable constant depending on $\alpha,\beta,s,N$. Choosing
\begin{equation*}
	m_0>\left(\frac{2C_5\fE_{\omega_N}(B)}{C_6\omega_N^{2+\frac{\beta}{N}}}\right)^{\frac{2N^2}{s(\beta+N-\alpha)}}
\end{equation*}
and $m>m_0$, we conclude that
\begin{equation*}
	\fE_m(F)\le \fE_m(E).
\end{equation*}
In particular it holds $F \subseteq B_{\mu r_E}$ with 
\begin{align*}
r_E=\frac{r_*}{\lambda}\le (1+C_3\eta^{\frac{1}{N}})\left(\frac{m}{\omega_N}\right)^{\frac{1}{N}}\le (1+C_3C_2^{\frac{1}{N}}m_0^{-\frac{\beta+N-\alpha}{2N^2}})\left(\frac{m}{\omega_N}\right)^{\frac{1}{N}}. 
\end{align*}
Moreover, we have, by \eqref{eq:estmu} with $k=1$,
\begin{equation*}
\mu r_E\le \left(1+\frac{1}{N}2^{\frac{1}{N}}\right)(1+C_3C_2^{\frac{1}{N}}m_0^{-\frac{\beta+N-\alpha}{2N^2}})\left(\frac{m}{\omega_N}\right)^{\frac{1}{N}}=:R_0\left(\frac{m}{\omega_N}\right)^{\frac{1}{N}}
\end{equation*}
where 
\begin{equation}\label{eq:R0}
	R_0=\left(1+\frac{1}{N}2^{\frac{1}{N}}\right)(1+C_3C_2^{\frac{1}{N}}m_0^{-\frac{\beta+N-\alpha}{2N^2}})
\end{equation} 
depends only on $N,\alpha,\beta,s$. Being also $|F|=m$, we have that
\begin{equation*}
	\gamma=\inf\left\{\fE_m: \ |E|=m, E \subseteq B_{\left(\frac{m}{\omega_N}\right)^{\frac{1}{N}}R_0} \right\}.
\end{equation*}
Now let us show that there exists a minimizer. To do this, let us consider a minimizing sequence $E_h \subseteq B_{\left(\frac{m}{\omega_N}\right)^{\frac{1}{N}}r_0}$ with
\begin{equation*}
	\fE_m(E_h)\le \gamma+1.
\end{equation*}
This implies in particular
\begin{equation*}
	P_s(E_h)\le \frac{\gamma+1}{\varepsilon(m)}.
\end{equation*}
By precompactness in $L^1$ of uniformly bounded sequences with respect to the classical perimeter (see \cite{evans2015measure}) for $s=1$ and with respect to the fractional perimeter (see \cite{cozzi2017regularity}) and lower semicontinuity of the involved shape functionals we conclude the proof.
\end{proof}
Now that we have prove that $\fE_m$ admits a minimizer for $m>m_0$, let us show that such minimizer is actually a ball when $m>m_1$ (where $m_1$ is a certain critical mass). To do this, we first need to show that, up to suitable rescaling, minimizers of $\fE_m$ are quasi-minimizers of the (possibly fractional) perimeter.
\begin{lem}\label{lem:quasimin}
	Let $m_0$ be as in Lemma \ref{constr}, $m>m_0$ and $\varepsilon(m)=\left(\frac{m}{\omega_N}\right)^{1+\frac{\beta+s}{N}}$. Consider $E$ a minimizer of $\fE_m$ with $|E|=m$ and such that $E \subseteq B_{\left(\frac{m}{\omega_N}\right)^{\frac{1}{N}}R_0}$, where $R_0$ is defined in Formula \eqref{eq:R0}. Then the set $E_*=\lambda E$ such that $|E_*|=\omega_N$ is a $\Lambda$-quasi minimizer of $P_s$ for some constant $\Lambda$ depending on $\beta,\alpha,s$, i.e. for any measurable set $F$ with $E_* \Delta F \subset \subset B_R(x)$ for some $x \in \R^N$ and $R \in (0,1)$ it holds
	\begin{equation}\label{quasimin}
	P_s(E_*)\le P_s(F)+\Lambda |E_* \Delta F|.
	\end{equation}
\end{lem}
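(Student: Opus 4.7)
The plan is to convert a local competitor $F$ for $E_*$ into a volume-preserving competitor for $E$ and then apply the minimality of $E$. Given $F$ with $E_*\Delta F\subset\subset B_R(x)$ and $R\in(0,1)$, set $F':=\lambda^{-1}F$ and $\mu:=(\omega_N/|F|)^{1/N}$; then $\widetilde F:=\mu F'$ satisfies $|\widetilde F|=m$, and since $\bigl||F|-\omega_N\bigr|\le|E_*\Delta F|\le\omega_N$ one has $|\mu-1|\le C|E_*\Delta F|$. Applying $\fE_m(E)\le\fE_m(\widetilde F)$, expanding with the scalings $\fG_\beta(\mu G)=\mu^{2N+\beta}\fG_\beta(G)$, $V_\alpha(\mu G)=\mu^{N+\alpha}V_\alpha(G)$, $P_s(\mu G)=\mu^{N-s}P_s(G)$, and using $\varepsilon(m)=\lambda^{-(N+\beta+s)}$, one divides by $\lambda^{-(2N+\beta)}$ to rephrase minimality entirely in the unit-volume rescaling:
\begin{equation*}
\fG_\beta(E_*)+P_s(E_*)+\lambda^{N+\beta-\alpha}V_\alpha(E_*) \le \mu^{2N+\beta}\fG_\beta(F)+\mu^{N-s}P_s(F)+\lambda^{N+\beta-\alpha}\mu^{N+\alpha}V_\alpha(F).
\end{equation*}
Rearranging isolates $P_s(E_*)-P_s(F)$, and the task reduces to bounding three error terms by $C|E_*\Delta F|$.

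Next, the bound $\fE_m(E)\le\fE_m(B[m])$ combined with the same scaling shows that $P_s(E_*)$, $\fG_\beta(E_*)$ and $\lambda^{N+\beta-\alpha}V_\alpha(E_*)$ are each bounded by a constant depending only on $N,\alpha,\beta,s$ (the exponent $N+\beta-\alpha$ is positive since $\alpha<N$). One may assume $P_s(F)\le P_s(E_*)$, for otherwise the target inequality is trivial; then $(\mu^{N-s}-1)P_s(F)\le C|E_*\Delta F|$. The Riesz error is controlled via the uniform estimate $\int_{E_*\cup F}|x-y|^{-(N-\alpha)}\,dx\le C$, obtained by rearranging the kernel onto a ball of volume $|E_*\cup F|\le 2\omega_N$; together with the bounded $\lambda^{N+\beta-\alpha}$ prefactor this yields $\lambda^{N+\beta-\alpha}(\mu^{N+\alpha}V_\alpha(F)-V_\alpha(E_*))\le C|E_*\Delta F|$.

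The main obstacle is the $\fG_\beta$-error, since $|x-y|^\beta$ grows at infinity and $B_R(x)$ is a priori allowed to sit arbitrarily far from the origin. The key observation is that the working assumption $P_s(F)\le P_s(E_*)$ rules this out. Indeed, if $E_*\cap B_R(x)=\emptyset$, then $E_*\subseteq F$ and $F=E_*\cup G$ with $G\subseteq B_R(x)$ disjoint from $E_*$, so a direct expansion gives
\begin{equation*}
P_s(F)=P_s(E_*)+P_s(G)-\frac{2(1-s)}{\omega_{N-1}}\int_{E_*}\int_G\frac{dxdy}{|x-y|^{N+s}},
\end{equation*}
whence $P_s(G)\le\frac{2(1-s)}{\omega_{N-1}}|E_*||G|\,\mathrm{dist}(E_*,G)^{-(N+s)}$; comparing with the fractional isoperimetric lower bound $P_s(G)\ge c_{N,s}|G|^{(N-s)/N}$ forces $\mathrm{dist}(E_*,G)\le d_{\max}(N,s)$. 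The complementary case $E_*\cap B_R(x)\ne\emptyset$ is immediate because $E_*\subseteq B_{R_0}$, so in all cases $E_*\cup F\subseteq B_{\widetilde R_0}$ for some $\widetilde R_0=\widetilde R_0(N,\alpha,\beta,s)$, and consequently $|\fG_\beta(F)-\fG_\beta(E_*)|\le(2\widetilde R_0)^\beta\cdot 4\omega_N|E_*\Delta F|$, while $(\mu^{2N+\beta}-1)\fG_\beta(F)\le C|E_*\Delta F|$. Collecting the three error estimates proves \eqref{quasimin} with $\Lambda=\Lambda(N,\alpha,\beta,s)$.
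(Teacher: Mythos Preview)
Your approach is essentially the paper's: rescale a local competitor $F$ to volume $m$, invoke minimality of $E$, and bound the three error terms using that $E_*\cup F$ sits in a fixed ball. Your treatment of the ``far ball'' situation (deducing $\mathrm{dist}(E_*,G)\le d_{\max}$ from the perimeter expansion together with the fractional isoperimetric inequality, and then feeding this containment into the $\fG_\beta$ estimate) is a mild variant of what the paper does---there the far case is disposed of by reading the quasi-minimality inequality \emph{directly} off the expansion $P_s(F)=P_s(E_*)+P_s(G)-\frac{2(1-s)}{\omega_{N-1}}\int_{E_*}\int_G|x-y|^{-N-s}$, without ever touching $\fG_\beta$ or $V_\alpha$. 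Both routes work.

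There is, however, one genuine imprecision. The inequality $|\mu-1|\le C|E_*\Delta F|$ does \emph{not} follow from $\bigl||F|-\omega_N\bigr|\le|E_*\Delta F|\le\omega_N$: since $R$ may be arbitrarily close to $1$, the volume $|F|$ may be arbitrarily close to $0$, in which case $\mu=(\omega_N/|F|)^{1/N}\to\infty$ while $|E_*\Delta F|$ stays below $\omega_N$. The same defect contaminates all the later bounds of the form $(\mu^{k}-1)\cdot(\text{bounded})\le C|E_*\Delta F|$. The paper avoids this by first treating the case $|F|\le\omega_N/2$ separately and trivially: there $|E_*\Delta F|\ge\omega_N/2$, and since you have already shown $P_s(E_*)\le P_s(B)+V_\alpha(B)$, one gets $P_s(E_*)-P_s(F)\le P_s(E_*)\le\frac{2(P_s(B)+V_\alpha(B))}{\omega_N}\,|E_*\Delta F|$. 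Only once $|F|>\omega_N/2$ is secured do the estimates $|\mu^k-1|\le C_k|E_*\Delta F|$ hold with constants depending only on $N$ and $k$, and from that point on your argument is correct as written.
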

\begin{proof}
	By $|E_*|=\omega_N$ we know that $\lambda=\left(\frac{\omega_N}{m}\right)^\frac{1}{N}\le 1$, since $m>m_0>\omega_N$, see the proof of Lemma \ref{constr}, and $\varepsilon(m)=\lambda^{-N-\beta-s}$. Being $E$ a minimizer of $\fE_m$ under the volume constraint $|E|=m$, we get
	\begin{equation*}
	\fE_m(E)\le \fE_m(B[m])= \lambda^{-2N-\beta}\fG_\beta(B)+\lambda^{-N-\alpha}V_\alpha(B)+\lambda^{-N+s}\varepsilon(m) P_s(B)
	\end{equation*}
	that implies
	\begin{align*}
	\varepsilon(m) P_s(E)&\le \lambda^{-2N-\beta}(\fG_\beta(B)-\fG_\beta(E_*))+\lambda^{-N-\alpha}(V_\alpha(B)-V_\alpha(E_*))+\lambda^{-N+s}\varepsilon(m) P_s(B)\\
	&\le \lambda^{-N-\alpha}V_\alpha(B)+\lambda^{-N+s}\varepsilon(m) P_s(B),
	\end{align*}
where we also used the isoperimetric inequality on $\fG_\beta$. Multiplying last relation by $\lambda^{2N+\beta}$ we have
	\begin{align*}
	P_s(E_*)\le \lambda^{N+\beta-\alpha}V_\alpha(B)+P_s(B)\le V_\alpha(B)+P_s(B).
	\end{align*}
	Now let us consider any measurable set $F \subseteq \R^N$ with $E_* \Delta F \subset \subset B_R(x)$ for some $R<1$. If $P_s(F)\ge P_s(E_*)$ equation \eqref{quasimin} is already verified. Thus, let us assume $P_s(F)<P_s(E_*)$. Then we have, from the fractional (or classical if $s=1$) isoperimetric inequality
	\begin{equation*}
	\frac{|F|}{|B|}\le \left(\frac{P_s(F)}{P_s(B)}\right)^{\frac{N}{N-s}}\le \left(\frac{P_s(E_*)}{P_s(B)}\right)^{\frac{N}{N-s}}\le  \left(1+\frac{V_\alpha(B)}{P_s(B)}\right)^{\frac{N}{N-s}}:=C_1.
	\end{equation*}
	Let us first consider the case $|F| \le \frac{\omega_N}{2}$. Then $|F \Delta E_*| \ge \frac{\omega_N}{2}$ and we can find $\Lambda$ big enough and independent of $m$ in such a way to obtain 
	\begin{equation}\label{eq:Lamba}
	\Lambda \frac{\omega_N}{2}\ge \frac{\lambda^{-s-\alpha}V_\alpha(B)}{\varepsilon(m)}+P_s(B).
	\end{equation}
	Indeed we have
	\begin{equation*}
	\frac{\lambda^{-s-\alpha}V_\alpha(B)}{\varepsilon(m)}+P_s(B)=\lambda^{N+\beta-\alpha}V_\alpha(B)+P_s(B)\le V_\alpha(B)+P_s(B),
	\end{equation*}
being $\lambda \le 1$, hence
	\begin{equation*}
	\Lambda \ge \frac{2(V_\alpha(B)+P_s(B))}{\omega_N}
	\end{equation*}
satisfies inequality \eqref{eq:Lamba}. In this case Equation \eqref{quasimin} follows.\\
	Assume now $|F|>\frac{\omega_N}{2}$ and observe recall that $E_* \Delta F\subset \subset B_1(x)$ for some $x \in \R^N$. Since $E_* \subseteq B_{R_0}$, if $B_1(x)\cap B_{R_0+1}=\emptyset$, then $E_* \Delta F$ is separated from $E_*$. If $s=1$, this implies $P(F)\ge P(E_*)$, which is a contradiction with the assumption $P(F)<P(E_*)$. Moreover, if $s<1$, we have, by definition of fractional perimeter,
	\begin{equation*}
		P_s(F)=P_s(E_*)+P_s(F \Delta E_*)-\frac{2(1-s)}{\omega_{N-1}}\int_{F \Delta E_*}\int_{E_*}|x-y|^{-s-N}dxdy.
	\end{equation*}
	Concerning the last integral, since $E_* \subseteq B_{R_0}$, $F \Delta E_* \subset \subset B_1(x)$ for some $x \in \R^N$ and $B_1(x)\cap B_{R_0+1}=\emptyset$, it holds $|z-y| \ge 1$ for any $z \in E_*$ and $y \in F \Delta E_*$. Hence, we get
	\begin{equation*}
		\int_{F \Delta E_*}\int_{E_*}|x-y|^{-s-N}dxdy\le \omega_N|F \Delta E_*|
	\end{equation*}
	and then
	\begin{equation*}
		P_s(E_*) \le P_s(F_*)+\frac{2(1-s)\omega_N}{\omega_{N-1}} |F \Delta E_*|,
	\end{equation*}
	thus Equation \eqref{quasimin} holds for $\Lambda \ge \frac{2(1-s)\omega_N}{\omega_{N-1}}$.\\
	On the other hand, if $B_1(x)\cap B_{R_0+1}\not = \emptyset$, then $F \subseteq B_{R_0+3}$. Now let us define $\mu=\left(\frac{m}{|F|}\right)^{\frac{1}{N}}$, in such a way that $|\mu F|=m$. Being $E$ a minimizer of $\fE_m$, we have
	\begin{align*}
	\varepsilon(m) P_s(E)&\le \varepsilon(m) \mu^{N-s}P_s(F)+\mu^{N+\alpha}V_\alpha(F)-V_\alpha(E)+\mu^{2N+\beta}\fG_\beta(F)-\fG_\beta(E)\\
	&= \varepsilon(m)\mu^{N-s}P_s(F)+\mu^{N+\alpha}(V_\alpha(F)-V_\alpha(E_*))+((\lambda \mu)^{N+\alpha}-1)V_\alpha(E)\\
	&+\mu^{2N+\beta}(\fG_\beta(F)-\fG_\beta(E_*))+((\lambda\mu)^{2N+\beta}-1)\fG_\beta(E).
	\end{align*}
	Multiplying last inequality by $\lambda^{2N+\beta}$ and using the fact that $\lambda \le 1$, we get
	\begin{align}\label{estfin5}
	\begin{split}
	P_s(E_*)&\le P_s(F)\\&+ ((\lambda\mu)^{N-s}-1)P_s(F)\\&+(\lambda\mu)^{N+\alpha}(V_\alpha(F)-V_\alpha(E_*))\\&+((\lambda \mu)^{N+\alpha}-1)V_\alpha(E_*)\\&+(\lambda\mu)^{2N+\beta}(\fG_\beta(F)-\fG_\beta(E_*))\\
	&+((\lambda\mu)^{2N+\beta}-1)\fG_\beta(E_*).
	\end{split}
	\end{align}
	First of all, observe that $|F \cap E_*|\le |E_*|=\omega_N$ and $|F \setminus E_*|\le |B_1(x)|=\omega_N$, hence $|F|\le 2\omega_N$ and then $-\omega_N \le \omega_N-|F|\le |E_* \Delta F|$. Thus, in particular, $-\frac{1}{2}\le \frac{\omega_N-|F|}{|F|}\le 1$. Moreover $|F|\ge \frac{\omega_N}{2}$, thus we get, for any $\nu\ge N$,
	\begin{align*}
	\begin{split}
	(\lambda \mu)^{\nu}&=\left(1+\frac{\omega_N-|F|}{|F|}\right)^{\frac{\nu}{N}}\le 1+(2^{\frac{\nu}{N}}-1)\left|\frac{\omega_N-|F|}{|F|}\right|\\&\le 1+2(2^{\frac{\nu}{N}}-1)\frac{1}{\omega_N}|E_* \Delta F|.
	\end{split}
	\end{align*}
	This leads to the following estimates
	\begin{align*}
	(\lambda \mu)^{N+\alpha}-1&\le 2(2^{1+\frac{\alpha}{N}}-1)\frac{1}{\omega_N}|E_* \Delta F|,\\
	(\lambda \mu)^{2N+\beta}-1&\le 2(2^{2+\frac{\beta}{N}}-1)\frac{1}{\omega_N}|E_* \Delta F|,
	\end{align*}
	that are respectively used to obtain an upper bound on the fourth and sixth summand of \eqref{estfin5}, together with the fact that $V_\alpha(E_*)\le V_\alpha(B)$ and $\fG_\beta(E_*)\le 2^\beta R_0^{\beta+2N}\omega_N^2$.\\
	On the other hand, we can use Bernoulli's inequality to achieve
	\begin{equation*}
		(\lambda \mu)^{N-s}-1\le2\frac{N-s}{N}\frac{|E_* \Delta F|}{\omega_N},
	\end{equation*}
	that is used to obtain an upper bound on the second summand of \eqref{estfin5}.\\
	Concerning the third summand of \eqref{estfin5}, we use \cite[Equation $2.11$]{muratov2014isoperimetric} (see \cite[Equation $5.9$]{figalli2015isoperimetry} for a more precise statement involving the perimeter), obtaining
	\begin{equation}\label{estfin3}
	V_\alpha(F)-V_\alpha(E_*)\le C_1|E_* \Delta F|,
	\end{equation}
where $C_1$ is a constant depending on $N$ and $\alpha$. Concerning the fifth summand, recall that $F \subseteq B_{R_0+3}$, where $\widetilde{R}=R_0+3$ and $E_* \subseteq B_{R_0}$, so that it holds $F \cup E_* \subseteq B_{R_0+3}$. Thus we have
	\begin{align}\label{estpass2}
		\begin{split}
			\fG_\beta(F)-\fG_\beta(E_*)&=\fG_\beta(F,F \setminus E_*)+\fG_\beta(F,F \cap E_*)-\fG_\beta(E_*)\\
			&\le \fG_\beta(F,F \setminus E_*)+\fG_\beta(F,E_*)-\fG_\beta(E_*)\\
			&=\fG_\beta(F,F \setminus E_*)+\fG_\beta(F\setminus E_*,E_*)+\fG_\beta(F\cap E_*,E_*)-\fG_\beta(E_*)\\
			&\le \fG_\beta(F,F \setminus E_*)+\fG_\beta(F\setminus E_*,E_*)\\ 
			&= \int_{F}\int_{F\setminus E_*}|x-y|^\beta dxdy+\int_{E_*}\int_{F\setminus E_*}|x-y|^\beta dxdy\\
			&\le |F||F\setminus E_*|(2(R_0+3))^\beta+\omega_N|F\setminus E_*|(2(R_0+3))^\beta\\
			&\le 2^\beta(R_0+3)^\beta((R_0+3)^{N}+1)\omega_N|F \setminus E_*|\\
			&\le C_2|E_*\Delta F|,
		\end{split}
	\end{align}
where $C_2:=2^\beta(R_0+3)^\beta((R_0+3)^{N}+1)\omega_N>0$ is a constant depending on $N,\beta,\alpha,s$. Hence, we conclude, by using all the upper bounds we obtained, that there exists a constant $C_3>0$ depending on $N,\beta,\alpha,s$ such that
\begin{equation*}
	P_s(E_*)\le P_s(F)+C_3|E_*\Delta F|,
\end{equation*}
thus equation \eqref{quasimin} is verified by taking $\Lambda \ge C_3$. Finally, if we take $$\Lambda \ge \max\left\{\frac{2(V_\alpha(B)+P_s(B))}{\omega_N},\frac{2(1-s)\omega_N}{\omega_{N-1}},C_3\right\}$$ we conclude the proof.
\end{proof}
Now that we have shown that minimizers of $\fE_m$ are quasi-minimizers of the perimeter (up to a rescaling), we can use this property to improve the regularity of the minimizers of $\fE_m$ and finally prove Theorem \ref{thm:minmix1}. 
\begin{proof}[Proof of Theorem \ref{thm:minmix1}]
	In this proof, given $E$ of finite measure,
	\begin{equation*} \widetilde{\delta}(E)=\min\left\{|E \Delta B_r(x)|; \ x \in \R^N, \ |B_r|=|E|\right\}.
	\end{equation*}
	First of all, let us observe that if the ball $B[m]$ is the minimizer of $\fE_m$ then, by the isoperimetric inequality, it is obviously the minimizer of $\fE$ for any $\varepsilon>\varepsilon(m)$. Thus, let us work directly with $\fE_m$.\\
	Let us argue by contradiction. Let us suppose there exists a sequence $m_h>m_0$ such that $m_h \to +\infty$ and $E_h$ are minimizers of $\fE_{m_h}$ for which $\widetilde{\delta}(E_h)>0$ for all $h \in \N$. By Lemma \ref{constr} we can assume $E_h \subseteq B_{\left(\frac{m_h}{\omega_N}\right)^\frac{1}{N}R_0}$ for any $h \in N$. Being $E_h$ minimizers of the $\fE_{m_h}$, we have
	\begin{align*}
	\frac{P_s(E_h)-P_s(B[m_h])}{ P_s(B[m_h])}&\le \frac{2\left(\frac{m_h}{\omega_N}\right)^{1+\frac{\alpha}{N}}V_\alpha(B)-V_\alpha(E_h)+(G_\beta(B[m_h])-G_\beta(E_h))}{\varepsilon(m_h) \left(\frac{m_h}{\omega_N}\right)^{1-\frac{s}{N}} P_s(B)}
	\\&\le \frac{2V_\alpha(B)}{ P_s(B)}\left(\frac{\omega_N}{m_h}\right)^{\frac{\beta+N-\alpha}{N}}.
	\end{align*}
	By the sharp quantitative isoperimetric inequality for the fractional perimeter (see \cite[Theorem $1.1$]{figalli2015isoperimetry}) when $s \in (0,1)$, or for the classical perimeter (see \cite{fusco2015quantitative} for a survey) when $s=1$, we also have
	\begin{equation*}
	\frac{P_s(E_h)-P_s(B[m_h])}{P_s(B[m_h])}\ge C \left(\frac{\widetilde{\delta}(E_h)}{|E_h|}\right)^2
	\end{equation*}
	where the constant $C$ depends only on $s$ and $N$. Thus, sending $h \to +\infty$ and observing that $\beta+N-\alpha>0$, we have $\frac{\widetilde{\delta}(E_h)}{|E_h|}\to 0$. Being all the quantities involved translation-invariant and scaling-invariant, we may assume $\frac{\widetilde{\delta}(E_h)}{|E_h|}=\frac{\delta(E_{h,*})}{\omega_N}=\frac{|E_{h,*} \Delta B|}{\omega_N}$, where $E_{h,*}$ is given by $\lambda_h E_h$ in such a way that $|E_{h,*}|=\omega_N$.\\
	Since $E_{h,*} \subset B_{R_0}$ for all $h$ and $P_s(E_{h,*})$ are equibounded, by precompactness of the perimeter, up to a not relabelled subsequence, we may assume that $E_{h,*} \to E$ with $|E|=\omega_N$. Moreover, since $\delta(E_{h,*}) \to 0$ we have that $E=B_1$. Then, since $E_{h,*}$ are $\Lambda$-quasi minimizers of $P_s$ by Lemma \ref{lem:quasimin}, by a well known regularity result (see \cite[Corollary $3.6$]{figalli2015isoperimetry} for $s \in (0,1)$ and \cite[Proposition $2.1$ and $2.2$]{cicalese2012selection} for $s=1$) we have that for $h$ large $E_{h,*}$ is an open set of class $C^1$ and that $E_{h,*}\to B_1$ in $C^1$. This means in particular that, for $h$ large, $E_{h,*}$ are nearly-spherical sets $E_{h,*}:=E_{1,u_{h}}$ as in \eqref{eq:nearlyspherical} with  $u_h \in C^1(S^{N-1})$ such that $\lim_{h \to \infty}\Norm{u_h}{C^1(S^{N-1})}=0 $.\\ 
	Now let us denote
	\begin{equation*}
	[[u]]^2_s:=\frac{(1-s)}{\omega_{N-1}}\int_{\partial B}\int_{\partial B}\frac{|u(x)-u(y)|^2}{|x-y|^{N+2s-1}}d\cH^{N-1}(x)d\cH^{N-1}(y)
	\end{equation*}
	and set, for any function $u \in W^{1,\infty}(S^{N-1})$, $[[u]]^2_{1}:=\Norm{\nabla_\tau u}{L^2(S^{N-1})}^2$. By \cite[Theorem $2.1$]{figalli2015isoperimetry} for $s \in (0,1)$ and \cite[Theorem $1.2$]{fuglede1989stability} for $s=1$, we know that
	\begin{equation}\label{def2}
	\frac{P_s(E_{h,*})-P_s(B)}{P_s(B)}\ge C([[u_h]]_{\frac{1+s}{2}}^2+\Norm{u_h}{L^2(S^{N-1})}^2)
	\end{equation}
	for some positive constant $C>0$.\\
	On the other hand, we have, by minimality of $E_h$ and the fact that $\fG_\beta(B[m_h])-\fG_\beta(E_h)\le 0$
	\begin{equation*}
	\varepsilon(m_h)(P_s(E_h)-P_s(B[m_h]))\le V_\alpha(B[m_h])-V_\alpha(E_h)
	\end{equation*}
	and then, we have
	\begin{align}\label{defest2}
	\begin{split}
	\frac{P_s(E_{h,*})-P_s(B)}{P_s(B)}&=\frac{P_s(E_{h})-P_s(B[m_h])}{P_s(B[m_h])}\\&\le \frac{V_\alpha(B[m_h])-V_\alpha(E_h)}{\varepsilon(m_h) P_s(B[m_h])}\\
	&\le C m_h^{\alpha-\beta-N}(V_\alpha(B)-V_\alpha(E_{h,*}))
	\end{split}
	\end{align}
	for some positive constant $C>0$.\\
	Now, by \cite[Lemma $5.3$]{figalli2015isoperimetry} we have
	\begin{equation*}
	V_\alpha(B)-V_\alpha(E_{h,*})\le C([[u_h]]_{\frac{1-\alpha}{2}}^2+\Norm{u_h}{L^2(S^{N-1})}^2).
\end{equation*}
	Let us observe that, for $s \in (0,1)$, it holds
	\begin{align*}
		[[u_h]]_{\frac{1-\alpha}{2}}^2&=\frac{1+\alpha}{2}\int_{S^{N-1}}\int_{S^{N-1}}\frac{|u(x)-u(y)|^2}{|x-y|^{N-\alpha}}d\cH^{N-1}(x)d\cH^{N-1}(y)\\&=\frac{1+\alpha}{2}\int_{S^{N-1}}\int_{S^{N-1}}|x-y|^{\alpha+s}\frac{|u(x)-u(y)|^2}{|x-y|^{N+s}}d\cH^{N-1}(x)d\cH^{N-1}(y)\\
		&\le \frac{1+\alpha}{1-s}2^{\alpha+s}[[u_h]]_{\frac{1+s}{2}}^2.
	\end{align*}
	For $s=1$ we have to use a different argument that we briefly discus here. Precisely, if we consider the operator $\widetilde{I}_\alpha$ such that for any $u \in C^{1}(S^{N-1})$ it holds
	\begin{equation*}
		\widetilde{I}_\alpha u(x)=2\int_{S^{N-1}}\frac{u(x)-u(y)}{|x-y|^{n-\alpha}}d\cH^{N-1}(y),
	\end{equation*}
	we can rewrite $$[[u]]_{\frac{1-\alpha}{2}}^2=\int_{S^{N-1}}u(x)\widetilde{I}_\alpha u(x)d\cH^{N-1}(x).$$
	Now fix $u_h$, denote by $\widetilde{\lambda}_{k,\alpha}$ the eigenvalues of $\widetilde{I}_\alpha$, such that for each $k \ge 0$ and $Y_k \in \cS_k$ it holds $\widetilde{I}_\alpha[Y_k](x)=\widetilde{\lambda}_{k,\alpha}Y_k$, and set, for each $\cS_k$, an orthonormal basis $\cY_k=\{Y_{k,j}\}_{j \le d(k)}$. We have
	\begin{equation*}
		u_h(x)=\sum_{k=0}^{+\infty}\sum_{j=1}^{d(k)}a_{h,k,j}Y_{k,j}(x)
	\end{equation*}
and then
	\begin{equation*}
		[[u_h]]_{\frac{1-\alpha}{2}}^2=\sum_{k=0}^{+\infty}\sum_{j=1}^{d(k)}\widetilde{\lambda}_{k,\alpha}a_{h,k,j}^2.
\end{equation*}
The explicit form of $\widetilde{\lambda}_{k,\alpha}$ is provided, for instance, in \cite[Equations $(7.4)$ to $(7.6)$]{figalli2015isoperimetry}. In particular, let us recall from \cite[Equation $(7.17)$]{figalli2015isoperimetry} that $\widetilde{\lambda}_{0,\alpha}=0$ and $\widetilde{\lambda}_{k+1,\alpha}>\widetilde{\lambda}_{k,\alpha}$. On the other hand, from \cite[Equation $(7.11)$]{figalli2015isoperimetry}, we have
\begin{equation*}
	[[u_h]]_{1}^2=\sum_{k=0}^{+\infty}\sum_{j=1}^{d(k)}\lambda^1_{k}a_{h,k,j}^2,
\end{equation*}
where $\lambda^1_{k}=k(k+N-2)$. In particular, it holds $\lim_{k \to +\infty}\frac{\lambda^1_{k}}{\widetilde{\lambda}_{k,\alpha}}=+\infty$ and $\frac{\lambda^1_{k}}{\widetilde{\lambda}_{k,\alpha}}>0$ for any $k \ge 1$, while $\lambda^1_{0}=\widetilde{\lambda}_{0,\alpha}=0$. Thus, there exists a constant $C>0$ depending only on $N$ and $\alpha$ such that $\widetilde{\lambda}_{k,\alpha}\le C\lambda^1_{k}$, leading to
\begin{equation*}
[[u_h]]_{\frac{1-\alpha}{2}}^2=\sum_{k=0}^{+\infty}\sum_{j=1}^{d(k)}\widetilde{\lambda}_{k,\alpha}a_{h,k,j}^2\le C\sum_{k=0}^{+\infty}\sum_{j=1}^{d(k)}\lambda^1_{k}a_{h,k,j}^2= C[[u_h]]_{1}^2.
\end{equation*}
In general, we conclude, for any $s \in (0,1]$, that
	\begin{equation*}
		V_\alpha(B)-V_\alpha(E_{h,*})\le C([[u_h]]_{\frac{1+s}{2}}^2+\Norm{u_h}{L^2(S^{N-1})}^2).
	\end{equation*}
	and then
	\begin{equation}\label{def1}
	\frac{P_s(E_{h,*})-P_s(B)}{P_s(B)}\le C m_h^{\alpha-\beta-N}([[u_h]]_{\frac{1+s}{2}}^2+\Norm{u_h}{L^2(S^{N-1})}^2).
	\end{equation}
	Thus, from \eqref{def2} and \eqref{def1} we get the following inequality
	\begin{equation*}
	0<C\le m_h^{\alpha-\beta-N}
	\end{equation*}
	where $C$ is a suitable constant. Sending $h \to +\infty$ we get a contradiction.\\
	Hence we know that there exists $m_1$ such that for any $m>m_1$ the minimizer must satisfy $\delta(E)=0$, i.e. it is equivalent to a ball.
\end{proof}
\section*{Acknowledgements}
The author would like to thank Prof. Nicola Fusco for his fundamental support. Moreover, he would like to thank Prof. Rupert Frank and the anonymous referee for their useful comments that really helped improving the paper.  
\bibliographystyle{abbrv}
\bibliography{biblio}
\end{document}